\newtheorem{theorem}{Theorem}[section]
\newtheorem*{theorem*}{Theorem}
\newtheorem{corollary}[theorem]{Corollary}
\newtheorem{lemma}[theorem]{Lemma}
\newtheorem{proposition}[theorem]{Proposition}
\newtheorem{definition-proposition}[theorem]{Definition-Proposition}
\newtheorem{problem}[theorem]{Problem}
\newtheorem{question}[theorem]{Question}
\theoremstyle{definition}
\newtheorem{definition}[theorem]{Definition}
\newtheorem{remark}[theorem]{Remark}
\newtheorem{example}[theorem]{Example}
\newcommand{\CC}{\mathcal{C}}
\newcommand{\CCC}{\mathsf{C}}
\newcommand{\DDD}{\mathsf{D}}
\newcommand{\KKK}{\mathsf{K}}
\newcommand{\TT}{\mathcal{T}}
\newcommand{\UU}{\mathcal{U}}
\newcommand{\Z}{\mathbb{Z}}
\newcommand{\Sbb}{\mathbb{S}}
\newcommand{\bo}{\operatorname{b}\nolimits}
\newcommand{\projdim}{\operatorname{projdim}\nolimits}
\newcommand{\gldim}{\operatorname{gldim}\nolimits}
\newcommand{\Ext}{\operatorname{Ext}\nolimits}
\newcommand{\Out}{\operatorname{Out}\nolimits}\newcommand{\Pic}{\operatorname{Pic}\nolimits}
\newcommand{\Hom}{\operatorname{Hom}\nolimits}
\newcommand{\rad}{\operatorname{rad}\nolimits}
\newcommand{\End}{\operatorname{End}\nolimits}
\newcommand{\Aut}{\operatorname{Aut}\nolimits}
\newcommand{\injdim}{\operatorname{injdim}\nolimits}
\newcommand{\op}{\operatorname{op}\nolimits}
\newcommand{\sg}{\operatorname{sg}\nolimits}
\newcommand{\RHom}{\mathbf{R}\strut\kern-.2em\operatorname{Hom}\nolimits}
\newcommand{\Lotimes}{\mathop{\stackrel{\mathbf{L}}{\otimes}}\nolimits}
\DeclareMathOperator{\moduleCategory}{\mathsf{mod}} \renewcommand{\mod}{\moduleCategory}
\DeclareMathOperator{\Mod}{\mathsf{Mod}}
\DeclareMathOperator{\proj}{\mathsf{proj}}
\DeclareMathOperator{\inj}{\mathsf{inj}}
\DeclareMathOperator{\ind}{\mathsf{ind}}
\newcommand{\Db}{\DDD^{\bo}}
\newcommand{\Kb}{\KKK^{\bo}}
\DeclareMathOperator{\per}{\mathsf{per}}
\DeclareMathOperator{\add}{\mathsf{add}}
\DeclareMathOperator{\lcm}{lcm}
\DeclareMathOperator{\CYdim}{CY-dim}
\newcommand{\stmod}{\mathop{\underline{\mod}}\nolimits}
\newcommand\cx{\mathop{\rm cx}\nolimits}
\newcommand{\cut}{\ar@{-}@[|(5)]}
\numberwithin{equation}{section}
\title[Periodic trivial extension algebras and fractionally Calabi--Yau algebras]{Periodic trivial extension algebras and\\ fractionally Calabi--Yau algebras}
\dedicatory{Dedicated to the memory of Andrzej Skowro\'{n}ski}
\author[Chan]{Aaron Chan}
\address[Chan]{Graduate School of Mathematics, Nagoya University, Furocho, Chikusaku, Nagoya 464-8602, Japan}
\author[Darp\"o]{Erik Darp\"o}
\address[Darp\"o]{Department of Mathematics, Link\"oping University, SE-58183 Link\"oping, Sweden}
\author[Iyama]{Osamu Iyama}
\address[Iyama]{Graduate School of Mathematical Sciences, University of Tokyo, 3-8-1 Komaba Meguro-ku, Tokyo 153-8914, Japan}
\author[Marczinzik]{Ren\'{e} Marczinzik}
\address[Marczinzik]{Institute of algebra and number theory, University of Stuttgart, Pfaffenwaldring 57, 70569 Stuttgart, Germany}
\begin{document}

\vspace{1em}

\begin{abstract}
We study periodicity and twisted periodicity of the trivial extension algebra $T(A)$ of a finite-dimensional algebra $A$. Our main results show that (twisted) periodicity of $T(A)$ is equivalent to $A$ being (twisted) fractionally Calabi--Yau of finite global dimension. We also extend this result to a large class of self-injective orbit algebras.
As a significant consequence, these results give a partial answer to the periodicity conjecture of Erdmann--Skowro\'nski, which expects the classes of periodic and twisted periodic algebras to coincide.
On the practical side, it allows us to construct a large number of new examples of periodic algebras and fractionally Calabi--Yau algebras. 
We also establish a connection between periodicity and cluster tilting theory, by showing that twisted periodicity of $T(A)$ is equivalent the $d$-representation-finiteness of the $r$-fold trivial extension algebra $T_r(A)$  for some $r,d\ge 1$. This answers a question by Darp\"o and Iyama.

As applications of our results, we give answers to some other open questions.
We construct periodic symmetric algebras of wild representation type with arbitrary large minimal period, answering a question by Skowro\'nski.
We also show that the class of twisted fractionally Calabi--Yau algebras is closed under derived equivalence, answering a question by Herschend and Iyama.
\end{abstract}

\maketitle 

\setcounter{tocdepth}{1}
\tableofcontents

\section{Introduction}

The syzygy functor $\Omega_A:\stmod A\to\stmod A$ \cite{AB} is a fundamental tool in the homological algebra of a Noetherian ring $A$.
A finite-dimensional $k$-algebra $A$ is said to be \emph{periodic} (of period $n$) if $\Omega_{A^{\mathrm{e}}}^n(A) \simeq A$ as $A^{\mathrm{e}}$-modules for some $n \geq 1$, where $A^{\mathrm{e}}=A \otimes_k A^{\op}$ is the enveloping algebra of $A$.
This implies the periodicity of the syzygy functor $\Omega_A^n$, and is often a more workable condition than the latter.
%
All periodic algebras are self-injective, and amongst self-injective algebras, the periodic ones constitute a fundamental subclass, with many important properties.
The following problem, raised in \cite[Problem~1]{ES}, is central to understanding self-injective algebras, and also highly significant in the theory of Hochschild cohomology and support varieties \cite{GSS,EH}.

\begin{problem} \label{periodicityquestion}
For a self-injective algebra $B$, when is $B$ periodic? 
\end{problem}

For example, preprojective algebras of Dynkin type are periodic. Also, the trivial extension algebra of the path algebra $kQ$ of an acyclic quiver $Q$ is periodic if and only if $Q$ is Dynkin \cite{BBK}. Periodic algebras appear also in group representation theory, topology and algebraic geometry, e.g., some contraction algebras are periodic \cite{DW}. We refer to \cite{ES} for a survey on periodic algebras, and to e.g.\ \cite{AS1,BES,Du2,Du3,ES2,ES3,JKM} for more recent contributions.

There are some natural variations of periodicity, see Figure~\ref{dgm:implications} in Section~\ref{sec:periodic main}. In particular, we call a finite-dimensional $k$-algebra $A$ \emph{twisted periodic} if $\Omega_{A^{\mathrm{e}}}^n(A) \simeq {}_1 A_\phi$ as $A^{\mathrm{e}}$-modules, for some $n\geq 1$ and $k$-algebra automorphism $\phi$ of $A$ (here, ${}_1A_\phi$ denotes the $A^{\mathrm{e}}$-module $A$ with right action twisted by $\phi$).
In the recent article \cite{ES2} the following important question is formulated as a conjecture, and given an affirmative answer for group algebras.

\begin{question}[Periodicity conjecture \cite{ES2}]\label{conj:periodicity}
Is every finite-dimensional twisted periodic algebra periodic? 
\end{question}

One of the most fundamental classes of self-injective algebras is given by the \emph{trivial extension algebra} $T(A)$ of a finite-dimensional $k$-algebra $A$.
Recall that $T(A) = A \oplus \Hom_k(A,k)$ with multiplication given by $(a,f)(b,g) = (ab, ag+fb)$. Trivial extension algebras,
together with their dg (=differential graded) analogues, have played important roles in the representation theory of algebras. Examples include the classification of ($d$-)representation-finite and tame self-injective (dg) algebras \cite{T,HW,R,Sk,SY1,DI,J}, the study of derived categories \cite{Hap} and cluster categories \cite{K1,Am}, gentle algebras and Brauer graph algebras \cite{AS2,Sc}.
They also appear in other fields, such as symplectic and contact geometry \cite{KS,EL}.

The purpose of this paper is to study periodicity and twisted periodicity of the trivial extension algebra $T(A)$, including Problem~\ref{periodicityquestion} and Question~\ref{conj:periodicity}, and relate it to homological properties of the algebra $A$.
We will give a complete solution to Problem~\ref{periodicityquestion} in this vein for $B=T(A)$. As an application, we give a large number of new examples of periodic algebras, including many of wild representation type.
In Section~\ref{section:selfinjective}, we extend our solution  to a large class of self-injective algebras given by the repetitive category \ \cite{Sk,SY1,SY3,ES}.

To explain our solution to Problem~\ref{periodicityquestion}, recall that the bounded derived category $\Db(\mod A)$ of a finite-dimensional algebra $A$ of finite global dimension has a Serre functor $\nu$, see \eqref{define nu} below. Such an algebra $A$ is said to be \emph{fractionally Calabi--Yau} (also \emph{$\frac{m}{\ell}$-Calabi--Yau}) if there exist integers $\ell>0$ and $m$ such that $\nu^\ell$ and $[m]$ are isomorphic as functors on $\Db(\mod A)$ \cite[\S18.6]{Ye}.
For example, the path algebra of a Dynkin quiver is $\frac{h-2}{h}$-Calabi--Yau, where $h$ is the Coxeter number \cite{MY} -- see Theorem~\ref{Dynkin is CY} for a more precise statement.
There is also a weaker notion of \emph{twisted} fractionally Calabi--Yau, in which the defining isomorphism of functors is taken up to a twist by an algebra automorphism.
There are many important examples of (twisted) fractionally Calabi--Yau algebras, in representation theory \cite{Gra,HI,Lad,R,Y} as well as in algebraic geometry \cite{GL,KLM,FK,K,HIMO}. They play important roles in various areas, e.g., integrable systems \cite{K3,IIKNS}, Hochschild cohomology \cite{Per} and mathematical physics \cite{CaCe}.

Our first main result gives a solution to Problem~\ref{periodicityquestion} for trivial extension algebras.

\begin{theorem}[Corollaries~\ref{finite out} and \ref{application to orbit algebra 2}]\label{main theorem}
Let $A$ be a finite-dimensional algebra over a field $k$ such that $A/\rad A$ is a separable $k$-algebra (e.g., when $k$ is perfect).
Then the following conditions are equivalent.
\begin{enumerate}[\rm(i)]
\item $T(A)$ is periodic.
\item $A$ has finite global dimension and is fractionally Calabi--Yau.
\end{enumerate}
Moreover, let $G$ be an admissible group of automorphisms of the repetitive category $\widehat{A}$ (see Section \ref{subsec:trivext}) containing $\nu_{\widehat{A}}^\ell$ for some $\ell\ge1$. Then the following condition is equivalent to (i) and (ii).
\begin{enumerate}[\rm(i)]
\item[\rm(iii)] $\widehat{A}/G$ is periodic.
\end{enumerate}
\end{theorem}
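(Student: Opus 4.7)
The plan is to reduce the statement to an analogous \emph{twisted} version--namely, that $T(A)$ is twisted periodic iff $A$ is twisted fractionally Calabi--Yau of finite global dimension--which I expect the paper to have established earlier via Happel's derived equivalence $\Db(\mod A)\simeq\stmod\widehat{A}$ (valid under the separability hypothesis on $A/\rad A$). The task then splits into two subproblems: (a) controlling the twisting automorphism so as to upgrade ``twisted'' to ``untwisted'' statements, and (b) extending from the trivial extension $T(A)=\widehat{A}/\langle\nu_{\widehat{A}}\rangle$ to a general admissible orbit algebra $\widehat{A}/G$.

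For the equivalence (i)$\Leftrightarrow$(ii), note that (i) trivially implies twisted periodicity of $T(A)$, so by the twisted version $A$ is twisted fractionally Calabi--Yau of finite global dimension: there is an automorphism $\phi$ of $A$ and integers $\ell,m$ with $\nu^\ell\simeq [m]\circ(-\otimes_A{}_1A_\phi)$ on $\Db(\mod A)$. I would then argue that the class of $\phi$ in $\Out(A)$ has finite order, using that the finite period of $T(A)$ forces the Nakayama-twist action by $\phi$ on $\stmod\widehat{A}$ to satisfy a finite relation with the shift. Once $[\phi]$ has finite order, replacing $\ell$ by a suitable multiple turns the twist into an inner automorphism, which acts trivially up to natural isomorphism on the derived category; this yields the untwisted fractionally Calabi--Yau property. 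The converse (ii)$\Rightarrow$(i) is the easier direction: via Happel, $\nu^\ell\simeq[m]$ translates into $\nu_{\widehat{A}}^\ell\simeq[m]$ as autoequivalences of $\stmod\widehat{A}$, whence a standard argument produces a periodic bimodule resolution of $T(A)$.

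For the equivalence with (iii), observe that since $G\supseteq\langle\nu_{\widehat{A}}^\ell\rangle$, there is a Galois covering $T_\ell(A)=\widehat{A}/\langle\nu_{\widehat{A}}^\ell\rangle\twoheadrightarrow\widehat{A}/G$ with finite Galois group $G/\langle\nu_{\widehat{A}}^\ell\rangle$. Standard covering-theoretic results for self-injective algebras propagate periodicity in both directions across such admissible finite coverings, so $\widehat{A}/G$ is periodic iff $T_\ell(A)$ is. Finally, $T_\ell(A)$ being periodic is equivalent to $T(A)$ being periodic, either by applying the (i)$\Leftrightarrow$(ii) equivalence to each of them (since they share the underlying algebra $A$), or by a direct covering argument via the induced $\Z/\ell\Z$-action.

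The principal obstacle is the finite-order claim for $[\phi]\in\Out(A)$ in the step (i)$\Rightarrow$(ii); this should be the content of the ``finite out'' corollary named in the theorem statement. The separability hypothesis on $A/\rad A$ is expected to enter precisely here, giving enough rigidity for inner automorphisms of $\widehat{A}$ relative to its Nakayama automorphism to force the non-torsion part of $[\phi]$ to be incompatible with any finite periodicity of $T(A)$.
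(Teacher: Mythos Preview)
You have the difficulty of the two directions reversed, and the gap lies in the direction you call ``easier''.

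\medskip
\textbf{(i)$\Rightarrow$(ii) is the easy direction, and needs no twist-removal.} Bimodule periodicity $\Omega_{T(A)^e}^n(T(A))\simeq T(A)$ lifts to graded bimodule periodicity $\Omega_{T(A)^e}^n(T(A))\simeq T(A)(a)$ (Proposition~\ref{gr=ungr}\eqref{gr=ungr2}). Tensoring by this bimodule isomorphism gives the \emph{untwisted} functorial identity $\Omega^n\simeq(a)$ on $\stmod^{\Z}T(A)$, and via Happel's equivalence and the Serre-functor compatibility \eqref{serre} this reads $\nu^{-a}\simeq[a-n]$ on $\Db(\mod A)$. That is Proposition~\ref{periodic to fCY}. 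No automorphism $\phi$ ever appears, so there is nothing to kill in $\Out(A)$.

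\medskip
\textbf{(ii)$\Rightarrow$(i) is the hard direction, and your ``standard argument'' does not exist.} From $\nu^\ell\simeq[m]$, Happel's equivalence indeed yields $\Omega^{\ell+m}\simeq(-\ell)$ as autoequivalences of $\stmod^{\Z}T(A)$. But the passage from \emph{functorial} periodicity to \emph{bimodule} periodicity (Proposition~\ref{thm:hani-gr}) only produces
\[
\Omega_{T(A)^e}^{\ell+m}(T(A))\simeq {}_1T(A)_\psi(-\ell)
\]
for some a~priori unknown $\psi\in\Aut_k^{\Z}(T(A))$. Controlling this $\psi$ is precisely the content of Sections~\ref{section 4}--\ref{sec:fCY to per}: the paper builds a dg model $C=A\oplus P$ quasi-isomorphic to $T(A)$, uses its relative bar resolution to identify $\psi$ explicitly as $(a,f)\mapsto(a,(-1)^{\ell+m}f)$ (Proposition~\ref{trivial extension is periodic: first part} combined with Theorem~\ref{trivial extension is periodic: second part}), and hence shows $\psi^2=1$. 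This is a substantial calculation, not a standard argument; indeed, if it were routine, the Periodicity Conjecture (Question~\ref{conj:periodicity}) would not be open.

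\medskip
You have also misread the label ``finite out'': Corollary~\ref{finite out}\eqref{finite out2} does not prove that the twist has finite order in $\Out(A)$; it records the conditional statement that \emph{if} $\Out(A)$ happens to be finite, \emph{then} twisted periodic implies periodic. That hypothesis is not available here.

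\medskip
Your treatment of (iii) via Galois covering through $T_\ell(A)$ is correct and matches Proposition~\ref{application to orbit algebra}(b). One small correction: you cannot apply (i)$\Leftrightarrow$(ii) to $T_\ell(A)$ directly, since $T_\ell(A)$ is not in general the trivial extension of any algebra; the paper instead uses that $T_\ell(A)/(\Z/\ell\Z)=T(A)$ and invokes Proposition~\ref{periodicity and covering} again.
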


This gives a large number of new periodic algebras, see Section~\ref{sec:eg}. As a consequence, we get a conceptual proof of the periodicity of the trivial extension algebras of the path algebras of Dynkin quivers mentioned above, see Example~\ref{T(kQ) is periodic}.
The trickiest part of the proof of Theorem \ref{main theorem} is the implication (ii)$\Rightarrow$(i), which will be shown in Section \ref{sec:fCY to per} by using the relative bar resolution of a certain differential graded algebra quasi-isomorphic to $T(A)$.

The classification of periodic symmetric algebras of tame representation type is a well studied subject, e.g.\ \cite{ES3,ES4,ES5}. According to an Oberwolfach talk in January 2020 by Skowro\'{n}ski \cite{S}, there is no known example of a family of wild symmetric algebras with unbounded minimal periods.
As an application of our results, we can construct many such examples.
For example, the trivial extension $T(A)$ of the incidence algebra $A$ of the Boolean lattice with $2^n$ elements has minimal period $3+n$ if $n$ is odd or the characteristic of $k$ is two, and $2(n+3)$ otherwise (Corollary~\ref{periodoftp}). Here $T(A)$ is indeed wild for $n \geq 4$.

\medskip
In our second main result, we give several characterisations of \emph{twisted} periodicity for $T(A)$.
Recall that, for a positive integer $d$, a finite-dimensional algebra $A$ is said to be \emph{$d$-representation-finite} if there exists a $d$-cluster-tilting $A$-module. 
For algebras of finite global dimension, this is closely related to the notion of twisted fractionally Calabi--Yau \cite{HI} and, for self-injective algebras, to periodicity \cite{EH}. Using results from \cite{DI}, we characterize twisted periodicity of $T(A)$ via $d$-representation-finiteness of the \emph{$r$-fold} trivial extension algebra $T_r(A)$ (see Section~\ref{subsec:trivext}).
Our second main result can be summarized as follows. 

\begin{theorem}[Theorem~\ref{main theorem 2 again}, Corollary~\ref{application to orbit algebra 2}]\label{main theorem 2}
Let $A$ be a finite-dimensional algebra over a field $k$ such that $A/\rad A$ is a separable $k$-algebra. The following conditions are equivalent.
\begin{enumerate}[\rm(i)] 
\item $T(A)$ is twisted periodic.
\item Each $T(A)$-module has complexity at most one.
\item There exist $d,r\ge1$ such that $T_r(A)$ is $d$-representation-finite.
\item $A$ has finite global dimension and is twisted fractionally Calabi--Yau.
\end{enumerate}
Moreover, let $G$ be an admissible group of automorphisms of $\widehat{A}$. Then the following conditions are equivalent to (i)-(iv).
\begin{enumerate}[\rm(v)] 
\item $\widehat{A}/G$ is twisted periodic.
\item[\rm(vi)] Each $\widehat{A}/G$-module has complexity at most one.
\end{enumerate}
\end{theorem}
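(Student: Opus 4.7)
The plan is to establish the equivalences in two layers: first the trivial-extension case (i)–(iv), then the orbit-algebra case (v)–(vi), with the admissibility hypothesis providing the bridge.

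For the equivalence (i) $\Leftrightarrow$ (ii), I would use a standard complexity argument. Tensoring a twisted periodic bimodule resolution of $T(A)$ over $T(A)$ with any module $M$ yields a projective resolution of $M$ whose terms grow linearly at most once (in fact they are eventually periodic up to a twist), so $\cx M \le 1$. The converse uses that $T(A)^e$ is self-injective: if every $T(A)$-module has complexity at most one, then in particular the simple modules do, which forces the minimal projective bimodule resolution of $T(A)$ to be eventually constant in rank, and a standard Krull--Schmidt argument in the stable bimodule category then yields $\Omega^n T(A) \simeq {}_1 T(A)_\phi$ for some $n \ge 1$ and some $\phi \in \Aut(T(A))$.

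The heart of the argument is (i) $\Leftrightarrow$ (iv), which is the twisted refinement of Theorem~\ref{main theorem}. For (iv) $\Rightarrow$ (i), I would reduce to the untwisted case by a covering argument: if $A$ is twisted fractionally Calabi--Yau, then combining the twist with an appropriate power of $\nu_{\widehat{A}}$ produces an admissible group $G$ such that $\widehat{A}/G$ is periodic (honest, not twisted) by the orbit-algebra part of Theorem~\ref{main theorem}, and twisted periodicity of $T(A)$ descends from periodicity of $\widehat{A}/G$ because $T(A) \simeq \widehat{A}/\langle \nu_{\widehat{A}} \rangle$ sits as a further quotient. Conversely, for (i) $\Rightarrow$ (iv), twisted periodicity of $T(A)$ gives an $r \ge 1$ such that $\Omega^{nr} T(A) \simeq T(A)$ (by passing to a power that kills $\phi$ modulo inner automorphisms, which is finite-order thanks to the separability hypothesis on $A/\rad A$, exactly as in the input to Corollary~\ref{finite out}), and this honest periodicity of $T_r(A)$, together with Theorem~\ref{main theorem} applied to $T_r(A)$, yields that $A$ has finite global dimension and is twisted fractionally Calabi--Yau.

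The equivalence (iii) $\Leftrightarrow$ (iv) is to be obtained by invoking the results from \cite{DI}: $T_r(A)$ being $d$-representation-finite for some $r,d$ is characterised exactly by $A$ being twisted fractionally Calabi--Yau of finite global dimension, with numerical parameters matching $r,d$. In one direction, $d$-representation-finiteness of $T_r(A)$ forces $T_r(A)$ to be periodic (hence $T(A)$ twisted periodic) and gives the twisted fractional Calabi--Yau property via Happel's derived equivalence; in the other, one picks $r$ compatible with the denominator of the Calabi--Yau fraction so that a $d$-cluster-tilting module on $T_r(A)$ can be constructed from the cluster-tilting theory in $\Db(\mod A)$.

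The orbit-algebra statements (v) and (vi) and their equivalence with (i)–(iv) follow from admissibility of $G$: the algebra $\widehat{A}/G$ is a finite Galois-type quotient sitting between $\widehat{A}$ and $T(A) = \widehat{A}/\langle \nu_{\widehat{A}}\rangle$, and both twisted periodicity and the bound $\cx M \le 1$ are preserved under such admissible quotients and lifts. The argument for (v) $\Leftrightarrow$ (vi) is entirely parallel to (i) $\Leftrightarrow$ (ii), applied with $\widehat{A}/G$ in place of $T(A)$. The main obstacle I anticipate is the descent step in (i) $\Rightarrow$ (iv): controlling the twist automorphism $\phi$ modulo inner automorphisms well enough to extract a genuine power of the Serre functor $\nu$ on $\Db(\mod A)$, which is precisely where the separability of $A/\rad A$ enters to make $\Out(T(A))$ sufficiently tame.
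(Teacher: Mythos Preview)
Your proposal has a genuine gap in the implication (i)$\Rightarrow$(iv). You write that twisted periodicity of $T(A)$ gives some $r\ge1$ with $\Omega^{nr}T(A)\simeq T(A)$ ``by passing to a power that kills $\phi$ modulo inner automorphisms, which is finite-order thanks to the separability hypothesis''. But separability of $A/\rad A$ does \emph{not} force the twist $\phi$ to have finite order in $\Out_k(T(A))$; whether this always happens is precisely the periodicity conjecture (Question~\ref{conj:periodicity}), and the paper explicitly treats it as open. Corollary~\ref{finite out} only gets this under the extra hypothesis that $\Out_k(A)$ is finite. So your route (i)$\Rightarrow$(iv) collapses to assuming what you want to prove.

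The paper circumvents this entirely: it never goes (i)$\Rightarrow$(iv) directly. Instead it passes through complexity, proving (ii)$\Rightarrow$(iv) via Proposition~\ref{complexity one}. That argument has real content you have not supplied: first a finitistic-dimension bound (Proposition~\ref{FDC}) is used to show $\gldim A<\infty$; then one observes that $A$, viewed in $\mod^{\Z/2\Z}T(A)\simeq\mod T_2(A)$, is rigid (i.e.\ $\Ext^1$-self-orthogonal), so Voigt's lemma (Proposition~\ref{voigt}) together with the complexity bound forces $\Omega^n_{T(A)}(A)\simeq A$ in $\mod^{\Z/2\Z}T(A)$ for some $n$, and Happel's equivalence then translates this into the twisted fractional Calabi--Yau property. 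This rigidity-plus-Voigt step is the crux, and nothing like it appears in your plan.

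Two smaller points. For (iv)$\Rightarrow$(i) you invoke Theorem~\ref{main theorem} (the untwisted result); but in the paper's logical order that is proved \emph{after} Theorem~\ref{main theorem 2}, and its hard direction (Section~\ref{sec:fCY to per}) is considerably deeper. The paper instead gets (iv)$\Rightarrow$(i) cheaply via Proposition~\ref{tw-fCY to tw-pe}: Happel's equivalence turns $\nu^\ell\simeq[m]\circ\psi^*$ into graded simple periodicity of $T(A)$, and then Proposition~\ref{thm:hani-gr} and Proposition~\ref{gr=ungr} finish. Finally, for (v)--(vi) your claim that $\widehat{A}/G$ ``sits between $\widehat{A}$ and $T(A)$'' is not correct for a general admissible $G$ (which need not contain any power of $\nu_{\widehat{A}}$); the paper instead compares both $T(A)$ and $\widehat{A}/G$ to $\widehat{A}$ via push-down functors, which preserve simple modules and minimal projective resolutions (Proposition~\ref{application to orbit algebra}).
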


Together with Theorem~\ref{main theorem}, this establishes the following diagram of implications.
\begin{align*}
    \vcenter{
\xymatrix@C=60pt{
{\phantom{abcd}A:\text{ fractionally CY}\phantom{graddd}} \ar@{=>}[d]^{\text{trivial}\phantom{abcd}} \ar@{<=>}[r]^(.55){\text{Thm }\ref{main theorem}} & {\phantom{abcd}T(A):\text{ periodic}\phantom{grad}} \ar@{=>}[d]^{\text{trivial}}  \\
{\quad\;\; A:\text{ twisted fractionally CY}\qquad}  \ar@{<=>}[r]^(.55){\text{Thm }\ref{main theorem 2}}& {\phantom{ged}T(A):\text{ twisted periodic}}
}}
\end{align*}
We remark that the implication (iii)$\Rightarrow$(iv) in Theorem~\ref{main theorem 2} gives a positive answer to \cite[Question 6.1(2)]{DI}.

\medskip
We apply our Theorems~\ref{main theorem} and ~\ref{main theorem 2} to study the periodicity conjecture (Question~\ref{conj:periodicity}) for the trivial extension algebra $T(A)$ (and orbit algebras $\widehat{A}/G$ more generally) in terms of the much simpler algebra $A$. In particular, we get the following result.

\begin{corollary}[Corollaries \ref{finite out}, \ref{application to orbit algebra 2}]\label{finite out again}
Let $A$ be a finite-dimensional algebra over a field $k$ such that $A/\rad A$ is a separable $k$-algebra.
Let $B = T(A)$ or, more generally, $B=\widehat{A}/G$ for an admissible group $G$ of automorphisms of $\widehat{A}$ containing $\nu_{\widehat{A}}^{\ell}$ for some $\ell\ge1$. If the outer automorphism group of $A$ is finite, then $B$ is periodic if and only if it is twisted periodic.
\end{corollary}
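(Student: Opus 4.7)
The plan is to deduce this directly from Theorems~\ref{main theorem} and~\ref{main theorem 2} together with one simple observation about Serre functors and outer automorphisms. The implication ``$B$ periodic $\Rightarrow$ $B$ twisted periodic'' is immediate from the definitions and requires no assumption on $\Out(A)$. For the converse, Theorem~\ref{main theorem 2} (equivalence (i)$\Leftrightarrow$(iv), together with its orbit-algebra extension) tells us that twisted periodicity of $B$ is equivalent to $A$ having finite global dimension and being twisted fractionally Calabi--Yau; that is, there exist $\ell\ge1$, $m\in\Z$, and $\phi\in\Aut(A)$ such that
\[
\nu^\ell \;\cong\; [m]\circ F_\phi
\]
as autoequivalences of $\Db(\mod A)$, where $F_\phi := -\Lotimes_A {}_1A_\phi$. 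By Theorem~\ref{main theorem}, the conclusion that $B$ is periodic will follow as soon as we upgrade this to an honest fractionally Calabi--Yau isomorphism $\nu^{\ell'}\cong[m']$; the orbit-algebra version of Theorem~\ref{main theorem} applies because $G$ is assumed to contain $\nu_{\widehat{A}}^\ell$.

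The key observation is that $\phi\mapsto F_\phi$ descends to a group homomorphism $\Out(A)\to\Aut(\Db(\mod A))$: any inner automorphism $\psi$ of $A$ satisfies ${}_1A_\psi\cong A$ as $A$-bimodules, so $F_\psi\cong\mathrm{id}$; moreover $F_\phi\circ F_{\phi'}\cong F_{\phi\phi'}$. Setting $N:=|\Out(A)|<\infty$, the automorphism $\phi^N$ is inner and therefore $F_\phi^N\cong F_{\phi^N}\cong\mathrm{id}$. Since the Serre functor is characterised up to natural isomorphism by Serre duality, it commutes with every autoequivalence of $\Db(\mod A)$; in particular with $F_\phi$. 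Iterating the twisted Calabi--Yau isomorphism $N$ times therefore yields
\[
\nu^{\ell N} \;\cong\; ([m]\circ F_\phi)^N \;\cong\; [mN]\circ F_\phi^N \;\cong\; [mN],
\]
proving that $A$ is fractionally Calabi--Yau, and Theorem~\ref{main theorem} concludes the proof.

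I do not expect any substantial obstacle: once Theorems~\ref{main theorem} and~\ref{main theorem 2} are available, the argument is purely formal. The only points requiring care are the bookkeeping of conventions for the twisted bimodule ${}_1A_\phi$ (so that $F_\phi\circ F_{\phi'}\cong F_{\phi\phi'}$ comes out with the correct composition order) and the standard facts that inner automorphisms act trivially on $\Db(\mod A)$ and that $\nu$ commutes with every autoequivalence.
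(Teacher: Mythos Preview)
Your argument is correct and follows the same route as the paper: reduce via Theorems~\ref{main theorem} and~\ref{main theorem 2} to showing that a twisted fractionally Calabi--Yau algebra with finite $\Out(A)$ is fractionally Calabi--Yau, then kill the twist by raising to the order of $\phi$ in $\Out(A)$. One harmless redundancy: for the iteration $([m]\circ F_\phi)^N\cong[mN]\circ F_\phi^N$ you only need that $F_\phi$ commutes with the shift, not that $\nu$ commutes with $F_\phi$.
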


This result implies that the periodicity conjecture is true for the trivial extension algebra of the incidence algebra of any finite bounded poset (Theorem \ref{periodicity conjecture for poset}).
More generally, our results reduce the periodicity conjecture for trivial extensions to the following general question for algebras of finite global dimension, posed in \cite{HI}.

\begin{question}\cite{HI}\label{remove twist 2}
Let $A$ be a finite-dimensional $k$-algebra of finite global dimension that is twisted fractionally Calabi--Yau. Is $A$ fractionally Calabi--Yau?
\end{question}

In fact, the periodicity conjecture for trivial extension algebras is equivalent to Question~\ref{remove twist 2}, which ought to be more accessible in most cases.

\begin{corollary}
Let $k$ be a perfect field. Then Question~\ref{remove twist 2} has an affirmative answer if and only if the periodicity conjecture holds for all trivial extension algebras of finite-dimensional $k$-algebras of finite global dimension.
\end{corollary}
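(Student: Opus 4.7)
The plan is to deduce the corollary essentially formally from Theorems~\ref{main theorem} and \ref{main theorem 2}. Since $k$ is perfect, for any finite-dimensional $k$-algebra $A$ the quotient $A/\rad A$ is a separable $k$-algebra, so both theorems apply to every such $A$. This lets us translate the two statements about algebras of finite global dimension entirely into statements about trivial extension algebras, and vice versa.

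For the direction from Question~\ref{remove twist 2} to the periodicity conjecture for trivial extensions, I would take any finite-dimensional $k$-algebra $A$ of finite global dimension such that $T(A)$ is twisted periodic. By Theorem~\ref{main theorem 2}, this is equivalent to $A$ being twisted fractionally Calabi--Yau (with finite global dimension). Assuming Question~\ref{remove twist 2} has an affirmative answer, $A$ is therefore fractionally Calabi--Yau, and then Theorem~\ref{main theorem} yields that $T(A)$ is periodic. This is the content of the periodicity conjecture for $T(A)$.

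Conversely, assume the periodicity conjecture holds for all trivial extension algebras of finite-dimensional $k$-algebras of finite global dimension. Let $A$ be such an algebra and suppose $A$ is twisted fractionally Calabi--Yau. By Theorem~\ref{main theorem 2}, $T(A)$ is twisted periodic, so by hypothesis $T(A)$ is periodic, and then by Theorem~\ref{main theorem} $A$ is fractionally Calabi--Yau. This establishes the remaining direction.

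There is no substantial obstacle: the whole argument is a straightforward diagram chase through the two main theorems, and the role of the perfectness hypothesis is solely to guarantee that their separability condition on $A/\rad A$ is automatic. The two theorems themselves carry the entire conceptual burden.
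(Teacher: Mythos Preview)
Your proof is correct and is exactly the argument the paper has in mind: the corollary is stated in the introduction without an explicit proof precisely because it follows by this direct back-and-forth through Theorems~\ref{main theorem} and \ref{main theorem 2}, with the perfectness hypothesis serving only to ensure separability of $A/\rad A$.
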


Another application of Theorem~\ref{main theorem 2} is the following result, which gives a positive answer to a question posed in \cite[Remark 1.6(c)]{HI}.

\begin{corollary}[Corollary \ref{derived closed}]
Let $k$ be a perfect field. Then the class of twisted fractionally Calabi--Yau $k$-algebras of finite global dimension is closed under derived equivalence.
\end{corollary}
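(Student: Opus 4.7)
The plan is to use Theorem~\ref{main theorem 2} to reduce the claim to the derived invariance of twisted periodicity for self-injective algebras, via the trivial extension construction. Given derived equivalent finite-dimensional $k$-algebras $A$ and $B$ with $A$ twisted fractionally Calabi--Yau of finite global dimension, I first observe that finite global dimension is itself a derived invariant. This is standard: $\gldim A<\infty$ is equivalent to $\per A=\Db(\mod A)$, and both subcategories of the bounded derived category are preserved under a triangle equivalence $\Db(\mod A)\simeq\Db(\mod B)$, so $B$ also has finite global dimension.

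Second, I would show that $T(A)$ and $T(B)$ are derived equivalent. This is an application of Rickard's theorem on trivial extensions: a standard derived equivalence $\Db(\mod A)\simeq\Db(\mod B)$ lifts canonically to an equivalence $\Db(\mod T(A))\simeq\Db(\mod T(B))$ provided both algebras have finite global dimension. Since $k$ is perfect, any derived equivalence between finite-dimensional $k$-algebras can be realised by a two-sided tilting complex, so this lifting applies unconditionally in our setting.

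Third, by Theorem~\ref{main theorem 2}, the condition that $A$ is twisted fractionally Calabi--Yau of finite global dimension is equivalent to $T(A)$ being twisted periodic. I would then argue that this property of a self-injective algebra is a derived invariant, most cleanly via characterisation~(ii) of Theorem~\ref{main theorem 2}: every $T(A)$-module has complexity at most one. The complexity of a module over a self-injective algebra is encoded in the stable module category $\stmod T(A)$, and for self-injective algebras the stable module category is a derived invariant (by Rickard's theorem, equivalently via the Buchweitz identification with the singularity category). Hence every $T(B)$-module has complexity at most one as well, and applying Theorem~\ref{main theorem 2} in the reverse direction to $B$ yields that $B$ is twisted fractionally Calabi--Yau of finite global dimension.

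The main obstacle I expect is the second step---the derived equivalence of trivial extensions. Although well known among experts, the cleanest statement requires the finite global dimension hypothesis on both sides as well as the standardness of the equivalence (which in turn uses perfectness of $k$); carefully threading these conditions together, and checking that the lifted equivalence is indeed triangulated, is where the most care is needed. Alternatively, one could bypass the derived invariance argument for twisted periodicity by instead transporting characterisation (iii) of Theorem~\ref{main theorem 2}: the $d$-representation-finiteness of $T_r(A)$ is a derived invariant provided $T_r(A)$ and $T_r(B)$ are shown to be derived equivalent, which follows by iterating the Rickard lifting.
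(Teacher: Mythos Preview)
Your proposal is correct and follows essentially the same route as the paper: translate twisted fractional Calabi--Yau to the complexity-at-most-one condition on $T(A)$ via Theorem~\ref{main theorem 2}, use Rickard's results to obtain a stable equivalence $\stmod T(A)\simeq\stmod T(B)$, transport the complexity bound, and apply the theorem in reverse. The paper invokes \cite[Theorem~4.5]{Pu} for the preservation of complexity under stable equivalence, which is exactly the content of your third step; your additional observation that finite global dimension is itself derived invariant is a necessary ingredient for the introduction's phrasing of the statement, and is implicitly assumed in the paper's internal version. Your caution about step two is largely unnecessary: Rickard's lifting of a derived equivalence $\Db(\mod A)\simeq\Db(\mod B)$ to $\Db(\mod T(A))\simeq\Db(\mod T(B))$ (and thence to a stable equivalence, since trivial extensions are self-injective) holds for arbitrary finite-dimensional algebras over a field, without requiring finite global dimension or standardness hypotheses.
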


Our study motivated us to summarize in Section \ref{sec:eg} various examples of fractionally Calabi--Yau algebras (of finite global dimension) from the literature.  Along the way, we give some new constructions of new fractionally Calabi--Yau algebras. One of them is by simply taking tensor products of fractionally Calabi--Yau algebras (Proposition \ref{prop:cydim}).  Another one reveals yet another connection to cluster tilting theory.

\begin{theorem}[Theorem~\ref{stable Auslander is CY}]
Let $A$ be a $d$-representation-finite algebra with $\gldim A\le d$, $M$ its unique basic $d$-cluster-tilting $A$-module, and $E:=\underline{\End}_A(M)$ the stable $d$-Auslander algebra.
Then the algebra $E$ is twisted fractionally Calabi--Yau, and so $T(E)$ is twisted periodic.  Moreover, if $E$ is, in addition, (untwisted) fractionally Calabi--Yau, then $T(E)$ is periodic.
\end{theorem}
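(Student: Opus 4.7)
The plan is to apply Theorems~\ref{main theorem 2} and~\ref{main theorem} to the algebra $E$; concretely, one must establish that (1) $E$ has finite global dimension, and (2) $E$ is twisted fractionally Calabi--Yau (respectively, fractionally Calabi--Yau, for the moreover statement).

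For (1), let $\Lambda = \End_A(M)$ denote the $d$-Auslander algebra, and let $e \in \Lambda$ be the idempotent corresponding to the direct summand $A$ of $M$, so that $E \cong \Lambda/\Lambda e\Lambda$. Higher Auslander--Reiten theory (in the form developed by Iyama) gives $\gldim\Lambda \le d+1$ under the assumption $\gldim A \le d$, together with $A \cong e\Lambda e$. I would then deduce $\gldim E < \infty$ either via the recollement data attached to the idempotent $e$, or via a direct projective-resolution argument over $\Lambda$, exploiting the interplay of the finite global dimensions of $A$ and $\Lambda$.

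For (2), the key input is that the $d$-Auslander--Reiten translate $\tau_d = \tau\Omega^{d-1}$ permutes the isomorphism classes of non-projective indecomposable summands of $M$ (compatibly with the Nakayama functor), and this set is finite since $A$ is $d$-representation-finite. Equivalently, the Serre functor $\nu$ of $\Db(\mod A)$ stabilises the $d$-cluster tilting subcategory $\add M \subset \Db(\mod A)$ and acts on its indecomposables with finite orbits. The goal is then to transport this periodic structure to $\Db(\mod E)$, yielding a functor isomorphism $\nu_E^\ell \cong [m]$ after twisting by some algebra automorphism of $E$, which is precisely the twisted fractional Calabi--Yau property.

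The main obstacle is the transport in step (2): rigorously translating the periodic action of $\nu$ on $\add M \subset \Db(\mod A)$ to the Serre functor on $\Db(\mod E)$, while keeping careful track of the twist by the algebra automorphism induced by the passage from $\Lambda$ to its quotient $E$. Once (1) and (2) are in place, Theorem~\ref{main theorem 2} immediately gives that $T(E)$ is twisted periodic, and the moreover assertion follows from Theorem~\ref{main theorem} under the additional hypothesis that $E$ is (untwisted) fractionally Calabi--Yau.
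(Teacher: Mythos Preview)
Your high-level strategy---reduce to showing $\gldim E<\infty$ and $E$ twisted fractionally Calabi--Yau, then invoke Theorems~\ref{main theorem 2} and~\ref{main theorem}---matches the paper's. The gap is in your step~(2): you have the right intuition (periodicity of the Serre/AR-translate action) but not the transport mechanism, and the object you propose to work with, $\add M\subset\Db(\mod A)$, is not the one that carries the argument.

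The paper does not work with $\add M$ at all. Instead it passes to the $d\Z$-cluster-tilting subcategory
\[
\UU=\add\{\nu_d^i(A)\mid i\in\Z\}\subset\Db(\mod A),
\]
which is closed under $[d]$ and under $\nu$. The key structural input is \cite[Theorem~4.7]{IO}, giving an equivalence $\UU\simeq\proj^{\Z}T(E)$; combined with Happel's equivalence~\eqref{happel} this yields triangle equivalences $\Db(\mod E)\simeq\stmod^{\Z}T(E)\simeq\stmod\UU$. (This also disposes of your step~(1): finiteness of $\gldim E$ is built into this package, so the recollement argument you sketch is unnecessary.) The transport of the Calabi--Yau property is then carried out by Proposition~\ref{mod CY is CY}, which computes the Serre functor and suspension of $\stmod\UU$ in terms of those of $\Db(\mod A)$: one has ${}_{\stmod\UU}\Sbb={}_\TT\Sbb_*\circ{}_{\stmod\UU}\Sigma^{-1}$ and ${}_{\stmod\UU}\Sigma^{d+2}={}_\TT\Sigma_*$. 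Feeding in the twisted fractional Calabi--Yau property of $A$ (which comes from \cite[Theorem~1.1]{HI}, not from a direct orbit argument on $\add M$), one obtains $\nu_E^\ell\simeq[(d+2)m-\ell]\circ\varphi_*$ on $\Db(\mod E)$ for an explicit automorphism $\varphi$ of $E$ induced from the twist on $A$. Your proposal to track the twist ``induced by the passage from $\Lambda$ to its quotient $E$'' is not how the twist arises; it comes from the twist on $A$ via the equivalence $\UU\simeq\proj^\Z T(E)$.
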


Lastly, as an application of Theorem~\ref{main theorem}, one can find new examples of fractionally Calabi--Yau algebras by using a computer algebra system, such as \cite{QPA}, to check whether the trivial extension of a candidate algebra is periodic.
We illustrate this in Example \ref{11 points} by sketching a classification of fractionally Calabi--Yau incidence algebras of distributive lattices on 11 points, which leads to the discovery of new fractionally Calabi--Yau algebras.

In view of Question~\ref{remove twist 2} and partial results such as Corollary~\ref{finite out again}, we find it natural to pose the following question.

\begin{question}
  Does every (twisted) fractionally Calabi--Yau algebra of finite global dimension have a finite outer automorphism group?
\end{question}

Note that we cannot drop the assumption of finite global dimension in the question above. In fact, any self-injective algebra is twisted $\frac{0}{1}$-Calabi--Yau whose twist is given by the Nakayama automorphism. But such an algebra is usually not fractionally Calabi--Yau since the Nakayama automorphism often has infinite order.

This article is structured as follows.
In Section~\ref{section 1} we give preliminary results, and in Section~\ref{section 2} we summarize known results on (twisted) periodicity of algebras.
Section~\ref{sec:periodic main} features the proof of one of our main result, Theorem~\ref{main theorem 2}, for trivial extension algebras. Using preliminary results given in Section~\ref{section 4}, we prove the trivial extension case of our second main result, Theorem~\ref{main theorem}, in Section~\ref{sec:fCY to per}.
Section~\ref{section:selfinjective} concludes the proofs of the two main theorems, by extending our results about trivial extensions to more general classes of orbit algebras.
Various examples of (twisted) fractionally Calabi--Yau algebras and (twisted) periodic algebras are discussed in Section~\ref{sec:eg}. Finally, in the appendix Section~\ref{sec:DynkinCY}, we give explicit Calabi--Yau dimensions of the path algebras of Dynkin quivers.

\section{Preliminaries}\label{section 1}

\subsection{Conventions and basic facts}\label{section 1.1}

Throughout this paper, $k$ denotes a field and $A$ a finite-dimensional $k$-algebra.
Unless otherwise specified, by $A$-module we mean finitely generated right $A$-module.
The category of $A$-modules is denoted by $\mod A$.
The \emph{stable module category} $\stmod A$ of $A$ is the quotient of $\mod A$ by the ideal of morphisms factoring through a projective.
We denote by $D:=\Hom_k(-,k)$ the $k$-linear duality, by $Z(A)$ the centre of $A$, and by $A^\times$ the group of unit elements of $A$.
For general background on representation theory and homological algebra of finite-dimensional algebras, we refer for example to \cite{ASS,SY2,Z}.

If $A$ is graded by some group $G$, the category of $G$-graded $A$-modules is denoted by $\mod^{G}A$, and the corresponding stable module category by $\stmod^{G}A$. For $M\in\mod^GA$, recall that the syzygy $\Omega(M)=\Omega_{A}(M)$ is the kernel of the projective cover of $M$ in $\mod^GA$.
If $A$ is a self-injective algebra, then $\stmod^GA$ has the structure of a triangulated category, and $\Omega:\stmod^GA\to\stmod^GA$ gives the inverse suspension functor $[-1]$.
For $a\in G$, we denote by $(a):\mod^GA\to\mod^GA$ the $a$-th grading shift functor, given by $(M(a))_i = M_{i+a}$ for each $i\in G$.

In this paper, we assume the grading group to be the integers, unless otherwise stated.
Note that, for any integer $n$, a $\Z$-graded algebra can be regarded as an $(\Z/n\Z)$-graded algebra in a canonical way, and thus there is a forgetful functor $\mod^{\Z}A\to\mod^{\Z/n\Z}A$.

For a $k$-algebra $A$, we denote by $\Aut_k(A)$ the group of $k$-algebra automorphisms of $A$, and by $\Out_k(A) = \Aut_k(A)/\mathop{\rm Inn}\nolimits_k(A)$ the outer automorphism group. Recall that an inner automorphism $\phi\in \mathop{\rm Inn}\nolimits_k(A)$ is an automorphism given by $\phi(a) = bab^{-1}$ for some $b\in A^{\times}$.
%
 If $A$ is graded, $\Aut_k^{\Z}(A)$ denotes the graded automorphism group (consisting of all grading-preserving automorphisms) of $A$.
We write $\phi^*:\mod A\to\mod A$ (respectively, $\phi^*:\mod^{\Z}A\to\mod^{\Z}A$) for the restriction functor along an automorphism $\phi\in\Aut_k(A)$ (respectively, $\phi\in \Aut_k^{\Z}(A)$). 

For $\phi\in\Aut_k(A)$, we denote by ${}_1A_\phi$ the $A^{\mathrm{e}}$-module, where the right action is twisted by $\phi$. Then $\phi^*$ is given by the tensor functor $-\otimes_A({}_1A_\phi)$. We also use the functor $\phi_*:=-\otimes_A({}_\phi A_1)$.
Let $\Pic_k(A)$ be the Picard group of $A$ \cite{Z}. Thus, an element of $\Pic_k(A)$ is the isomorphism class of an $A^{\mathrm{e}}$-module $X$ for which there exists an $A^{\mathrm{e}}$-module $Y$ such that $X\otimes_AY\simeq A\simeq Y\otimes_AX$ as $A^{\mathrm{e}}$-modules, and the multiplication is given by the tensor product.

The following is elementary.

\begin{proposition}\label{Out Pic}
For $\phi,\psi\in\Aut_k(A)$, the following conditions are equivalent.
\begin{enumerate}[\rm(i)]
\item $\phi=\psi$ in $\Out_k(A)$.
\item ${}_1A_\phi\simeq{}_1A_\psi$ as $A^{\mathrm{e}}$-modules.
\item The functors $\phi^*,\psi^*:\mod A\to\mod A$ are isomorphic.
\end{enumerate}
If $A$ is basic, then there is an isomorphism $\Out_k(A)\simeq\Pic_k(A)$ given by $\phi\mapsto{}_1A_\phi$.
\end{proposition}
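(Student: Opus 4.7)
The plan is to run a cyclic argument (ii) $\Leftrightarrow$ (iii) and (i) $\Leftrightarrow$ (ii), and then analyse the map $\phi \mapsto [{}_1A_\phi]$ separately. The implication (ii) $\Rightarrow$ (iii) is immediate from the definition $\phi^* = -\otimes_A({}_1A_\phi)$. For the converse, I would use an Eilenberg--Watts-style recovery: evaluating a natural isomorphism $\alpha \colon \phi^* \to \psi^*$ at the right $A$-module $A$ gives a right-linear isomorphism $\alpha_A \colon {}_1A_\phi \to {}_1A_\psi$, and naturality with respect to the left-multiplication endomorphisms $\mu_a \colon A \to A$, $x \mapsto ax$ (for $a \in A$), forces $\alpha_A$ to commute with the left $A$-action, upgrading it to a bimodule isomorphism.

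For (ii) $\Leftrightarrow$ (i), the idea is to extract a unit from a bimodule iso $f \colon {}_1A_\phi \to {}_1A_\psi$. As a left-linear map from $A$ to $A$, it must have the form $f(x) = xu$ with $u = f(1)$, and invertibility of $f$ forces $u \in A^\times$. The right-linearity constraint $f(\phi(b)) = f(1)\psi(b)$ becomes $\phi(b)u = u\psi(b)$, i.e.\ $\psi = \iota_{u^{-1}} \circ \phi$ for $\iota_v(x) := vxv^{-1}$, whence $\phi = \psi$ in $\Out_k(A)$. The converse is obtained by reversing the construction: given $\psi = \iota_v \circ \phi$ with $v \in A^\times$, the formula $f(x) = xv^{-1}$ defines the required bimodule isomorphism.

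For the last assertion, a direct verification gives a bimodule isomorphism ${}_1A_\phi \otimes_A {}_1A_\psi \simeq {}_1A_{\phi \circ \psi}$ via $x \otimes y \mapsto x\phi(y)$, so $\phi \mapsto [{}_1A_\phi]$ is a group homomorphism $\Aut_k(A) \to \Pic_k(A)$ whose kernel is the inner automorphism group by the equivalence (i) $\Leftrightarrow$ (ii); this already yields an injection $\Out_k(A) \hookrightarrow \Pic_k(A)$. The main obstacle is surjectivity when $A$ is basic. My plan is: let $P$ represent a class in $\Pic_k(A)$; since $- \otimes_A P$ is a self-equivalence of $\mod A$, it permutes the isomorphism classes of indecomposable projective right $A$-modules, and the basicness assumption ensures that $A_A = P_1 \oplus \cdots \oplus P_n$ with the $P_i$ pairwise non-isomorphic, hence $P \simeq A$ as right $A$-modules. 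Fixing such an isomorphism and transporting the left $A$-action through it produces an algebra automorphism $\phi$ of $A$, and one checks $P \simeq {}_\phi A_1 \simeq {}_1 A_{\phi^{-1}}$ as bimodules, giving surjectivity. Apart from this Krull--Schmidt-style reduction to the one-sided regular module, the whole argument is a direct unwinding of definitions.
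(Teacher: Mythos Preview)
Your argument is correct and is the standard unwinding of definitions one would expect here. The paper itself does not give a proof: it simply declares the proposition ``elementary'' and moves on, so there is nothing to compare against beyond noting that your write-up supplies the details the authors chose to omit.

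One very minor comment on the surjectivity step: you might make explicit why the left $A$-action on $P$ is faithful (so that the induced algebra map $A \to \End_A(P_A) \cong A$ is injective, hence bijective by a dimension count). This follows immediately from invertibility of $P$, since $A \to \End_{A^e}(P)$, $a \mapsto (p \mapsto ap)$, becomes the identity after applying $-\otimes_A P^{-1}$; but it is worth a sentence if you are writing the proof out in full.
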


It is elementary that any two complete sets of orthogonal primitive idempotents of $A$ are conjugate of each other \cite[Theorem 3.4.1]{DK}. In particular, for each complete set $e_1,\ldots,e_n$ of orthogonal primitive idempotents of $A$ and each $\phi\in\Aut_k(A)$, there exists $\psi\in\Aut_k(A)$ and a permutation $\sigma\in\mathfrak{S}_n$ such that $\phi=\psi$ in $\Out_k(A)$ and $\psi(e_i)=e_{\sigma(i)}$ for each $1\le i\le n$.

Let us recall the following properties of graded algebras.

\begin{proposition}[\cite{GG1,GG2}]\label{forgetful}
Let $A$ be a graded algebra, and $F:\mod^{\Z}A\to\mod A$ the forgetful functor.
\begin{enumerate}[\rm(a)]
\item $\rad A$ is a homogeneous ideal of $A$, and every simple $A$-module is gradable. \label{forget1}
\item $F$ sends simple objects in $\mod^{\Z}A$ to simple objects in $\mod A$. \label{forget2}
\item If $f:P\to M$ is a projective cover in $\mod^{\Z}A$, then $F(f):F(P)\to F(M)$ is a projective cover in $\mod A$. \label{forget3}
\item $F$ sends indecomposable objects in $\mod^{\Z}A$ to indecomposable objects in $\mod A$. \label{forget4}
\item Two indecomposable objects $X,Y$ in $\mod^{\Z}A$ are isomorphic in $\mod A$ if and only if $X\simeq Y(i)$ in $\mod^{\Z}A$ for some $i$. \label{forget5}
\end{enumerate}
\end{proposition}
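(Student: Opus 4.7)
The plan is to prove the five statements consecutively, leveraging the fundamental fact that the Jacobson radical of a finite-dimensional $\Z$-graded algebra is a homogeneous ideal. This single input makes parts (a)--(c) essentially formal, while the main work lies in (d) and (e), which concern the interaction between graded and ungraded indecomposable decompositions.

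For (a), once $\rad A$ is shown to be homogeneous, gradability of simples follows: $A/\rad A$ is graded semisimple, decomposes into graded simple factors via graded Wedderburn theory, and each such factor restricts to a simple ungraded $A$-module; every simple of $\mod A$ arises in this way. For (b), a simple $S\in\mod^{\Z}A$ is annihilated by $\rad A$, so it is a graded simple module over the graded semisimple algebra $A/\rad A$, and a second application of graded Wedderburn forces $F(S)$ to be simple. For (c), graded free modules restrict to free modules, so $F$ preserves projectives; a graded surjection $f:P\to M$ is essential iff $\ker f\subseteq P\cdot\rad A$, a condition unchanged by forgetting the grading, so Nakayama gives that $F(f)$ is also a projective cover.

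The substance lies in (d) and (e). For (d), $\End(F(M))$ is itself a finite-dimensional $\Z$-graded algebra whose degree-zero part is $\End^{\Z}(M)$, which is local by Krull--Schmidt in $\mod^{\Z}A$. Applying (a) to this algebra, its radical is homogeneous, and the quotient is graded semisimple with skew-field degree-zero part. One then shows, via the Gordon--Green covering-theoretic analysis, that $\End(F(M))/\rad$ itself is a skew field, using crucially that $\End(F(M))$ arises from an indecomposable graded module and not from an abstract graded algebra; hence $\End(F(M))$ is local and $F(M)$ is indecomposable. For (e), given an ungraded isomorphism $f:F(X)\to F(Y)$, decompose $f$ and $f^{-1}$ into homogeneous components $f_i:X\to Y(i)$ and $g_j:Y\to X(j)$. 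Expanding $f^{-1}\circ f=\mathrm{id}_X$ and projecting to the degree-zero component yields an equation expressing $\mathrm{id}_X$ as a finite sum of endomorphisms $g_{-i}\circ f_i$ in the local ring $\End^{\Z}(X)$, so at least one summand must be a unit, which forces the corresponding $f_i$ to be a graded isomorphism $X\simeq Y(i)$.

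The principal obstacle is the structural step in (d): a graded semisimple algebra with skew-field degree-zero part need not itself be a skew field (witness graded-Laurent-type algebras), so the conclusion cannot be drawn purely abstractly. The resolution exploits the fact that the algebra in question is the endomorphism ring of a specific indecomposable graded module. I would follow the approach of Gordon--Green \cite{GG1,GG2}, interpreting $F$ as restriction along a $\Z$-Galois cover of categories and invoking covering theory to convert graded indecomposable orbits into ungraded indecomposables.
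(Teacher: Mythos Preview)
The paper does not prove this proposition at all: its entire proof consists of pointers to specific results in Gordon--Green \cite{GG1,GG2} for each of the five parts. Your proposal therefore goes considerably further than the paper by sketching actual arguments.

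Your sketches for (a)--(c) are correct and standard. Your argument for (e) is also correct, but the final clause ``forces the corresponding $f_i$ to be a graded isomorphism'' hides a small step: from $g_{-i}\circ f_i$ being a unit in the local ring $\End^{\Z}_A(X)$ you only get that $f_i:X\to Y(i)$ is a split monomorphism; to conclude it is an isomorphism you should invoke the dimension equality $\dim_k X=\dim_k F(X)=\dim_k F(Y)=\dim_k Y(i)$.

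For (d) you correctly identify the genuine obstruction---a finite-dimensional graded semisimple algebra whose degree-zero part is a skew field need not itself be a skew field (for instance $k[t]/(t^2-1)$ with $\deg t=1$ when $\operatorname{char}k\neq2$)---and then propose to invoke the Gordon--Green covering-theoretic machinery. This is precisely what the paper does, so on this point your proposal and the paper coincide. One small caution about your setup preceding the obstruction: you assert that the degree-zero part of $\End_A(F(M))/\rad$ is a skew field, but this already requires $\rad(\End^{\Z}_A(M))\subseteq\rad(\End_A(F(M)))$, which is not automatic and is part of what the Gordon--Green argument establishes.
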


\begin{proof}
Statement (a) is \cite[Proposition 3.5]{GG1}, (b) is \cite[discussion post-Lemma 1.2]{GG2}, (c) is \cite[Proposition 1.3]{GG1}, (d) is \cite[Theorem 3.2]{GG1}, and (e) is \cite[Theorem 4.1]{GG1}.
\end{proof}

\subsection{Trivial extension algebras} \label{subsec:trivext}
Recall that the \emph{trivial extension algebra} $T(A)$ of a finite-dimensional algebra $A$, by definition, is the vector space
\[T(A)=A \oplus D(A) \qquad \text{with multiplication}\qquad (a,f)(b,g)=(ab,ag+fb)\]
for $a,b \in A$, $f,g \in D(A)$, where $D(A)$ is viewed as an $A$-$A$-bimodule.
It has a natural grading, given by $T(A)_0=A$ and $T(A)_1=DA$, and whenever we refer to $T(A)$ as a graded algebra, it is this grading that we have in mind.
The \emph{repetitive category} of $A$ is the category $\widehat{A}=\proj^{\Z}T(A)$ of graded projective $T(A)$-modules.
  It can be viewed as an algebra of infinite matrices of the form
   \[\widehat{A}=\begin{pmatrix}
 \ddots & & & & & \\
 \ddots & A & & & \\
  & DA & A & &\\
  & & DA &A & \\
 & & & \ddots & \ddots 
 \end{pmatrix},
   \]
see \cite{Hap} for more details.
For a positive integer $r$, the \emph{$r$-fold trivial extension} of $A$ is the category $T_r(A) = \proj^{\Z/r\Z}T(A)$. As an algebra, it is the orbit algebra $\widehat{A}/\langle\nu_{\widehat{A}}^r\rangle$ (see Section~\ref{section:selfinjective}) where $\nu_{\widehat{A}}$ is the Nakayama automorphism of $\widehat{A}$, and hence isomorphic to the $r\times r$ matrix algebra
\[
T_r(A)=
  \begin{pmatrix} 
A & & & & DA \\
DA & A & & & \\
 & DA & \ddots & &\\
 & & \ddots &\ddots & \\
& & & DA & A 
\end{pmatrix} ,
\]
for $r\ge2$, whilst $T_1(A) = T(A)$.

It is elementary that $\widehat{A}$ and $T_r(A)$ are self-injective, and their Nakayama automorphisms are given by (cyclic) shift one step down and right in the matrix. 
In particular, $T_r(A)$ is symmetric if and only if $r=1$.
Moreover, there are equivalences
\begin{equation}\label{mod Z/nZ B}
\mod\widehat{A}\simeq\mod^{\Z}T(A)\ \mbox{ and }\ \mod T_r(A)\simeq\mod^{\Z/r\Z}T(A).
\end{equation}

A proof of the following lemma can be found in \cite[Lemma 1.9]{FGR}.

\begin{lemma}\label{calculate inverse}
The units of the trivial extension algebra $T(A)$ of $A$ are given by $T(A)^\times = \{ (a,f)\mid a \in A^\times \}$.
The inverse of $(a,f) \in T(A)^\times$ is given by $(a^{-1}, -a^{-1}f a^{-1})$.
\end{lemma}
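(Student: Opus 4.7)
The plan is to prove both inclusions by direct computation using the multiplication rule $(a,f)(b,g) = (ab, ag+fb)$ of $T(A)$ together with the fact that $D(A)\cdot D(A) = 0$ in $T(A)$.

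For the inclusion $\{(a,f)\mid a\in A^\times\}\subseteq T(A)^\times$, I would simply check that the stated formula gives a two-sided inverse. Concretely, for $a\in A^\times$ and $f\in DA$, one computes
\[
(a,f)\cdot (a^{-1},-a^{-1}fa^{-1}) = (aa^{-1},\, a(-a^{-1}fa^{-1}) + fa^{-1}) = (1,\,0),
\]
and symmetrically $(a^{-1},-a^{-1}fa^{-1})\cdot(a,f) = (1,0)$. This is just a one-line verification.

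For the reverse inclusion $T(A)^\times \subseteq \{(a,f)\mid a\in A^\times\}$, suppose $(a,f)\in T(A)^\times$ has an inverse $(b,g)\in T(A)$. The defining equations $(a,f)(b,g) = (1,0) = (b,g)(a,f)$ unfold, via the multiplication rule, to
\[
ab = 1 = ba, \qquad ag + fb = 0 = bf + ga.
\]
The first pair shows immediately that $a\in A^\times$ (with $b=a^{-1}$), and then the second pair forces $g = -a^{-1}fa^{-1}$, recovering the stated formula for the inverse as a byproduct.

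There is no real obstacle here; the result is a routine calculation, and the argument amounts to unwinding the definition of multiplication in $T(A)$. The only minor subtlety is keeping track of left versus right actions of $A$ on $DA$ when manipulating the off-diagonal components, but this is entirely formal.
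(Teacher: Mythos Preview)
Your proof is correct; the paper itself does not give an argument but simply cites \cite[Lemma 1.9]{FGR}, so your direct verification is exactly what is needed to fill in the omitted details.
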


Later we need the following easy observation.

\begin{lemma} \label{inneriffidentity}
For an element $r \in Z(A)^\times$, let $\varphi_r: T(A) \rightarrow T(A)$ be the $K$-algebra automorphism of $T(A)$ given by $\varphi_r(a,f)=(a,rf)$.
Then $\varphi_r$ is an inner automorphism if and only if $r=1$.
\end{lemma}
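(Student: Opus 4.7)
The plan is to use Lemma~\ref{calculate inverse} to parametrise the units of $T(A)$, and then impose the equality $u(b,g)u^{-1} = \varphi_r(b,g) = (b, rg)$ coordinate by coordinate.

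One direction is immediate: if $r = 1$ then $\varphi_r = \mathrm{id}_{T(A)}$, which is inner. For the converse, I would assume $\varphi_r$ is inner and write the conjugator as $u = (a, f)$ with $a \in A^{\times}$ and $f \in DA$, so that $u^{-1} = (a^{-1}, -a^{-1}fa^{-1})$ by Lemma~\ref{calculate inverse}. A direct application of the multiplication rule of $T(A)$ then yields
\[u(b,g)u^{-1} = \bigl(aba^{-1},\; -(aba^{-1})\cdot f\cdot a^{-1} + a\cdot g\cdot a^{-1} + f\cdot ba^{-1}\bigr),\]
where the juxtapositions in the second coordinate refer to the $A$-bimodule action on $DA$.

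Next, I would compare this with $(b, rg)$. The first coordinate forces $aba^{-1} = b$ for every $b \in A$, so $a \in Z(A)$. Substituting $a \in Z(A)$ and specialising to $b = 1$ in the second coordinate, the two $f$-contributions cancel, leaving $a\cdot g\cdot a^{-1} = rg$ for every $g \in DA$. The key observation to close the argument is that for $a \in Z(A)$ the left and right bimodule actions of $a$ on $DA$ coincide—explicitly, $(a\cdot g\cdot a^{-1})(x) = g(a^{-1}xa) = g(x)$—so $rg = g$ for every $g \in DA$, and the separating property of $DA$ then forces $r = 1$.

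I do not anticipate a serious obstacle: the argument is essentially mechanical once the bimodule conventions $(a\cdot g)(x) = g(xa)$ and $(g\cdot a)(x) = g(ax)$ are fixed, and the only point requiring real care is the correct tracking of left versus right actions when multiplying out $u(b,g)u^{-1}$ and inserting the inverse from Lemma~\ref{calculate inverse}.
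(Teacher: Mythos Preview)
Your proposal is correct and follows essentially the same approach as the paper: both conjugate by a generic unit $(a,f)$, use the first coordinate to force $a\in Z(A)$, and then use the second coordinate with centrality of $a$ to conclude $rg=g$ for all $g$, hence $r=1$. The only cosmetic difference is that the paper splits the computation into the two special cases $(b,0)$ and $(0,g)$ rather than computing $u(b,g)u^{-1}$ in one go and then specialising to $b=1$.
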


\begin{proof}
It suffices to show the ``only if'' part. Assume that $\varphi_r$ is an inner automorphism, given by conjugation with $(a,f)\in T(A)^\times$.
For all $b\in A$, using Lemma~\ref{calculate inverse}, we get
\[(b,0)= \varphi_r(b,0)=(a,f)(b,0)(a,f)^{-1} = (aba^{-1}, fba^{-1} - aba^{-1}fa^{-1})\]
and hence $b=aba^{-1}$, implying that $a\in Z(A)$.
Moreover, for all $g\in DA$, \[(0,rg)=\varphi_r(0,g)=(a,f)(0,g)(a,f)^{-1}=(0,ag)(a^{-1},-a^{-1}fa^{-1})=(0,aga^{-1})=(0,g),\]
and it follows that $r=1$.
\end{proof}

\subsection{Serre functor, fractionally Calabi--Yau algebras and cluster tilting}\label{subsec:CYdim}

The algebra $A$ is said to be \emph{Iwanaga--Gorenstein} if it has finite injective dimension both as a right- and left $A$-module.
For such an algebra $A$, the \emph{Nakayama functor}
\begin{equation}\label{define nu}
\nu:= - \Lotimes_ADA \simeq D\circ \RHom_A(-,A) \:: \per A \to \per A
\end{equation}
is an auto-equivalence of the perfect derived category $\per A=\Kb(\proj A)$ of $A$-modules, satisfying the bifunctorial isomorphism
\[
\Hom_{\per A}(X,Y)\simeq D\Hom_{\per A}(Y,\nu(X)).
\]
In other words, $\nu$ is a \emph{Serre functor} on $\per A$.
In particular, if $A$ has finite global dimension, then it is Iwanaga--Gorenstein and $\per A\simeq\Db(\mod A)$, so $\nu$ gives a Serre functor on $\Db(\mod A)$.
Moreover, in this case, Happel gave a triangle equivalence \cite{Hap}
\begin{equation}\label{happel}
\Db(\mod A)\simeq\stmod^{\Z} T(A).
\end{equation}
The uniqueness of the Serre functor shows that the following diagram commutes up to isomorphism of functors:
\begin{equation}\label{serre}
\xymatrix{
\Db(\mod A)\ar[r]^{\sim}\ar[d]^\nu&\stmod^{\Z}T(A)\ar[d]^{\Omega\circ(1)}\\
\Db(\mod A)\ar[r]^{\sim}&\stmod^{\Z}T(A)\,.}
\end{equation}

\begin{definition}\label{def:fCY}

Let $\ell$ and $m$ be integers, and $\ell\ne0$. An Iwanaga--Gorenstein algebra $A$ is said to be \emph{twisted $\frac{m}{\ell}$-Calabi--Yau} if there is an isomorphism of functors
\begin{equation}\label{eq:tfCY}
\nu^\ell \simeq [m]\circ \phi^*
\end{equation}
on $\per A$ for some $\phi\in\Aut_k(A)$, which we call the \emph{associated twist}.
If $\phi=\mathrm{id}$ then $A$ is \emph{$\frac{m}{\ell}$-Calabi--Yau}.
The algebra $A$ is \emph{(twisted) fractionally Calabi--Yau} if it is (twisted)  $\frac{m}{\ell}$-Calabi--Yau for some $m$ and $\ell$.
For an $\frac{m}{\ell}$-Calabi--Yau algebra $A$, the rational number $m/\ell$ is uniquely determined by $A$.
We write $\CYdim A=(m,\ell)$ for the smallest $m\in\Z$, $\ell\in\Z_{>0}$ such that $A$ is $\frac{m}{\ell}$-Calabi--Yau.
\end{definition}

We refer to Section~\ref{sec:eg} for examples (known and new) of fractionally Calabi--Yau and twisted fractionally Calabi--Yau algebras.

\begin{remark}
By Proposition~\ref{Out Pic}, a twisted fractionally Calabi--Yau algebra is fractionally Calabi--Yau if and only if the order of the associated twist in the outer automorphism group is finite. It is open whether this is always the case -- this is the content of Question \ref{remove twist 2} in the introduction.
\end{remark}

For general triangulated categories, as in the case of the usual Calabi--Yau property \cite[Section~2.6]{K2}, there is a stronger version of the (twisted) fractional Calabi--Yau property, by which \eqref{eq:tfCY} is required to be an isomorphism of \emph{triangle} functors.
However, in the setting of $\per A$, the two versions coincide.
The following characterisations will be used frequently in the sequel.

\begin{proposition}\label{characterise CY}
Assume that $A$ is Iwanaga--Gorenstein.
\begin{enumerate}[\rm(a)]
\item The following statements are equivalent. \label{tfcy}
\begin{enumerate}[\rm(i)]
\item $A$ is twisted $\frac{m}{\ell}$-Calabi--Yau;
\item $(DA)^{\Lotimes_A\ell}\simeq A[m]$ in $\Db(\mod A)$;
\item $(DA)^{\Lotimes_A\ell}\simeq {}_\phi A_1[m]$ in $\Db(\mod A^{\mathrm{e}})$ for some $\phi\in\Aut_k(A)$;
\item there is an isomorphism of triangle functors $\nu^\ell \simeq [m]\circ \phi^*$ for some $\phi\in\Aut_k(A)$.
\end{enumerate}
\item The following are equivalent. \label{fcy}
\begin{enumerate}[\rm(i)]
\item $A$ is $\frac{m}{\ell}$-Calabi--Yau; 
\item $(DA)^{\Lotimes_A\ell}\simeq A[m]$ in $\Db(\mod A^{\mathrm{e}})$;
\item there is an isomorphism of triangle functors $\nu^\ell \simeq [m]$. \end{enumerate}
\end{enumerate}
\end{proposition}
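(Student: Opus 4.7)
The plan is to prove part (a) via the cycle (iv) $\Rightarrow$ (i) $\Rightarrow$ (ii) $\Rightarrow$ (iii) $\Rightarrow$ (iv), and then deduce part (b) by specializing the twist to be trivial. The implication (iv) $\Rightarrow$ (i) is immediate, since an isomorphism of triangle functors is, in particular, one of the underlying additive functors. For (i) $\Rightarrow$ (ii), I would evaluate the natural isomorphism $\nu^\ell \simeq [m]\circ\phi^*$ at $A \in \per A$: the left-hand side becomes $(DA)^{\Lotimes_A \ell}$ by definition of $\nu$, and the right-hand side is ${}_1A_\phi[m]$, which is isomorphic to $A[m]$ as a right $A$-module (for instance via $x \mapsto \phi^{-1}(x)$), giving (ii).

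The main step is (ii) $\Rightarrow$ (iii). I would observe that $C := (DA)^{\Lotimes_A \ell}$ is naturally an object of $\Db(\mod A^e)$. By (ii), the right-module complex $C$ has cohomology concentrated in degree $-m$ and free of rank one there; since the cohomology functors are the same whether computed in $\mod A$ or $\mod A^e$, the same statement holds at the bimodule level, so $C$ is quasi-isomorphic to $H^{-m}(C)[m]$ in $\Db(\mod A^e)$. A bimodule that is free of rank one as a right $A$-module is necessarily of the form ${}_\phi A_1$ for a $k$-algebra endomorphism $\phi$ of $A$, uniquely determined by the left action on a chosen generator. To upgrade $\phi$ to an automorphism, I would use that $\nu$ is a Serre functor on $\per A$, hence an autoequivalence, so $\nu^\ell = -\Lotimes_A C$ is an autoequivalence and $C$ is invertible in the derived Picard group. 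Thus ${}_\phi A_1$ represents an element of $\Pic_k(A)$, which forces $\phi \in \Aut_k(A)$ (e.g., $\ker\phi$ must annihilate the invertible bimodule $A$, so $\phi$ is injective, and hence bijective by finite-dimensionality). The implication (iii) $\Rightarrow$ (iv) then follows by tensoring: the bimodule isomorphism $C \simeq {}_\phi A_1[m]$ yields an isomorphism of triangle functors $\nu^\ell \simeq \phi_*[m]$, and since $\phi_* = (\phi^{-1})^*$, we obtain (iv) with twist $\phi^{-1}$.

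For part (b), the implication (iii) $\Rightarrow$ (i) is trivial and (ii) $\Rightarrow$ (iii) follows by tensoring with $-\Lotimes_A$. The substance is (i) $\Rightarrow$ (ii). Applying part (a) to (b)(i) produces some $\phi \in \Aut_k(A)$ with $C \simeq {}_\phi A_1[m]$ in $\Db(\mod A^e)$ and a triangle isomorphism $\nu^\ell \simeq \phi_*[m]$. Combined with $\nu^\ell \simeq [m]$ from (b)(i), one gets $\phi_* \simeq \mathrm{id}$ as additive functors on $\per A$. Evaluating the underlying natural isomorphism at $A \in \proj A$ and invoking naturality with respect to the right-module endomorphisms of $A$ (which are exactly left multiplications by elements of $A$), one checks that the resulting isomorphism $A \simeq {}_\phi A_1$ respects the left action up to conjugation, so $\phi$ is inner and therefore trivial in $\Out_k(A)$ by Proposition~\ref{Out Pic}. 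Hence ${}_\phi A_1 \simeq A$ as bimodules and (b)(ii) follows.

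The hard part will be the passage (a)(ii) $\Rightarrow$ (iii): extracting the bimodule structure on $H^{-m}(C)$ to identify it with ${}_\phi A_1$, and then using invertibility in $\Pic_k(A)$ to guarantee that $\phi$ is a genuine automorphism rather than merely a $k$-algebra endomorphism. Every other step reduces to either trivial functoriality or direct evaluation and tensoring.
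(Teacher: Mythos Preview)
Your proposal is correct and follows essentially the same route as the paper's proof, with the difference that the paper outsources the core equivalences (i)$\Leftrightarrow$(ii)$\Leftrightarrow$(iii) in (a) to \cite[Proposition~4.3]{HI} (noting that Iwanaga--Gorenstein suffices in place of finite global dimension), whereas you unpack this reference: evaluating at $A$ for (i)$\Rightarrow$(ii), and using cohomology concentration together with invertibility in the derived Picard group for (ii)$\Rightarrow$(iii). The paper then handles (iv)$\Rightarrow$(i) and (iii)$\Rightarrow$(iv) exactly as you do.

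For part (b), the paper simply says the argument is ``parallel'', which your proposal makes concrete: you run (a) to produce a twist $\phi$, then use the untwisted isomorphism $\nu^\ell\simeq[m]$ from (b)(i) together with naturality at $A$ to force $\phi$ to be inner. This is the standard way to fill in that gap and matches what Proposition~\ref{Out Pic} encodes. One small remark: when you compare the two isomorphisms $\nu^\ell\simeq[m]$ and $\nu^\ell\simeq\phi_*\circ[m]$ to get $\phi_*\simeq\mathrm{id}$, this a priori lives on $\per A$; the argument you sketch only needs the restriction to $\proj A$, which is immediate since both functors are exact and $A\in\proj A\subset\per A$.
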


\begin{proof}
  We only prove \eqref{tfcy} since the proof of \eqref{fcy} is parallel. 
The equivalences (i)$\Leftrightarrow$(ii)$\Leftrightarrow$(iii)
are \cite[Prop 4.3]{HI} and its proof. Note that $\gldim A<\infty$ is assumed there, but Iwanaga--Gorensteiness is enough. 
The implication (iv)$\Rightarrow$(i) holds by definition, and  (iii)$\Rightarrow$(iv) follows from the isomorphism
\[\nu^\ell=-\Lotimes_A(DA)^{\Lotimes_A\ell}\simeq -\Lotimes_A{}_\phi A_1[m]=[m]\circ\phi_* .\qedhere\]
\end{proof}

We will give examples of fractionally Calabi--Yau algebras in Section \ref{example of CY}.


\section{Preliminaries on periodicity and twisted periodicity}\label{section 2}

In this section, we review various notions of periodicity of algebras and modules. Recall that $A$ denotes a finite-dimensional algebra over an arbitrary field $k$.

\begin{definition} \label{def:periodic}
\begin{enumerate}[\rm(a)]
\item An $A$-module $M$ is \emph{$\Omega$-periodic} if there is some integer $n>0$ such that $\Omega^n(M)\simeq M$ in $\mod A$.
\item The algebra $A$ is \emph{(bimodule) periodic} if it is $\Omega$-periodic as a $A^{\mathrm{e}}$-module, i.e.\ $\Omega_{A^{\mathrm{e}}}^n(A)\simeq A$ in $\mod A^{\mathrm{e}}$ for some integer $n>0$. In this case, we call $A$ \emph{$n$-periodic}.

\item $A$ is \emph{twisted (bimodule) periodic} if $\Omega_{A^{\mathrm{e}}}^n(A)\simeq {}_1A_\phi$ in $\mod A^{\mathrm{e}}$ for some integer $n>0$ and $\phi\in \Aut_k(A)$.  We call $\phi$ the \emph{associated twist}.
\end{enumerate}
\end{definition}

Clearly, periodic algebras are twisted periodic, but it is still open whether the converse holds -- this is the content of Question \ref{conj:periodicity} in the introduction. 

\begin{remark}\label{periodic conjecture via twist}
By Proposition~\ref{Out Pic}, a twisted periodic algebra is periodic if and only if the order of the associated twist in the outer automorphism group is finite.
\end{remark}

We start by listing a few observations that will be useful for us later.
The property (d) below was pointed out to us by \O yvind Solberg.
\begin{proposition}\label{elementary properties}
\begin{enumerate}[\rm(a)]
\item $A\times B$ is periodic if and only if both $A$ and $B$ are periodic.
\item Periodic algebras are self-injective.
\item Periodicity is preserved by derived (and hence, Morita) equivalence.
\item If $k$ is a field of characteristic different from two, then the period of any periodic finite-dimensional $k$-algebra is even.
\end{enumerate}
\end{proposition}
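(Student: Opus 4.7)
For part (a), the plan is to use the decomposition $(A\times B)^e \simeq A^e \times (A\otimes_k B^{\op}) \times (B\otimes_k A^{\op}) \times B^e$ of the enveloping algebra. Under this decomposition the regular bimodule $A\times B$ is concentrated in the $A^e$- and $B^e$-components, where it equals $A$ and $B$ respectively. A minimal projective bimodule resolution of $A\times B$ therefore decomposes as the direct sum of minimal projective bimodule resolutions of $A$ over $A^e$ and of $B$ over $B^e$, giving $\Omega^m_{(A\times B)^e}(A\times B) \simeq \Omega^m_{A^e}(A)\oplus \Omega^m_{B^e}(B)$ for every $m\ge 0$. The equivalence follows immediately, with common period $\lcm$ of the individual periods.

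Part (b) will be the main obstacle. The plan is first to show that bimodule periodicity of $A$ forces every right $A$-module $M$ to have a periodic projective resolution. Starting from a periodic projective bimodule resolution $P_\bullet\to A$ with $P_{i+n}\simeq P_i$, each $P_i$ is free as a left $A$-module, so tensoring over $A$ with $M$ yields a complex $M\otimes_A P_\bullet\to M$ which remains exact (since $\Tor_i^A(M,A)=0$ for $i>0$) and whose terms $M\otimes_A(Ae_j\otimes_k e_lA) \simeq (e_lA)^{\dim_k Me_j}$ are projective right $A$-modules. The inherited $n$-periodicity furnishes a natural isomorphism $\Omega_A^n \simeq \mathrm{id}$ of endofunctors of $\stmod A$, making $\Omega_A$ an auto-equivalence. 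Consequently every $M\in\mod A$ is stably isomorphic to some $\Omega_A(N)$, so there are projectives $Q_1,Q_2$ and a projective $P'\supseteq\Omega_A(N)$ with $M\oplus Q_1 \simeq \Omega_A(N)\oplus Q_2 \hookrightarrow P'\oplus Q_2$; in particular $M$ itself embeds in a projective. Applied to an indecomposable injective $I$ this embedding splits, so $I$ is projective; counting isomorphism classes yields $\inj A=\proj A$, and $A$ is self-injective.

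For part (c), the plan is to invoke Rickard's theorem: a derived equivalence between $A$ and $B$ lifts, via a two-sided tilting complex, to a derived equivalence $\Db(\mod A^e)\simeq\Db(\mod B^e)$ sending the regular bimodule $A$ to $B$. Using (b), both algebras are self-injective, so the singularity quotient $\Db(\mod A^e)/\per A^e$ identifies with $\stmod A^e$. Bimodule periodicity is equivalent to the existence of an isomorphism $A\simeq A[n]$ in this singularity category, which is manifestly preserved by a derived equivalence; hence periodicity transfers.

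For part (d), the strategy is to exploit graded commutativity of the cup product on Hochschild cohomology. The isomorphism $\Omega^n_{A^e}(A)\simeq A$ yields a distinguished nonzero element $\eta\in HH^n(A)=\Ext^n_{A^e}(A,A)$ whose cup product induces isomorphisms $HH^{*}(A)\simeq HH^{*+n}(A)$; in particular $\eta\cup\eta\ne 0$ in $HH^{2n}(A)$, as it realises the $2n$-periodicity obtained by composition. On the other hand, graded commutativity gives $\eta\cup\eta=(-1)^{n^2}\eta\cup\eta=(-1)^n\eta\cup\eta$. If $n$ were odd, this would force $2(\eta\cup\eta)=0$; combined with $\mathrm{char}(k)\ne 2$ this contradicts $\eta\cup\eta\ne 0$. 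Hence $n$ must be even.
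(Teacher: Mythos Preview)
Your arguments for (a) and (d) are correct and coincide with the paper's: the paper does exactly the syzygy decomposition in (a), and for (d) uses the same graded-commutativity trick on the periodicity class in Hochschild cohomology (the paper phrases it as ``$x$ is not nilpotent'', which is equivalent to your $\eta\cup\eta\ne 0$).

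For (b) and (c) the paper simply cites \cite{GSS} and \cite{ES}, respectively, whereas you supply self-contained arguments. Your proof of (b) is correct and is essentially the standard argument behind the cited result: tensoring the bimodule periodicity sequence forces $\Omega^n\simeq\mathrm{id}$ on $\stmod A$, so every module is a syzygy up to projectives and hence embeds in a projective, giving self-injectivity. For (c) your singularity-category argument is the right idea, but there is a small gap: you write ``Using (b), both algebras are self-injective'', yet (b) only tells you that the \emph{periodic} algebra $A$ is self-injective; at this stage you do not yet know $B$ is periodic. You need the (standard, but separate) fact that self-injectivity is preserved under derived equivalence to conclude that $B$, and hence $B^e$, is self-injective --- which is what lets you identify $\DDD_{\sg}(B^e)$ with $\stmod B^e$ and read off $\Omega^n_{B^e}(B)\simeq B$ from $B\simeq B[n]$. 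Once that is inserted, the argument goes through.
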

\begin{proof}
(a)
As an $(A\times B)^{\mathrm{e}}$-module, $A\times B\cong A\oplus B$ with the obvious action. Hence, $\Omega_{(A\times B)^{\mathrm{e}}}^n(A\times B)\cong \Omega_{A^{\mathrm{e}}}^n(A)\oplus \Omega_{B^{\mathrm{e}}}^n(B)$ for all $n$, and the equivalence follows.

(b) This is \cite[1.4]{GSS} without the ring-indecomposability condition, which is superfluous by (a).

(c) This is \cite[Thm 2.9]{ES} with conditions relaxed thanks to (a) and (b).

(d) Let $A$ be $p$-periodic, so that $\Omega^p_{A^{\mathrm{e}}}(A)\simeq A$. 
Then the element $x\in\mathrm{HH}^p(A)=\Ext_{A^{\mathrm{e}}}^p(A,A)$ in the $p$-th Hochschild cohomology corresponding to the first part of the minimal projective resolution
\[
0\to A \rightarrow P_{p-1} \rightarrow \cdots \rightarrow P_0 \rightarrow A\to 0
\]
of $A$ as an $A^{\mathrm{e}}$-module is not nilpotent; see, e.g.\ \cite[1.3]{GSS}.
On the other hand, since the Hochschild cohomology ring is graded-commutative \cite[Corollary~1]{Ger},
and $\mathop{\rm char} k \ne2$,
 every element $y$ in odd degree satisfies $y^2=0$. Thus $p$ must be even.
\end{proof}

\begin{example}\label{example of periodic algebra}
Two fundamental classes of examples of bimodule periodic algebras are preprojective algebras \cite{ES, Bu}
 (see also \cite{AR}) and trivial extension algebras of Dynkin type \cite{BBK}.
Further examples include:

\begin{enumerate}[\rm(a)]
\item self-injective algebras of finite representation type over algebraically closed fields \cite{Du2};
\item mesh algebras of Dynkin type \cite[Section~6]{BBK}, \cite{Du4};
\item weighted surface algebras (with four exceptions) \cite{ES6} and a closely related family called algebras of generalized quaternion type \cite{ES4}, over algebraically closed fields;
\item blocks of finite group algebras over algebraically closed fields with positive characteristic such that the defect group is cyclic (by (a)) or generalized quaternion (by (c)).
\end{enumerate}
\end{example}

We start our treatment by recalling some equivalent conditions for twisted periodicity.

\begin{proposition}\cite{GSS}\label{thm:hani}
Assume that $A/\rad A$ is a separable $k$-algebra.
Then, for any $n\in \Z_{>0}$, the following statements are equivalent.
\begin{enumerate}[\rm(i)]
\item {\rm (Simple periodicity)} $\Omega^n(A/\rad A)\simeq A/\rad A$ in $\stmod A$. \label{thm:hani1}
\item {\rm (Twisted functorial periodicity)}
  There exists some automorphism $\phi\in \Aut_k(A)$ such that $\Omega^n\simeq \phi^*$ as autoequivalences of $\stmod A$.
\item {\rm (Twisted periodicity)} $\Omega_{A^{\mathrm{e}}}^n(A)\simeq {}_{1}A_{\psi}$ in $\stmod A^{\mathrm{e}}$ for some automorphism $\psi\in \Aut_k(A$).
\end{enumerate}
\end{proposition}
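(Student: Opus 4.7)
The implications (iii)$\Rightarrow$(ii)$\Rightarrow$(i) follow from standard functorial arguments, while (i)$\Rightarrow$(iii) is the main content, relying crucially on the separability assumption.

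For (iii)$\Rightarrow$(ii), suppose $\Omega^n_{A^e}(A)\simeq{}_1A_\psi$ in $\stmod A^e$. I would take a minimal projective bimodule resolution $P_\bullet\to A$ and tensor it on the left with an arbitrary $M\in\mod A$. Each $P_i$ is a summand of a free $A^e$-module, hence projective as a left $A$-module, so $M\otimes_AP_\bullet\to M$ is a projective resolution in $\mod A$. This yields $\Omega^n(M)\simeq M\otimes_A\Omega^n_{A^e}(A)\simeq\psi^*(M)$ in $\stmod A$, functorially in $M$, which is (ii). For (ii)$\Rightarrow$(i), apply the natural isomorphism $\Omega^n\simeq\phi^*$ to $A/\rad A$; since $\phi$ preserves the characteristic ideal $\rad A$, it induces $\phi^*(A/\rad A)\simeq A/\rad A$, whence (i).

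For (i)$\Rightarrow$(iii), set $S=A/\rad A$. Using Morita invariance of both conditions (cf.\ Proposition~\ref{elementary properties}(c)), I would reduce to the case that $A$ is basic, with primitive orthogonal idempotents $e_1,\dots,e_r$ and simples $S_i=e_iA/e_i\rad A$. The separability of $S$ yields three structural facts that drive the proof: that $\rad(A^e)=\rad(A)\otimes_k A^{\op}+A\otimes_k\rad(A^{\op})$, that the simple $A^e$-modules are exactly the $S_i\otimes_k S_j^*$, and that a minimal projective bimodule resolution $P_\bullet\to A$ has the form $P_n=\bigoplus_{i,j}(Ae_i\otimes_k e_jA)^{d^n_{ij}}$ where the multiplicities $d^n_{ij}$ record the composition factors of the tops of the syzygies. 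As a consequence, the functor $-\otimes_A S$ preserves minimality of projective bimodule resolutions (and similarly $S\otimes_A-$ on the left), so $P_\bullet\otimes_A S\to S$ is the minimal projective resolution of $S$ in $\mod A$.

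Writing $N:=\Omega^n_{A^e}(A)$ (chosen without projective bimodule summands), the preceding gives $N\otimes_A S\simeq\Omega^n_A(S)\simeq S$ in $\stmod A$ under hypothesis (i), and symmetrically $S\otimes_A N\simeq S$ as left $A$-modules. Together with the structural description above, these force the multiplicities $d^n_{ij}$ to form a permutation matrix $\delta_{\sigma(i),j}$ for some $\sigma\in\mathfrak{S}_r$, and hence exhibit $N$ as an invertible $A^e$-module, that is, a class in $\Pic_k(A)$. For basic $A$, Proposition~\ref{Out Pic} identifies $\Pic_k(A)\simeq\Out_k(A)$ via $\psi\mapsto{}_1A_\psi$, so $N\simeq{}_1A_\psi$ for some $\psi\in\Aut_k(A)$, giving (iii). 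The main obstacle is this final step: turning the one-sided tensor computations with $S$ into a genuine bimodule isomorphism with some ${}_1A_\psi$. This requires the explicit description of minimal projective bimodule resolutions afforded by the separability of $S$, together with the basic-algebra identification $\Pic_k(A)\simeq\Out_k(A)$; without separability, only the weaker functorial statement (ii) is recoverable from (i).
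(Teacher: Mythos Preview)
Your sketch is essentially correct and reconstructs the argument of \cite[1.4]{GSS}, which is precisely what the paper does: the paper's ``proof'' is a citation to \cite{GSS} together with remarks that the hypotheses there (algebraically closed field, ring-indecomposability, idempotent-fixing $\psi$) can be relaxed via \cite[2.1]{H} and Proposition~\ref{elementary properties}. So you are not taking a different route; you are supplying the content that the paper outsources.

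One step in your (i)$\Rightarrow$(iii) deserves more care. You write that the conditions $N\otimes_AS\simeq S$ and $S\otimes_AN\simeq S$ ``force the multiplicities $d^n_{ij}$ to form a permutation matrix \dots\ and hence exhibit $N$ as an invertible $A^e$-module''. The permutation pattern describes only the projective cover $P_n$ of $N$, i.e.\ $\operatorname{top}(N)$, and this alone does not make $N$ invertible. What actually closes the argument is that $N$, being a bimodule syzygy of $A$, is projective as a one-sided module on each side; combined with $S\otimes_AN\simeq S$ (and its left analogue) this forces $N\simeq A$ as a left $A$-module and as a right $A$-module separately. A bimodule that is free of rank one on each side over a basic algebra is then necessarily of the form ${}_1A_\psi$ (choose a left generator $n$, define $\psi$ by $n\cdot a=\psi(a)\cdot n$). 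This is the point where Proposition~\ref{Out Pic} enters. Your invocation of Proposition~\ref{elementary properties}(c) for the Morita reduction is also slightly off, since that item addresses periodicity rather than the conditions (i)--(iii); the reduction is valid, but the justification is the general Morita invariance of $\Pic_k$ and of the stable module category, not that particular proposition.
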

Note that simple periodicity, condition~\eqref{thm:hani1}, holds if and only if all simple $A$-modules are periodic (although not necessarily of the same period $n$).

\begin{proof}
This is \cite[1.4]{GSS} with the following conditions removed:
Firstly, the assumption that $k$ is algebraically closed can be replaced by the separability of the $k$-algebra $A/\rad A$; see \cite[2.1]{H}. Secondly, ring-indecomposable is dropped by Proposition \ref{elementary properties} (a).  Lastly, the idempotent-fixing property in \cite[1.4(b)]{GSS} (which corresponds to (iii) here) is automatic if we replace $\psi$ by a suitable power $\psi^m$.
\end{proof}

All notions of periodicity admit a graded analogue for graded algebras. In the case of $T(A)$, these notions provide a middle ground which serves to make translations between (ungraded) periodicity properties of $T(A)$ and fractional Calabi--Yau properties of $A$.

\begin{definition} \label{def:gr-periodic}
Assume that $A$ is graded.
\begin{enumerate}[\rm(a)]
\item A graded module $M\in \mod^\Z A$ is \emph{graded $\Omega$-periodic} if there exist some integers $n>0$ and $a\in\Z$ such that $\Omega^n(M)\simeq M(a)$ in $\mod^{\Z}A$.

\item The algebra $A$ is \emph{graded (bimodule) periodic} if it is graded $\Omega$-periodic as a graded $A^{\mathrm{e}}$-module.
\item The algebra $A$ is \emph{graded twisted (bimodule) periodic} if $\Omega_{A^{\mathrm{e}}}^n(A)\simeq {}_1A_\phi(a)$ in $\mod A^{\mathrm{e}}$ for some integers $n>0$, $a\in\Z$ and a graded automorphism $\phi\in \Aut_k^\Z(A)$.
\end{enumerate}
\end{definition}

We have the following equivalent conditions of graded twisted periodicity, similar to Proposition~\ref{thm:hani}.

\begin{proposition}{\cite[2.4]{H}}\label{thm:hani-gr}
Assume that $A$ is graded, that $A/\rad A$ is a separable $k$-algebra, and let $a\in\Z$, $n\in \Z_{>0}$.
The following statements are equivalent.
\begin{enumerate}[\rm(i)]
\item {\rm (Graded simple periodicity)} $\Omega^n(A/\rad A)\simeq (A/\rad A)(a)$ in $\stmod^{\Z}A$.

\item {\rm (Graded twisted functorial periodicity)}
  There exists a graded automorphism $\phi\in \Aut_k^\Z(A)$ of $A$ such that $\Omega^n\simeq \phi^*\circ(a)$ as autoequivalences of $\stmod^{\Z} A$.

\item {\rm (Graded twisted periodicity)} $\Omega_{A^{\mathrm{e}}}^n(A)\simeq {}_{1}A_{\psi}(a)$ in $\stmod^{\Z}A^{\mathrm{e}}$ for some graded automorphism $\psi\in \Aut_k^\Z(A)$.
\end{enumerate}
\end{proposition}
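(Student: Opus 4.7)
The plan is to transplant the proof of the ungraded Proposition~\ref{thm:hani} into the graded setting, using Proposition~\ref{forgetful} to keep track of the grading on simples, projective covers, and indecomposables throughout.

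For the easy directions (iii)$\Rightarrow$(ii)$\Rightarrow$(i), first take the initial $n$ terms of a minimal graded projective $A^e$-resolution of $A$. Each graded $A^e$-projective is projective as a graded right $A$-module, so tensoring this resolution with any $M \in \mod^{\Z}A$ over $A$ yields a graded projective resolution of $M$, giving a natural isomorphism $\Omega^n(M) \simeq \Omega^n_{A^e}(A) \otimes_A M$ in $\stmod^{\Z}A$. Combining this with (iii) gives $\Omega^n(M) \simeq \psi^*(M)(a)$ naturally in $M$, proving (ii). Evaluating at $A/\rad A$ (a graded semisimple concentrated in degree $0$, which every graded $\phi^*$ merely permutes) then yields (i).

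The substance lies in (i)$\Rightarrow$(iii). First, decompose $A/\rad A \simeq \bigoplus_i S_i$ into graded simples concentrated in degree $0$; this is possible because $\rad A$ is homogeneous by Proposition~\ref{forgetful}\eqref{forget1}. Since the shifts $S_i(a)$ are pairwise nonisomorphic for distinct $i$ (by Proposition~\ref{forgetful}\eqref{forget5}), Krull--Schmidt in $\stmod^{\Z}A$ together with (i) produces a permutation $\sigma$ such that $\Omega^n(S_i) \simeq S_{\sigma(i)}(a)$ in $\stmod^{\Z}A$. Picking a complete set of orthogonal primitive idempotents $e_1, \ldots, e_r$ of $A$ lying in $A_0$, the graded analogue of the standard conjugacy result for such sets -- invoking the separability of $A/\rad A$ -- produces a graded automorphism $\phi \in \Aut_k^{\Z}(A)$ with $\phi(e_i) = e_{\sigma(i)}$, whence $\phi^*(S_i) \simeq S_{\sigma(i)}$ as graded modules.

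To promote this simple-level agreement to a bimodule isomorphism, I would adapt the argument of \cite[1.4]{GSS} to the graded setting. The idea is to compare the minimal graded projective covers in $\mod^{\Z}A^e$ of both $\Omega^n_{A^e}(A)$ and ${}_1A_\phi(a)$: by the agreement of the induced syzygy functors on all graded simples established in the previous step, together with a computation of the tops via the simple $A^e$-modules of the form $S_i \otimes_k DS_j$, the two graded bimodules have isomorphic tops. This forces an isomorphism in $\stmod^{\Z}A^e$, possibly after replacing $\phi$ by another graded automorphism $\psi$ that differs from $\phi$ by an inner automorphism and hence induces the same action on graded simples. The main obstacle here is establishing the ``reducedness'' of $\Omega^n_{A^e}(A)$ and ${}_1A_\phi(a)$ (absence of graded projective $A^e$-direct summands), which again relies on the separability of $A/\rad A$; once this is in place, Proposition~\ref{forgetful}\eqref{forget3} ensures that all of the comparison of projective covers respects the grading.
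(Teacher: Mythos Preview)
The paper's own proof is a one-line citation: it invokes \cite[2.4]{H} (Hanihara) and notes, via Proposition~\ref{elementary properties}, that the ring-indecomposability and non-semisimplicity hypotheses imposed there are unnecessary. So you are not really comparing against a proof but against a reference; your proposal is an attempt to reconstruct what that reference does.

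Your implications (iii)$\Rightarrow$(ii)$\Rightarrow$(i) are fine. For (i)$\Rightarrow$(iii), however, the step ``the two graded bimodules have isomorphic tops; this forces an isomorphism in $\stmod^{\Z}A^e$'' is not a valid inference as written: two bimodules with the same top need not be isomorphic. What actually drives the argument in \cite{GSS} and \cite{H} is different. Condition~(i) first forces $A$ to be self-injective (every simple has infinite projective dimension, and one argues from there); you do not mention this, and without it $\Omega$ is not even an autoequivalence of $\stmod^{\Z}A$. Once $A$ is self-injective, $A^e$ is too, and the bimodule $\Omega^n_{A^e}(A)$ is projective on each side. The hypothesis that tensoring with it sends graded simples to graded simples then shows it induces a Morita self-equivalence of $\mod^{\Z}A$, i.e.\ it is an invertible graded bimodule; for basic $A$ this forces it to be of the form ${}_1A_\psi(a)$ by the graded Picard group description. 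That is the mechanism, not a comparison of tops. Your invocation of ``reducedness'' and separability is relevant to making this precise, but the logical skeleton you wrote down would need to be replaced by the invertibility argument just described.
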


\begin{proof}
This is \cite[2.4]{H}. The assumption in \cite{H} that $A$ is ring-indecomposable and non-semisimple is superfluous, by Proposition~\ref{elementary properties}.
\end{proof}

Beware that, unlike in the ungraded case, graded periodicity of all simple modules does not always imply the conditions in Proposition~ \ref{thm:hani-gr}(i).
Thankfully, it turns out that the two notions coincide when $A$ is ring-indecomposable.

\begin{proposition}\label{gr=ungr}
Assume that $A$ is ring-indecomposable and graded, and that $A/\rad A$ is a separable $k$-algebra. Then the following statements hold.
\begin{enumerate}[\rm(a)]
\item $A$ is twisted periodic if, and only if, $A$ is graded twisted periodic. \label{gr=ungr1}
\item $A$ is periodic if, and only if, $A$ is graded periodic. \label{gr=ungr2}
\end{enumerate}
\end{proposition}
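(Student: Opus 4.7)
The plan is to establish the two parts via different routes. The forward implications (graded implies ungraded) in both (a) and (b) are routine: I apply the forgetful functor $F\colon \mod^{\Z}A^e\to\mod A^e$ to a graded (twisted) periodicity isomorphism $\Omega_{A^e}^n(A)\simeq{}_1A_\phi(a)$ (respectively, $\Omega_{A^e}^n(A)\simeq A(a)$), using Proposition~\ref{forgetful}(c) to see that $F$ preserves minimal projective resolutions and hence syzygies, while the grading shift $(a)$ vanishes under $F$. Here $A^e$ is viewed with the total $\Z$-grading $(A^e)_n=\bigoplus_{i+j=n}A_i\otimes_k A_j^{\op}$, which makes $A$ a graded $A^e$-module.

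For the converse of (a), I would invoke Proposition~\ref{thm:hani}, which recasts ungraded twisted periodicity as \emph{simple periodicity}: $\Omega^n(S)\simeq S$ in $\stmod A$ for every simple $A$-module $S$ for a common $n$. Because simples are non-projective and syzygies in minimal projective resolutions have no projective direct summands, this isomorphism lifts from $\stmod A$ to $\mod A$. Picking a graded lift $\tilde{S}$ of each simple (which exists by Proposition~\ref{forgetful}(a)) and computing $\Omega^n(\tilde{S})$ from a minimal graded projective resolution, Proposition~\ref{forgetful}(c) implies its forgetful is $\Omega^n(S)\simeq S$. Hence $\Omega^n(\tilde{S})$ is indecomposable in $\mod^{\Z}A$ (any graded decomposition would descend to an ungraded one), and Proposition~\ref{forgetful}(e) produces integers $a_S$ with $\Omega^n(\tilde{S})\simeq\tilde{S}(a_S)$ in $\mod^{\Z}A$.

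The main obstacle is to promote these per-simple shifts $a_S$ to a single integer, which is where ring-indecomposability enters via the connectedness of the $\Ext^1$-quiver of $A$. Twisted periodic algebras are self-injective (as in Proposition~\ref{elementary properties}(b)), so $\Omega$ is an autoequivalence of $\stmod^{\Z}A$ commuting with the grading-shift functors. For simples $S,T$ with $\Ext^1(S,T)\neq0$, applying $\Omega^n$---which acts by $\tilde{S}\mapsto\tilde{S}(a_S)$ and $\tilde{T}\mapsto\tilde{T}(a_T)$---to graded extension groups produces natural isomorphisms
\begin{equation*}
  \Ext^1_{\mod^{\Z}A}(\tilde{S},\tilde{T}(k))\simeq\Ext^1_{\mod^{\Z}A}(\tilde{S},\tilde{T}(k+a_T-a_S))
\end{equation*}
for every $k\in\Z$. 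Since $\bigoplus_{k\in\Z}\Ext^1_{\mod^{\Z}A}(\tilde{S},\tilde{T}(k))=\Ext^1(S,T)$ is finite-dimensional and nonzero, invariance under shift by $a_T-a_S$ forces $a_S=a_T$. Connectedness of the $\Ext^1$-quiver then yields a common value $a$, and summing over the simple summands of the graded module $A/\rad A$ (using that $\rad A$ is homogeneous by Proposition~\ref{forgetful}(a)) gives $\Omega^n(A/\rad A)\simeq(A/\rad A)(a)$ in $\stmod^{\Z}A$, which by Proposition~\ref{thm:hani-gr} is graded twisted periodicity.

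For the converse of (b), a more direct bimodule-level argument avoids simple periodicity entirely. Ring-indecomposability of $A$ makes $\End_{A^e}(A)=Z(A)$ local, so $A$ is indecomposable in $\mod A^e$. Assuming ungraded periodicity $\Omega_{A^e}^n(A)\simeq A$ in $\mod A^e$, the graded syzygy $\Omega_{A^e}^n(A)$ forgets to this indecomposable module and is therefore itself indecomposable in $\mod^{\Z}A^e$. Proposition~\ref{forgetful}(e), applied to the graded algebra $A^e$ with its total grading, then delivers $\Omega_{A^e}^n(A)\simeq A(a)$ in $\mod^{\Z}A^e$ for some $a\in\Z$, yielding graded periodicity.
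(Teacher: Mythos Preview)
Your proof is correct and follows essentially the same approach as the paper. For part~(a) you go through simple periodicity (Propositions~\ref{thm:hani} and \ref{thm:hani-gr}) and for part~(b) you argue directly at the bimodule level via indecomposability of $A$ in $\mod^{\Z}A^e$, exactly as the paper does; the only difference is that you spell out the $\Ext^1$-quiver argument for why all the shifts $a_S$ agree, whereas the paper just asserts this follows from ring-indecomposability.
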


\begin{proof}
(a)
By Proposition~\ref{forgetful}\eqref{forget1}, each simple object in $\mod A$ is gradable.
By Proposition~\ref{forgetful}\eqref{forget3}, the forgetful functor $\mod^{\Z}A\to\mod A$ sends minimal projective resolutions in $\mod^{\Z}A$ to minimal projective resolutions in $\mod A$. Thus, by Proposition~ \ref{forgetful}(\ref{forget4},\ref{forget5}), a simple object in $\mod^{\Z}A$ is graded $\Omega$-periodic if and only if it is $\Omega$-periodic.

Clearly, $A/\rad A$ is $\Omega$-periodic if and only if each simple $A$-module is $\Omega$-periodic.
Further, we claim that $A/\rad A$ is graded $\Omega$-periodic if and only if each simple object in $\mod^{\Z}A$ is graded $\Omega$-periodic.
The ``only if'' part is clear. To prove the ``if'' part, take a common $n>0$ satisfying  $\Omega^n(S)\simeq S(a_S)$ for some $a_S\in\Z$ for each simple object $S\in\mod^{\Z}A$. 
Then, for simple $A$-modules $S$ and $T$, $\Ext^1_A(S,T)\neq0$ implies $a_S=a_T$. In fact, there exists a finite subset $\emptyset\neq I\subset\Z$ such that $\Ext^1_{\mod^{\Z}A}(S,T(i))\neq0$ holds if and only if $i\in I$. Then
\[\Ext^1_{\mod^{\Z}A}(S,T(i))\simeq\Ext^1_{\mod^{\Z}A}(\Omega^n(S),\Omega^n(T(i)))\simeq\Ext^1_{\mod^{\Z}A}(S,T(a_T-a_S+i))\]
implies $I=a_T-a_S+I$ and hence $a_S=a_T$.
Consequently ring-indecomposability of $A$ implies that $a_S=a_T$ for all simples $S,T$. Thus, $\Omega^n(A/\rad A) \simeq(A/\rad A)(a_S)$.

Consequently, $A/\rad A$ is graded $\Omega$-periodic if and only if it is $\Omega$-periodic. The desired equivalence now follows from Propositions~\ref{thm:hani} and \ref{thm:hani-gr}.

(b) Since $A$ is ring-indecomposable, it is indecomposable as an object in $\mod^{\Z}A^{\mathrm{e}}$.  Thus the assertion follows from Proposition~\ref{forgetful}(\ref{forget3},\ref{forget4},\ref{forget5}) when applying the forgetful functor $\mod^{\Z}A^{\mathrm{e}}\to\mod A^{\mathrm{e}}$ to the minimal projective resolution of $A$ in $\mod^{\Z}A^{\mathrm{e}}$.
\end{proof}

We recall the notion of complexity of a module.

\begin{definition}
Let $M$ be a $A$-module, and $(P_n)_{n\geq 0}$ a minimal projective resolution of $M$. The \emph{complexity of $M$} is defined as
\[
\cx_A(M) = \inf\{ n\in\mathbb{N}\cup\{\infty\} \mid \exists\, C\in\mathbb{N}\, \forall\,t\in\mathbb{N}: \dim_k P_t\leq Ct^{n-1}\}.
\]
\end{definition}

\begin{remark}\label{remark on cx}
Note that $\cx_A(M) = 0$ is equivalent to $\projdim M < \infty$, and that $\cx_A(M)\leq 1$ holds if and only if there is a bound on the dimensions of the terms $P_t$ in a minimal projective resolution of $M$.
In particular, any $\Omega$-periodic module has complexity one.
  Over a twisted periodic algebra, all simple modules are $\Omega$-periodic (Proposition~\ref{thm:hani}) and thus of complexity one, whence $\cx_A(M)\le1$ holds for all $M \in \mod A$ by the Horseshoe lemma.
\end{remark}

\section{Characterisations of twisted periodicity}\label{sec:periodic main}

The overall goal in this paper is to establish the implications in Figure~\ref{dgm:implications}.
The first column of this figure shows properties of an algebra $A$ of finite global dimension, whereas the second and third columns concern properties of its trivial extension algebra $T(A)$. For completeness, the (known) implications given by Proposition~\ref{thm:hani}, Proposition~\ref{thm:hani-gr} and Remark~\ref{remark on cx} are also included in the diagram.

\begin{figure}[!htbp]
\centering
\begin{tikzpicture}[every text node part/.style={align=center}] 
\tikzset{
    iff/.style={implies-implies,double equal sign distance},
    imp/.style={-implies,double equal sign distance},
  }
  
\node (v3) at (0,0) {fractionally \\ Calabi--Yau};
\node (v1) at (4.5,0) {graded \\ periodic};
\node (v2) at (9,0) {periodic};
\draw [iff] (v1) -- node [above] {\footnotesize Prop \ref{gr=ungr}\eqref{gr=ungr2}} (v2);
\draw [imp] (1.1,0.2) -- node [above] {\footnotesize Thm \ref{trivial extension is periodic}}(3.7,0.2);
\draw [imp] (3.7,-0.2) -- node [below] {\footnotesize Prop \ref{periodic to fCY}}(1.1,-0.2);

\node (v4) at (0,-3) {twisted \\ fractionally \\ Calabi--Yau};
\node (v5) at (4.5,-3) {graded \\ twisted \\ periodic};
\node (v6) at (9,-3) {twisted \\ periodic};
\draw [imp] (v4) -- node[above]{\footnotesize Prop \ref{tw-fCY to tw-pe}} (v5);
\draw [iff] (v5) -- node[above] {\footnotesize Prop \ref{gr=ungr}\eqref{gr=ungr1}} (v6);

\draw [imp] (v3) -- node [fill=white] {\footnotesize trivial} (v4); 
\draw [imp] (v1) -- node [fill=white] {\footnotesize trivial} (v5);
\draw [imp] (v2) -- node [fill=white] {\footnotesize trivial} (v6); 

\node (v7) at (4.5,-6) {graded \\ twisted \\ functorially \\ periodic};
\node (v8) at (4.5,-9) {graded \\ simple \\ periodic};
\draw [iff] (v5) -- node [left] {\footnotesize Prop \ref{thm:hani-gr}} (v7);
\draw [iff] (v7) -- node [left] {\footnotesize Prop \ref{thm:hani-gr}} (v8);

\node (v10) at (9,-6) {twisted \\ functorially  \\ periodic};
\node (v9) at (9,-9) {simple \\ periodic};
\draw [iff] (v6) -- node [right, align=left] {\footnotesize Prop \ref{thm:hani}} (v10);
\draw [iff] (v10) -- node [right, align=left] {\footnotesize Prop \ref{thm:hani}} (v9);
\draw [iff] (v8) -- node[above]{\scriptsize Proof of \\ \footnotesize Prop \ref{gr=ungr}\eqref{gr=ungr1}} (v9);

\node (v11) at (8.5,-11) {$\cx_{T(A)}({}^{\forall}M)\leq 1$};
\draw [imp] (v9) -- node[right,align=left]{\footnotesize Rem \ref{remark on cx}} +(0,-1.5);
\draw [imp] (7,-11) .. controls +(-7.5,0) and (0,-11.5) .. (0,-3.7);
\node at (1,-9.5) {\footnotesize Prop \ref{complexity one}};

\node at (0,1.5) {Properties of $A$ \\ with $\mathrm{gldim} A<\infty$};
\node at (6,1.5) { \phantom{abc}\\ Properties of the trivial extension $T(A)$};
\draw [decorate,decoration={brace,amplitude=7pt}] (-1,.7) -- (1,.7);
\draw [decorate,decoration={brace,amplitude=7pt}] (2.5,0.7) -- (10,0.7);
\end{tikzpicture}
\caption{Relations between various notions of periodicity}\label{dgm:implications}
\end{figure}
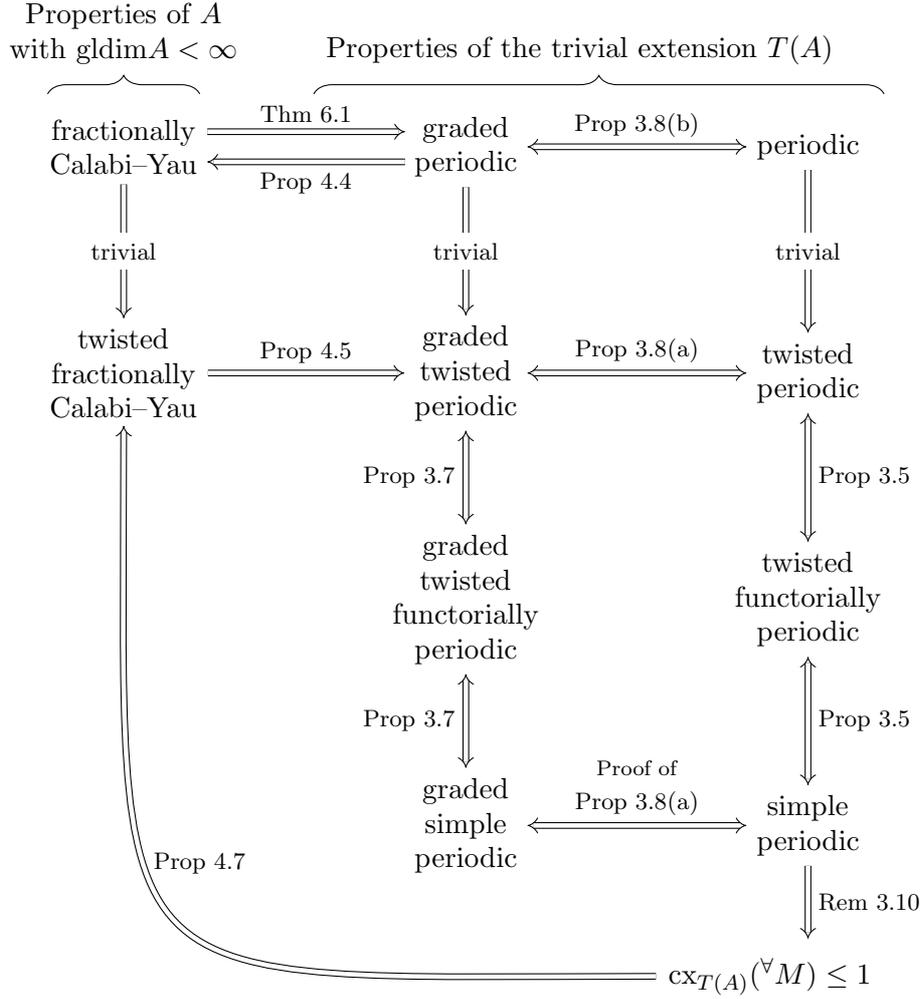

Notably, if Question~\ref{conj:periodicity} has an affirmative answer (i.e., the implication arrow $\Downarrow$ in the upper-right corner of Figure~\ref{dgm:implications} can be upgraded to an equivalence), then all the conditions in the diagram are equivalent. In particular, every \emph{twisted} fractionally Calabi--Yau  algebra will necessarily be (untwisted) fractionally Calabi--Yau -- albeit of different minimal dimension in most cases. This would resolve a question in \cite[Remark 1.6(b)]{HI}.

In this section, we shall prove some implications between the twisted fractional Calabi--Yau property of $A$ and different notions of twisted periodicity for $T(A)$. These results, summarized in Theorem~\ref{main theorem 2 again} below, imply that all the ``twisted'' notions in our setting are, in fact, equivalent. We also give one result, Proposition~\ref{periodic to fCY}, about properties without twist: if $T(A)$ is graded periodic then $A$ is fractionally Calabi--Yau.

We need the following key notion before stating~Theorem \ref{main theorem 2 again}.

\begin{definition}\label{def:CT}
Let $A$ be a finite dimensional algebra over a field $k$, and $d$ a positive integer. An $A$-module $M$ is said to be \emph{$d$-cluster-tilting} if
\begin{align*}
  \add M
  &= \{ X \in \mod A \mid \Ext_{A}^i(X,M)=0, \ \forall\, 0 < i < d \}\\
  &= \{ X \in \mod A \mid \Ext_{A}^i(M,X)=0, \ \forall\, 0 < i < d \}.
\end{align*}
The algebra $A$ is said to be \emph{$d$-representation-finite}\footnote{Notice that it was called \emph{weakly $d$-representation-finite} in \cite{HI,IO} and elsewhere.} if it has a $d$-cluster-tilting module $M$. In this case $\End_A(M)$ (respectively, $\underline{\End}_A(M)$) is a \emph{$d$-Auslander algebra} (respectively, \emph{stable $d$-Auslander algebra}) of $A$.
\end{definition}

We are ready to state the main result of this section.

\begin{theorem}\label{main theorem 2 again}
Let $A$ be a finite-dimensional algebra over a field $k$ such that $A/\rad A$ is a separable $k$-algebra. The following conditions are equivalent.
\begin{enumerate}[\rm(i)]
\item $T(A)$ is twisted periodic.
\item $T(A)$ is graded twisted periodic.
\item Every $T(A)$-module has complexity at most one.
\item There exist integers $d,r\ge1$ such that $T_r(A)$ is $d$-representation-finite.
\item $A$ has finite global dimension and is twisted fractionally Calabi--Yau.
\end{enumerate}
\end{theorem}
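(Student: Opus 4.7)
My plan is to prove Theorem~\ref{main theorem 2 again} via a circular chain (v)$\Rightarrow$(ii)$\Leftrightarrow$(i)$\Rightarrow$(iii)$\Rightarrow$(v), supplemented by the separate equivalence (iv)$\Leftrightarrow$(v).

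The implication (v)$\Rightarrow$(ii) rests on the Happel equivalence $\Db(\mod A)\simeq\stmod^{\Z}T(A)$ from \eqref{happel} together with diagram \eqref{serre}, which identifies the Serre functor $\nu$ with $\Omega\circ(1)$. Assuming $\nu^{\ell}\simeq[m]\circ\phi^{*}$ for some $\phi\in\Aut_k(A)$, and extending $\phi$ canonically to a graded automorphism $\tilde{\phi}$ of $T(A)=A\oplus DA$ by letting it act on $DA$ via the dual of $\phi^{-1}$, a direct computation using $[1]\simeq\Omega^{-1}$ on $\stmod^{\Z}T(A)$ yields the graded twisted functorial periodicity $\Omega^{\ell+m}\simeq\tilde{\phi}^{*}\circ(-\ell)$, and Proposition~\ref{thm:hani-gr} then supplies (ii). The equivalence (ii)$\Leftrightarrow$(i) is Proposition~\ref{gr=ungr}\eqref{gr=ungr1} after reducing to the ring-indecomposable case via Proposition~\ref{elementary properties}(a) and the obvious isomorphism $T(A\times B)\simeq T(A)\times T(B)$. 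For (i)$\Rightarrow$(iii), Proposition~\ref{thm:hani} gives a dimension-preserving isomorphism of autoequivalences $\Omega^{n}\simeq\phi^{*}$ of $\stmod T(A)$; the terms of any minimal projective resolution thus have dimensions uniformly bounded by $\max_{0\le i<n}\dim_k\Omega^{i}(M)$, giving complexity at most one (Remark~\ref{remark on cx}).

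The main obstacle is (iii)$\Rightarrow$(v). From complexity one, self-injectivity of $T(A)$ and a pigeonhole argument on isomorphism classes of syzygies force every simple $T(A)$-module to be $\Omega$-periodic, whence $T(A)$ is twisted periodic by Proposition~\ref{thm:hani}; passing to the graded setting via Propositions~\ref{gr=ungr} and \ref{thm:hani-gr}, one obtains graded twisted functorial periodicity $\Omega^{n}\simeq\tilde{\phi}^{*}\circ(a)$ on $\stmod^{\Z}T(A)$. The delicate step is to deduce $\gldim A<\infty$: I plan to exploit the $\Z$-grading on $T(A)$ by observing that simple $A$-modules lift to simple objects in $\mod^{\Z}T(A)$ concentrated in a single degree, and to use the repetitive category $\widehat{A}=\proj^{\Z}T(A)$ to translate graded $T(A)$-syzygies into $A$-syzygies; the grading shift in the periodicity relation should prevent the $A$-projective resolutions from being eventually trapped among modules with non-trivial $DA$-action, forcing them to terminate. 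Once $\gldim A<\infty$ is secured, the Happel equivalence becomes available and the argument of the first step can be inverted, transporting $\Omega^{n}\simeq\tilde{\phi}^{*}\circ(a)$ to an isomorphism $\nu^{\ell}\simeq[m]\circ\phi^{*}$ on $\Db(\mod A)$ and yielding the twisted fractional Calabi--Yau property.

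Finally, (iv)$\Leftrightarrow$(v) invokes the results of \cite{DI}: the direction (v)$\Rightarrow$(iv) constructs a $d$-cluster-tilting module over a suitable $T_{r}(A)$ from the twisted Calabi--Yau datum on $A$, while (iv)$\Rightarrow$(v) -- precisely \cite[Question~6.1(2)]{DI} -- extracts a twist and Calabi--Yau dimension from the $d$-representation-finite structure of $T_{r}(A)$, using the description of the $d$-cluster-tilting subcategory in the Happel/derived picture. The deepest technical hurdle remains the proof that $\gldim A<\infty$ in (iii)$\Rightarrow$(v); the $\Z$-grading on $T(A)$ is the decisive lever that breaks the symmetry between $A$ and its bimodule summand $DA$, and making this lever quantitative is the crux of the argument.
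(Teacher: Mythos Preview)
Your overall circular scheme matches the paper's, and the steps (v)$\Rightarrow$(ii), (ii)$\Leftrightarrow$(i), (i)$\Rightarrow$(iii) are essentially the same as in Propositions~\ref{tw-fCY to tw-pe}, \ref{gr=ungr} and Remark~\ref{remark on cx}. The genuine gap is in your implication (iii)$\Rightarrow$(v), specifically the sentence ``a pigeonhole argument on isomorphism classes of syzygies force every simple $T(A)$-module to be $\Omega$-periodic''. Bounded dimension of the syzygies does \emph{not} by itself imply that only finitely many isomorphism classes occur among them: for a finite-dimensional algebra there are in general infinitely many isomorphism classes of modules of a given dimension, and the syzygies $\Omega^i(S)$ carry no a priori rigidity that would cut this down. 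This is precisely why the implication (iii)$\Rightarrow$(i) is non-trivial (the paper even notes that Dugas proved it independently in \cite{Du1}). The paper's Proposition~\ref{complexity one} circumvents this entirely: it first proves $\gldim A<\infty$ directly from the complexity hypothesis by analysing the degree-$n$ part of the graded minimal resolution of a simple and invoking the uniform finitistic-dimension bound of Proposition~\ref{FDC}; then, rather than working with simples, it applies the pigeonhole argument to the module $A\in\mod T_2(A)$, where one does have $\Ext^1_{T_2(A)}(A,A)=0$, so Voigt's lemma (Proposition~\ref{voigt}) guarantees only finitely many isomorphism classes among the syzygies and hence $\Omega^n_{T_2(A)}(A)\simeq A$. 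Your proposed route ``simples periodic $\Rightarrow$ twisted periodic $\Rightarrow$ graded twisted periodic $\Rightarrow$ $\gldim A<\infty$'' would work once the first step is secured, but that first step needs the rigidity input you are missing.

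For (iv), your plan to prove (iv)$\Rightarrow$(v) directly ``using the description of the $d$-cluster-tilting subcategory in the Happel/derived picture'' is circular: at that point you do not yet know $\gldim A<\infty$, so there is no Happel equivalence to invoke. The paper instead routes (iv)$\Rightarrow$(iii) via \cite[Theorem~1.1]{EH}, which shows that a $d$-representation-finite self-injective algebra has all modules of complexity at most one; since the forgetful functor $\mod T_r(A)\to\mod T(A)$ preserves complexity, this yields (iii) for $T(A)$, and then (iii)$\Rightarrow$(v) closes the loop. The direction (v)$\Rightarrow$(iv) is indeed Proposition~\ref{DI2.9} from \cite{DI}, as you say.
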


We remark that the implication (iii) $\Rightarrow$ (i) in Theorem~\ref{main theorem 2 again} has been proved independently by Dugas in the preprint \cite{Du1}, for an arbitrary self-injective algebra $B$ over an algebraically closed field. 

As a consequence of Theorem~\ref{main theorem 2 again}, we can answer a question in \cite[Remark 1.6(c)]{HI}.

\begin{corollary}\label{derived closed}
Let $A$ and $B$ be derived equivalent finite-dimensional algebras of finite global dimension such that $A/\rad A$ and $B/\rad B$ are separable $k$-algebras.
Then $A$ is twisted fractionally Calabi--Yau if and only if so is $B$.
\end{corollary}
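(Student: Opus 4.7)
The plan is to apply Theorem~\ref{main theorem 2 again} to both $A$ and $B$, and transfer the relevant equivalent condition across a derived equivalence of their trivial extension algebras.

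First, since finite global dimension is itself a derived invariant, the hypothesis on $A$ implies the same for $B$. By the equivalence (iii) $\Leftrightarrow$ (v) in Theorem~\ref{main theorem 2 again}, an algebra $C$ of finite global dimension with $C/\rad C$ separable over $k$ is twisted fractionally Calabi--Yau if and only if every $T(C)$-module has complexity at most one. It therefore suffices to show that this latter condition is invariant under the given derived equivalence.

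To this end, I would invoke Rickard's theorem asserting that a derived equivalence $\Db(\mod A)\simeq \Db(\mod B)$ lifts to a derived equivalence $\Db(\mod T(A))\simeq \Db(\mod T(B))$ of the trivial extensions. Since $T(A)$ and $T(B)$ are symmetric, and in particular self-injective, this derived equivalence further restricts (again by Rickard) to a triangle equivalence of stable module categories $\stmod T(A) \simeq \stmod T(B)$. The complexity of a module over a self-injective algebra is a stable-equivalence invariant -- this can be seen either through the description of complexity as the growth rate of stable Ext-groups against the semisimple quotient, or via Erdmann--Solberg support variety theory. Hence the condition ``every $T(A)$-module has complexity at most one'' holds if and only if the analogous statement for $T(B)$ does, and a final application of Theorem~\ref{main theorem 2 again} to $B$ delivers the claimed equivalence.

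The main subtle point will be the invariance of complexity under a stable equivalence of self-injective algebras. While well-established, this is not entirely formal: under a stable equivalence $F$ the image $F(A/\rad A)$ need not be isomorphic to $B/\rad B$, so some structural characterisation of complexity that is independent of the chosen test object is required. Everything else is standard derived/stable equivalence machinery combined with the reduction supplied by Theorem~\ref{main theorem 2 again}; once complexity-invariance is granted, the corollary follows immediately.
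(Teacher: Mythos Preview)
Your proposal is correct and follows essentially the same route as the paper: reduce via Theorem~\ref{main theorem 2 again} to the complexity condition on $T(A)$, pass through Rickard's derived/stable equivalence of trivial extensions, and transport complexity $\le 1$ across the stable equivalence. The paper cites \cite[3.1, 2.2]{Ric} for the stable equivalence and \cite[Theorem~4.5]{Pu} for the invariance of complexity under stable equivalence, which is exactly the subtle point you flagged; note also that finite global dimension of $B$ is already part of the hypotheses, so your opening remark about its derived invariance is unnecessary.
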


\begin{proof}
Suppose $A$ is twisted fractionally Calabi--Yau.  By Theorem \ref{main theorem 2 again}, we have $\cx_{T(A)}(M)\leq 1$ for every $T(A)$-module $M$.

By Rickard's results \cite[3.1, 2.2]{Ric}, we have a stable equivalence $F:\stmod T(B)\xrightarrow{\sim} \stmod T(A)$.  It then follows from \cite[Theorem 4.5]{Pu}, which says the complexity of a module is preserved under stable equivalence, that $\cx_{T(B)}(N)=\cx_{T(A)}(F(N))\leq 1$ for any  $T(B)$-module $N$.  By Theorem \ref{main theorem 2 again}, this means that $B$ is also twisted fractionally Calabi--Yau.
\end{proof}

Happel's triangle equivalence \eqref{happel} provides a way to translate periodicity properties of $T(A)$ into properties of the derived category $\Db(\mod A)$ of $A$. As can be seen from the commutative diagram \eqref{serre}, the grading shift functor $(1):\stmod^{\Z} T(A)\to \stmod^{\Z} T(A)$ corresponds under \eqref{happel} to the autoequivalence $\nu\circ[1]$ of $\Db(\mod A)$. This immediately leads to the following observation.

\begin{proposition}\label{periodic to fCY}
Assume that $A$ has finite global dimension. Given $m,\ell\in\Z$ with $\ell>0$, the following are equivalent.
\begin{enumerate}[\rm(a)]
\item
  The algebra $A$ is $\frac{m}{\ell}$-Calabi--Yau.
\item
There is an isomorphism of functors $\Omega^{\ell+m}\simeq  (-\ell)$ on $\stmod^{\Z}T(A)$.
\end{enumerate}
In particular, if $T(A)$ is (graded) periodic, then $A$ is fractionally Calabi--Yau.
\end{proposition}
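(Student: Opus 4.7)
The plan is to translate the fractional Calabi--Yau condition through Happel's triangle equivalence \eqref{happel}. Under this equivalence, the commutative square \eqref{serre} tells us that the Serre functor $\nu$ on $\Db(\mod A)$ corresponds to $\Omega \circ (1)$ on $\stmod^{\Z} T(A)$. Since $\Omega$ is the inverse suspension functor on $\stmod^{\Z} T(A)$, the shift $[1]$ corresponds to $\Omega^{-1}$.

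For the equivalence (a) $\Leftrightarrow$ (b), I first observe that the grading shift $(1)$ is an exact autoequivalence of $\mod^{\Z} T(A)$ preserving projectives, hence commutes with $\Omega$. Therefore the relation $\nu^\ell \simeq [m]$ translates via \eqref{serre} to
\[
(\Omega \circ (1))^\ell \simeq \Omega^{-m},
\]
which becomes $\Omega^\ell \circ (\ell) \simeq \Omega^{-m}$, and then, after composing with $\Omega^{-\ell}$ and $(-\ell)$, is equivalent to $\Omega^{\ell+m} \simeq (-\ell)$ on $\stmod^{\Z} T(A)$.

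For the ``in particular'' claim, I would first reduce the (ungraded) periodicity of $T(A)$ to graded periodicity by decomposing $A$ into ring-indecomposable factors and applying Proposition~\ref{gr=ungr}\eqref{gr=ungr2} to each (using that $T$ respects products, so the same period $n$ and shift $a$ work across the factors). Given graded periodicity, $\Omega^n_{T(A)^e}(T(A)) \simeq T(A)(a)$ in $\stmod^{\Z} T(A)^e$ for some $n > 0$ and $a \in \Z$, and a standard tensor argument yields a functorial isomorphism $\Omega^n \simeq (a)$ on $\stmod^{\Z} T(A)$ (the bimodule isomorphism carries no automorphism twist, so none appears at the functor level). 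If $a = 0$, Happel's equivalence converts this to $[-n] \simeq \mathrm{id}$ on $\Db(\mod A)$, which forces $A = 0$ since $\gldim A < \infty$; this is trivially Calabi--Yau. Otherwise, pick $\varepsilon \in \{\pm 1\}$ with $\varepsilon a < 0$ and iterate to get $\Omega^{\varepsilon n} \simeq (\varepsilon a)$; setting $\ell := -\varepsilon a > 0$ and $m := \varepsilon n - \ell \in \Z$ yields $\Omega^{\ell+m} \simeq (-\ell)$, whence $A$ is $\frac{m}{\ell}$-Calabi--Yau by (b).

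The main subtlety is the sign bookkeeping at the end of the corollary (choosing $\varepsilon$ so that $\ell > 0$ and ruling out the degenerate case $a = 0$); the remainder of the argument is a direct categorical translation along Happel's equivalence.
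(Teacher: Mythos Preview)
Your argument for (a)$\Leftrightarrow$(b) is exactly the paper's: translate $\nu^\ell\simeq[m]$ across Happel's equivalence via \eqref{serre}. For the ``in particular'' clause restricted to \emph{graded} periodicity, you also match the paper --- deduce $\Omega^n\simeq(a)$ on $\stmod^{\Z}T(A)$ from the bimodule isomorphism by tensoring --- and your explicit sign bookkeeping (ruling out $a=0$, choosing $\varepsilon$ to force $\ell>0$) is more careful than the paper, which simply asserts that one lands in condition~(b).

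There is, however, a genuine gap in your reduction from ungraded to graded periodicity. After applying Proposition~\ref{gr=ungr}\eqref{gr=ungr2} to each ring-indecomposable factor $T(A_i)$ you obtain $\Omega^{n_i}\simeq(a_i)$ on $\stmod^{\Z}T(A_i)$, but the claim that ``the same period $n$ and shift $a$ work across the factors'' is unjustified, and in fact false. For $A=kQ_1\times kQ_2$ with $Q_1,Q_2$ of Dynkin types $A_2$ and $A_3$ one gets $\Omega^4\simeq(-3)$ and $\Omega^6\simeq(-4)$ on the two factors, and no common pair $(n,a)$ exists even after passing to a common period. Indeed $T(A)$ is periodic here while $A$ is \emph{not} fractionally Calabi--Yau, since the factors have distinct Calabi--Yau ratios $\tfrac{1}{3}$ and $\tfrac{1}{2}$. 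So the ungraded form of the ``in particular'' assertion genuinely requires $A$ to be ring-indecomposable --- a subtlety the paper itself does not make explicit at this point.
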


\begin{proof}
  By \eqref{serre}, we have $\nu^\ell\circ[-m]\simeq \Omega^{\ell+m}\circ(\ell)$. Thus (a) is equivalent to (b). Moreover, graded periodicity of $T(A)$ implies the condition (b) (cf.\ Proposition \ref{thm:hani-gr}(iii)$\Rightarrow$(ii)).
  In fact, any isomorphism $\Omega_{T(A)^{\mathrm{e}}}^{n}(T(A))\simeq T(A)(a)$ in $\mod^{\Z}(T(A)^{\mathrm{e}})$ gives natural isomorphisms 
  \[\Omega_{T(A)}^{n} \simeq  -\otimes_{T(A)}\Omega_{T(A)^{\mathrm{e}}}^{n}(T(A)) \simeq -\otimes_{T(A)}T(A)(a) = (a)\ \mbox{ on }\ \stmod^\Z T(A).\qedhere\]
\end{proof}

The following twisted version of Proposition~\ref{periodic to fCY} gives the implication (v)$\Rightarrow$(ii) in Theorem~\ref{main theorem 2 again}.

\begin{proposition}\label{tw-fCY to tw-pe}
Assume that $A$ has finite global dimension.
If $A$ is twisted $\frac{m}{\ell}$-Calabi--Yau, then there exists an automorphism $\phi\in\Aut_k^{\Z}(T(A))$ such that
\[\Omega^{m+\ell}\simeq\phi^*\circ(-\ell)\ \mbox{ on }\ \stmod^\Z T(A).\]
\end{proposition}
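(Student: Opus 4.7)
My plan is to transport the twisted Calabi--Yau isomorphism across Happel's equivalence \eqref{happel}, $\Db(\mod A)\simeq\stmod^{\Z} T(A)$, to produce a graded twisted functorial periodicity of $T(A)$. By Proposition~\ref{characterise CY}(a), the hypothesis supplies a triangle-functorial isomorphism $\nu^\ell\simeq[m]\circ\phi^*$ on $\Db(\mod A)$ for some $\phi\in\Aut_k(A)$. The Serre-functor compatibility \eqref{serre} identifies $\nu$ under Happel with $\Omega\circ(1)$ and the shift $[1]$ with $\Omega^{-1}$; since the grading shift $(1)$ preserves projective covers, it commutes with $\Omega$, so $\nu^\ell$ corresponds to $\Omega^\ell\circ(\ell)$. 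Writing $F$ for the autoequivalence of $\stmod^{\Z} T(A)$ corresponding to $\phi^*$, the Calabi--Yau isomorphism then translates to
\[
\Omega^{m+\ell}\simeq F\circ(-\ell)\quad\text{on }\stmod^{\Z} T(A).
\]

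Next I would realise $F$ (possibly after taking a suitable power) as the pull-back along a graded automorphism of $T(A)$. Any $\phi\in\Aut_k(A)$ extends canonically to $\tilde\phi\in\Aut_k^{\Z}(T(A))$ via $(a,f)\mapsto(\phi(a),f\circ\phi^{-1})$ on $T(A)=A\oplus DA$; one checks this is a well-defined graded algebra automorphism using the compatibility of $\phi$ with the $A$-bimodule structure on $DA$. When $A$ is basic, Proposition~\ref{Out Pic} supplies the isomorphism $\Pic_k(-)\simeq\Out_k(-)$ for both $A$ and $T(A)$, from which one verifies that Happel sends the invertible bimodule ${}_1A_\phi$ to ${}_1T(A)_{\tilde\phi}$ up to an inner twist, whence $F\simeq\tilde\phi^*$ on $\stmod^{\Z} T(A)$. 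Combined with the displayed relation this yields $\Omega^{m+\ell}\simeq\tilde\phi^*\circ(-\ell)$, giving the statement with $a=1$.

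For general $A$, I would instead pass to the object-level consequence. Evaluating the displayed functorial relation on $T(A)/\rad T(A)$ viewed in degree $0$, and exploiting that $F$ arises from $\phi^*$, which preserves $\mod A\subset\Db(\mod A)$ and satisfies $\phi^*(A/\rad A)\simeq A/\rad A$, Happel transports this to $F(T(A)/\rad T(A))\simeq T(A)/\rad T(A)$ in $\stmod^{\Z} T(A)$. This yields the graded simple periodicity
\[
\Omega^{m+\ell}(T(A)/\rad T(A))\simeq(T(A)/\rad T(A))(-\ell)\quad\text{in }\stmod^{\Z} T(A),
\]
and Proposition~\ref{thm:hani-gr} then promotes it to a functorial relation $\Omega^{m+\ell}\simeq\tilde\phi^*\circ(-\ell)$ for some $\tilde\phi\in\Aut_k^{\Z}(T(A))$. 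The main obstacle lies in the non-basic case: ensuring that the iso $\phi^*(A/\rad A)\simeq A/\rad A$, induced by a possibly nontrivial permutation of simples of equal multiplicity, can be arranged to respect the grading rather than only the ungraded module structure. The flexibility of the exponent $a\ge1$ in the statement provides the slack needed to pass to a power of $\phi$, trivialising the permutation, should a direct grading-preserving identification be unavailable.
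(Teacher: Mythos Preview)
Your general-case argument is essentially the paper's proof: transport the twisted Calabi--Yau relation through Happel's equivalence and \eqref{serre} to obtain $\Omega^{m+\ell}\simeq F\circ(-\ell)$, evaluate on $X=T(A)/\rad T(A)$ to get graded simple periodicity, and then invoke Proposition~\ref{thm:hani-gr}. The paper uses exactly this route for \emph{both} the basic and the non-basic case, simply observing that when $A$ is basic the permutation of simples induced by $\psi^*$ already fixes the multiplicity-free sum $X$, so $a=1$ suffices.

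Your separate treatment of the basic case---extending $\phi$ to $\tilde\phi\in\Aut_k^{\Z}(T(A))$ and arguing via $\Pic_k\simeq\Out_k$ that Happel intertwines $\phi^*$ with $\tilde\phi^*$---is an unnecessary detour, and the intertwining claim is not something Proposition~\ref{Out Pic} provides; it would require a separate compatibility check for Happel's functor that you do not carry out. Your worry about ``respecting the grading'' in the non-basic case is also unfounded: the isomorphism $\phi^*(A/\rad A)\simeq A/\rad A$ holds in $\mod A$ (indeed $\phi^*(A)\simeq A$ as right $A$-modules via the map $\phi$ itself), and under Happel this becomes a graded isomorphism in $\stmod^{\Z}T(A)$ automatically. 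In fact this last observation shows that your argument already gives $a=1$ unconditionally; the paper instead reasons via the permutation action on isomorphism classes of simples and passes to a power, hence the weaker statement for non-basic $A$.
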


\begin{proof}
By \eqref{serre}, we have $\psi^*\simeq\nu^\ell\circ[-m]\simeq \Omega^{\ell+m}\circ(\ell)$ for some automorphism $\psi\in \Aut_k(A)$ of $A$.
The simple $A$-modules correspond under \eqref{happel} to the simple $T(A)$-modules concentrated in degree $0$.
Let $X:=A/\rad A=T(A)/\rad T(A)$. Then $\psi^*(X)\simeq X$ holds, and
the above implies that $\Omega^{\ell+m}(X) \simeq X(-\ell)$ in $\stmod^{\Z} T(A)$.
The claim now follows from Proposition \ref{thm:hani-gr}.
\end{proof}

Next we observe the following simple fact.

\begin{proposition}\label{fin gl}
  If $T(A)$ is (graded) twisted periodic, then $\gldim A$ is finite.
\end{proposition}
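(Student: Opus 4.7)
The plan is to analyse the degree-shift $a$ appearing in the graded twisted periodicity isomorphism for the bimodule.  After reducing to ring-indecomposable factors and invoking Proposition~\ref{gr=ungr} to pass to the graded setting, we may assume
\[ \Omega^n_{T(A)^e}(T(A)) \simeq {}_1 T(A)_\phi(a) \quad \text{in } \stmod^\Z T(A)^e \]
for some $n \geq 1$, $a \in \Z$, and graded $k$-algebra automorphism $\phi$ of $T(A)$.  Tensoring with a simple $A$-module $S$ placed in degree $0$ gives $\Omega^n_{T(A)}(S) \simeq S'(a)$ in $\stmod^\Z T(A)$, where $S' := \phi_A^*(S)$ with $\phi_A := \phi|_A$ is a simple concentrated in degree $0$.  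As neither module has graded projective summands (granting $A \neq 0$), this stable isomorphism lifts to $\mod^\Z T(A)$.

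The key computation is that the degree-$0$ component of $\Omega^n_{T(A)}(S)$ equals the $n$-th syzygy $\Omega^n_A(S)$ in the minimal $A$-projective resolution of $S$.  Indeed, a straightforward induction shows that the minimal graded projective resolution of $S$ stays in non-negative degrees, so each graded projective cover $Q_i$ decomposes as a sum of $eT(A)(-d)$ with $d \ge 0$; only the $d = 0$ summands contribute to degree $0$, and their degree-$0$ parts $eA$ are precisely the $A$-projective covers needed to build the minimal $A$-resolution step by step.  Now if $a \neq 0$, the degree-$0$ part of $S'(a)$ vanishes, so $\Omega^n_A(S) = 0$ for every simple $S$, and hence $\gldim A < n$.

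The main obstacle is ruling out the case $a = 0$.  For this, I would invoke Happel's triangle embedding $F \colon \Db(\mod A) \hookrightarrow \stmod^\Z T(A)$, which is fully faithful regardless of the global dimension of $A$ \cite{Hap} and places each $A$-module in degree $0$, intertwining $[-1]$ on $\Db(\mod A)$ with $\Omega$ on $\stmod^\Z T(A)$.  Since $\phi$ preserves the grading, it induces an automorphism $\phi_A$ of $A$ and satisfies $\phi^* \circ F \simeq F \circ \phi_A^*$.  If $a = 0$, then $\Omega^n \simeq \phi^*$ on $\stmod^\Z T(A)$, and therefore $F(M[-n]) \simeq F(\phi_A^*(M))$ for all $M \in \mod A$.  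Fully faithfulness of $F$ yields $M[-n] \simeq \phi_A^*(M)$ in $\Db(\mod A)$; comparing $H^0$ for any nonzero $M$ with $n > 0$ produces a contradiction, so $a \neq 0$, and the previous step gives $\gldim A < \infty$.
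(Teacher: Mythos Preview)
Your argument is correct, and the overall structure---reduce to graded twisted periodicity, pass to simples, identify the degree-$0$ part of $\Omega^n_{T(A)}(S)$ with $\Omega^n_A(S)$, and conclude once $a\neq 0$---matches the paper's.  The genuine difference is in how you exclude $a=0$.  The paper does this by a one-line observation about the grading: every indecomposable graded projective $eT(A)$ has its (simple) socle in degree~$1$ (since $DA$ is a faithful $A$-bimodule), so for any $i>0$ the syzygy $\Omega^i_{T(A)}(S)$, being a nonzero submodule of a projective generated in non-negative degrees, has $\Omega^i_{T(A)}(S)_{>0}\neq 0$.  Hence $S(a)_{>0}\neq 0$, forcing $a<0$ directly.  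Your route instead invokes Happel's fully faithful embedding $\Db(\mod A)\hookrightarrow\stmod^{\Z}T(A)$ and the compatibility $\phi^*\circ F\simeq F\circ\phi_A^*$ to transport the hypothetical isomorphism $\Omega^n\simeq\phi^*$ back to $\Db(\mod A)$, where it is visibly absurd.

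Both arguments are valid.  The paper's is more elementary and self-contained (no derived-category input beyond the graded structure of $T(A)$), and it actually yields the sharper conclusion $a<0$.  Your approach imports heavier machinery but has the virtue of being conceptually clean: it explains \emph{why} $a=0$ fails in terms of the triangulated structure, rather than via a socle computation.  A small efficiency note: your use of the bimodule isomorphism plus tensoring is equivalent to what the paper gets from Proposition~\ref{thm:hani-gr}; you could also invoke that proposition directly to land at $\Omega^n(S)\simeq S'(a)$ without the tensoring step.
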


\begin{proof}
Let $S$ be a simple $A$-module. By Proposition~\ref{thm:hani-gr}, there exist $m\in\Z_{>0}$ and $a\in\Z$ such that $\Omega_{T(A)}^m(S)\simeq S(a)$. Clearly, $\Omega_{T(A)}^i(S)_{>0}\ne0$ for all $i>0$, implying that $a<0$ and, consequently,
$\Omega_A^m(S) = \Omega_{T(A)}^m(S)_0 = 0$.
Thus $A$ has finite global dimension.
\end{proof}

The following result closes the circuit of implications in the lower part of Figure~\ref{dgm:implications}, thus establishing the equivalence between the twisted fractional Calabi--Yau property of $A$ and twisted periodicity of $T(A)$.

\begin{proposition}\label{complexity one}
Assume that $A$ is ring-indecomposable. If each $T(A)$-module has complexity at most one, then $A$ has finite global dimension and is twisted fractionally Calabi--Yau.
\end{proposition}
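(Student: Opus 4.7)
The plan is to use the complexity hypothesis to show that $T(A)$ is graded twisted periodic, then transfer this periodicity across Happel's triangle equivalence \eqref{happel} to obtain the twisted fractional Calabi--Yau property of $A$.

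First, every simple $T(A)$-module is non-projective, since $T(A)$ has nonzero radical containing $DA$. By hypothesis, all such simples have complexity at most one. A standard consequence of Brauer--Thrall~I, applied in the stable category of the self-injective algebra $T(A)$, is that a non-projective simple of complexity at most one must be $\Omega$-periodic: the syzygies $\Omega^n S$ form an infinite sequence of nonzero modules of bounded $k$-dimension, hence fall into finitely many isomorphism classes, so $\Omega^m S \simeq \Omega^{m+p} S$ in $\stmod T(A)$ for some $m \geq 0$ and $p > 0$, whence $S \simeq \Omega^p S$ on applying $\Omega^{-m}$. Given the separability of $A/\rad A = T(A)/\rad T(A)$ (in force throughout Theorem~\ref{main theorem 2 again}), Proposition~\ref{thm:hani} now yields that $T(A)$ is twisted periodic. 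Since $A$, and hence $T(A)$, is ring-indecomposable, Proposition~\ref{gr=ungr}\eqref{gr=ungr1} upgrades this to graded twisted periodicity.

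Thus there exist $n > 0$, $a \in \Z$, and a graded automorphism $\phi \in \Aut_k^{\Z}(T(A))$ such that $\Omega^n \simeq \phi^* \circ (a)$ as autoequivalences of $\stmod^{\Z} T(A)$. The argument in the proof of Proposition~\ref{fin gl} shows, using the positivity of the grading, that $a < 0$, and simultaneously that $\gldim A < \infty$. To finish, I transfer back via Happel's equivalence $\Db(\mod A) \simeq \stmod^{\Z} T(A)$, under which the commutative diagram \eqref{serre} reads $\nu \simeq \Omega \circ (1)$. Writing $\ell := -a > 0$ and using that the functors $(\ell)$, $\Omega$, and $\phi^*$ mutually commute (as $\phi$ is graded), the relation $\Omega^n \simeq \phi^* \circ (a)$ rearranges to $(\ell) \simeq \Omega^{-n} \circ \phi^*$, whence
\[
\nu^{\ell} \simeq \Omega^{\ell} \circ (\ell) \simeq \Omega^{\ell-n} \circ \phi^* = [n-\ell] \circ \phi^*
\]
on $\Db(\mod A)$, where the last equality uses the identification $\Omega = [-1]$ on this triangulated category. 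Noting that $\phi^*$ on $\Db(\mod A)$ is induced by the algebra automorphism $\phi|_A$ of $A$, we conclude that $A$ is twisted $\tfrac{n-\ell}{\ell}$-Calabi--Yau.

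The main technical step is the first one: verifying that a non-projective simple of complexity one over the self-injective algebra $T(A)$ is $\Omega$-periodic. The remainder is essentially the converse of the computation in the proof of Proposition~\ref{tw-fCY to tw-pe}, now read from $T(A)$ back to $A$.
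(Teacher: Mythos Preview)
Your approach has a genuine gap in the first step. You claim that a non-projective module $S$ of complexity at most one over the self-injective algebra $T(A)$ must be $\Omega$-periodic, and you justify this by saying that the syzygies $\Omega^n S$ have bounded dimension and hence ``fall into finitely many isomorphism classes''. But bounded dimension does \emph{not} imply finitely many isomorphism classes: already the Kronecker algebra has a one-parameter family of pairwise non-isomorphic indecomposable modules of dimension two. Brauer--Thrall~I (Roiter's theorem) asserts that if \emph{all} indecomposable modules over an algebra have bounded dimension then the algebra has finite representation type; it says nothing about finiteness of indecomposables in a fixed dimension when the algebra is of infinite type, which $T(A)$ typically is. So the inference ``bounded syzygies $\Rightarrow$ periodic'' is unjustified, and with it the whole chain (iii)$\Rightarrow$(i)$\Rightarrow$(v) that you propose.

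The paper circumvents this by an entirely different route. Rather than simple $T(A)$-modules, it considers $A$ as an object of $\mod T_2(A)\simeq\mod^{\Z/2\Z}T(A)$ and observes that $\Ext^1_{T_2(A)}(A,A)=0$ for grading reasons (the kernel $DA$ of $T(A)\twoheadrightarrow A$ sits in degree~$1$). This rigidity passes to every syzygy $\Omega^i_{T_2(A)}(A)$, and now Voigt's lemma (Proposition~\ref{voigt}) supplies the missing finiteness: in each fixed dimension there are only finitely many modules with vanishing self-$\Ext^1$. Combined with the bounded-dimension hypothesis this forces $\Omega$-periodicity of $A$ in $\mod^{\Z/2\Z}T(A)$, after which a computation with Happel's equivalence, similar in spirit to your final paragraph, yields the twisted fractional Calabi--Yau property. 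The finiteness of $\gldim A$ is also established independently, via a finitistic-dimension bound (Proposition~\ref{FDC}), not deduced from twisted periodicity. The rigidity-plus-Voigt argument is exactly the idea your proposal lacks; indeed, the implication ``complexity~$\le 1$ for all modules $\Rightarrow$ twisted periodic'' for general self-injective algebras is itself a nontrivial theorem (cf.\ the remark on Dugas~\cite{Du1} after Theorem~\ref{main theorem 2 again}), not a corollary of Brauer--Thrall.
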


For the proof of Proposition~\ref{complexity one}, we need the following well-known result.

\begin{proposition}[{e.g.\ \cite[Corollary 9]{HS}}]\label{voigt}
For each $n\ge1$, there are only finitely many isomorphism classes of $A$-modules $X$ satisfying $\dim_kX=n$ and $\Ext^1_A(X,X)=0$.
\end{proposition}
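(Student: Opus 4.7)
The plan is to exploit the geometry of the representation variety together with Voigt's lemma. Fix $n \geq 1$ and let $V = k^n$. One considers the affine $k$-scheme
\[
\mathrm{Rep}_A(n) = \Hom_{k\text{-alg}}(A, \End_k(V)),
\]
whose $R$-points (for a commutative $k$-algebra $R$) parametrise $A$-module structures on $V \otimes_k R$. The algebraic group $\mathrm{GL}(V)$ acts by conjugation, and orbits of $\mathrm{GL}(V)(k)$ on $k$-points correspond bijectively to isomorphism classes of $A$-modules of $k$-dimension $n$.

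The crucial input is Voigt's lemma: for each $\rho \in \mathrm{Rep}_A(n)(k)$ corresponding to an $A$-module $X$, there is a canonical isomorphism
\[
T_\rho \mathrm{Rep}_A(n)\, /\, T_\rho(\mathrm{GL}(V) \cdot \rho) \;\cong\; \Ext^1_A(X, X).
\]
I would establish this by the standard deformation-theoretic computation: tangent vectors at $\rho$ correspond to $k[\varepsilon]/(\varepsilon^2)$-algebra lifts $\rho + \varepsilon \eta$, which exist precisely when $\eta$ is a Hochschild $1$-cocycle on $A$ with values in $\End_k(V)$; tangent vectors along the orbit are those $\eta$ arising from conjugation by $1 + \varepsilon g$, i.e.\ the $1$-coboundaries $a \mapsto [g, \rho(a)]$. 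The quotient is therefore $HH^1(A, \End_k(X)) \cong \Ext^1_A(X, X)$.

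With Voigt's lemma in hand, the result follows from scheme-theoretic considerations. If $\Ext^1_A(X, X) = 0$, then the tangent space of the orbit at $\rho$ fills out the ambient tangent space, forcing $\mathrm{GL}(V) \cdot \rho$ to be open in $\mathrm{Rep}_A(n)$. Since any two nonempty open subsets of an irreducible topological space must intersect, the number of pairwise disjoint nonempty open subsets of a Noetherian space is bounded by the number of irreducible components. Thus $\mathrm{Rep}_A(n)$, being Noetherian, contains only finitely many such open orbits. Passing to the algebraic closure $\bar k$ yields finiteness of $\bar k$-isomorphism classes of rigid $n$-dimensional modules, and a brief Galois-descent argument (using that $\Ext^1$ commutes with flat base change, so rigid $k$-modules remain rigid after extending scalars) recovers the statement over $k$.

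The main obstacle is Voigt's lemma itself, which requires a careful bookkeeping of Hochschild cocycles and coboundaries in the deformation-theoretic framework; this is however classical and fully verified in \cite[Corollary 9]{HS}. Once it is in place, the finiteness assertion is an immediate consequence of the Noetherian property of finite-type affine schemes.
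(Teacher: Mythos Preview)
Your proposal is correct and takes essentially the same approach as the paper: apply Voigt's lemma over the algebraic closure (where rigid modules give open orbits, hence are finite in number), then descend to general $k$. The paper makes the descent step explicit via the Noether--Deuring theorem ($X \simeq Y$ over $A$ if and only if $\overline{X} \simeq \overline{Y}$ over $\overline{A}$), which is what your ``Galois descent'' should unpack to---note that $\Ext^1$ compatibility alone only shows that rigidity is preserved under base change, not that the base-change map on isomorphism classes is injective.
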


\begin{proof}
When $k$ is algebraically closed, the assertion follows from Voigt's Lemma \cite{V}. For the general case, let $\overline{k}$ be the algebraic closure of $k$ and $\overline{A}:=\overline{k}\otimes_kA$. The functor $\overline{(-)}:\mod A\to\mod\overline{A}$ satisfies $\dim_{\overline{k}}\overline{X}=\dim_kX$ and $\Ext^1_{\overline{A}}(\overline{X},\overline{X})=\overline{k}\otimes_k\Ext^1_A(X,X)$.
Moreover, $X\simeq Y$ holds as $A$-modules if and only if $\overline{X}\simeq\overline{Y}$ as $\overline{A}$-modules. Thus the assertion follows.
\end{proof}

We also need the following partial answer to the finitistic dimension conjecture.

\begin{proposition}[{\cite[12.63]{JL}\cite[2.2]{MS}}]\label{FDC}
For all $n,n'\ge1$, there exists an integer $m\ge1$ satisfying the following:
if $A$ is a $k$-algebra over a field $k$ with $\dim_kA\le n$, and $X\in\mod A$ with $\dim_kX\le n'$, then the projective dimension $\projdim_AX$ is either $\infty$ or at most $m$.
\end{proposition}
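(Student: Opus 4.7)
My plan is a parameter-space argument: pairs $(A,X)$ of bounded dimension form a Noetherian scheme, and the loci $\{\projdim_A X \le m\}$ are open subschemes forming an ascending chain. Noetherianity then forces this chain to stabilise, which yields the uniform bound claimed.

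First, I would observe that projective dimension is preserved under any field extension $K/k$: since field extensions are flat, there is a natural isomorphism $\Ext_A^i(X,Y)\otimes_k K \simeq \Ext_{A\otimes_k K}^i(X\otimes_k K, Y\otimes_k K)$, whence $\projdim_{A\otimes_k K}(X\otimes_k K) = \projdim_A X$. So a uniform bound for $k$ algebraically closed automatically gives the bound for arbitrary $k$, and we may assume $k$ is algebraically closed.

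Over algebraically closed $k$, pairs $(A,X)$ with $\dim_k A \le n$ and $\dim_k X \le n'$ are parameterised by an affine $k$-scheme $\mathcal{P}_{n,n'}$ of finite type, cut out from the space of structure constants (for $A$) and action matrices (for $X$) by the associativity and module axioms. For each $m\ge 0$, I claim the locus
\[
U_m \;=\; \bigl\{(A,X)\in\mathcal{P}_{n,n'} : \projdim_A X \le m \bigr\}
\]
is open in $\mathcal{P}_{n,n'}$. Granting this, Noetherianity of $\mathcal{P}_{n,n'}$ forces the ascending chain $U_0 \subseteq U_1 \subseteq \cdots$ to stabilise at some index $m$, and the stabilised $U_m = \bigcup_j U_j$ then coincides with the locus of pairs having finite projective dimension. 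Hence every $(A,X) \in \mathcal{P}_{n,n'}$ with $\projdim_A X < \infty$ satisfies $\projdim_A X \le m$, and since $m$ depends only on the scheme $\mathcal{P}_{n,n'}$, hence only on $n$ and $n'$, the desired uniform bound follows.

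The delicate step, and main obstacle, will be justifying openness of $U_m$ in this family setting, where both the algebra $A$ and the module $X$ vary simultaneously. I would handle this by constructing a relative free resolution of the universal module $\mathcal{X}$ over the universal algebra $\mathcal{A}$ on $\mathcal{P}_{n,n'}$, and then expressing $\projdim_A X\le m$ as the splitting-off of the degree-$m$ syzygy, which is a surjectivity condition on a map of coherent sheaves and hence an open condition. An alternative route, avoiding algebraic geometry entirely, would be a model-theoretic compactness argument: prove definability of ``$\projdim \le m$'' by a pp-formula in the language of modules and invoke the compactness theorem of first-order logic to extract the uniform $m$.
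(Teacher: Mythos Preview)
The paper does not give its own proof of this proposition; it simply cites \cite[12.63]{JL} and \cite[2.2]{MS}. Your geometric parameter-space argument is exactly the approach of \cite{MS} (``A geometric approach to the finitistic dimension conjecture''), and the model-theoretic compactness alternative you mention at the end is exactly the approach of \cite{JL}, so you have independently recovered both cited proofs. One small point: to obtain a bound $m$ uniform over \emph{all} fields $k$, rather than a separate $m_k$ for each algebraically closed $k$, you should set up $\mathcal{P}_{n,n'}$ as a finite-type scheme over $\Spec\mathbb{Z}$ (the defining equations have integer coefficients) and run the Noetherian argument there; your reduction step only passes from $k$ to its algebraic closure.
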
 

Now we are ready to prove Proposition~\ref{complexity one}.

\begin{proof}[Proof of Proposition \ref{complexity one}]
Assume that each $T(A)$-module has complexity at most one.

(1) We first show that $A$ has finite global dimension. Suppose, to the contrary, that $\gldim A = \infty$. Then there exists a simple $A$-module $S$ such that $\projdim_AS=\infty$. Take a minimal projective resolution
\[\cdots\to P_2\xrightarrow{f_2} P_1\xrightarrow{f_1} P_0\to S\to0\]
of $S$ in $\mod^{\Z}T(A)$.  By the complexity assumption, there exists some $N>0$ such that $\dim_k\Omega_{T(A)}^i(S)\le N$ for all $i\ge0$.

Since $\Omega_{T(A)}^i(S)_0=\Omega_A^i(S)\neq0$ and $\Omega_{T(A)}^i(S)$ is indecomposable, the inequality $\dim_k\Omega_{T(A)}^i(S)\le N$ implies that $\Omega_{T(A)}^i(S)_N=0$. Let $n\in\Z_{+}$ be  minimal with the property that $\Omega_{T(A)}^i(S)_{n+1}=0$ for all $i\ge0$. Then each $P_i$ is generated by elements of degree at most $n-1$, and the sequence
\begin{equation}\label{degree n part}
\cdots\to (P_2)_n\xrightarrow{(f_2)_n} (P_1)_n\xrightarrow{(f_1)_n} (P_0)_n\to S_n=0
\end{equation}
is exact. In particular, $(P_i)_n\in\inj A$ holds for each $i\ge0$.

We claim that $(f_i)_n$ is a radical map for all $i\ge1$.  Indeed, for each $i$, let $Q_i$ be a maximal direct summand of $P_i$ such that $Q_i$ is generated in degree $n-1$, and let $g_i:Q_i\to Q_{i-1}$ be the composition $Q_i\subset P_i\xrightarrow{f_i}P_{i-1}\to Q_{i-1}$, which is a radical map as $f_i$ by assumption is radical. Then the fact that $P_i$'s being generated by elements of degree at most $n-1$ means that
\[(f_i)_n=(g_i)_n:(P_i)_n=(Q_i)_n\to (P_{i-1})_n=(Q_{i-1})_n\]
holds. Moreover, the morphism $(g_i)_n$ in $\inj A$ is the image of the morphism $(g_i)_{n-1}$ in $\proj A$ under the Nakayama functor. Since $g_i$ is a radical map, so is $(g_i)_{n-1}$, and thus $(f_i)_n$, as desired.

Let $m$ be the minimal number such that $\Omega_{T(A)}^m(S)_n\neq0$. Then the exact sequence \eqref{degree n part} shows that $\injdim_A\Omega_{T(A)}^{m+i}(S)_n=i$ for each $i\ge0$. Since $\dim_k\Omega_{T(A)}^{m+i}(S)_n\le N$, this contradicts to Proposition \ref{FDC}.

(2) Now we prove that $A$ is twisted fractionally Calabi--Yau.
Since the kernel of the canonical projection $T(A)\to A,\:(a,f)\mapsto a$ is concentrated in degree $1$, we obtain $\Ext^1_{\mod^{\Z/2\Z}T(A)}(A,A)=0$. Setting $C=T_2(A)$, and identifying $\mod^{\Z/2\Z}T(A)$ with $\mod C$ by \eqref{mod Z/nZ B}, we obtain $\Ext^1_{C}(\Omega_{C}^i(A),\Omega^i_{C}(A))=\Ext^1_{C}(A,A)=0$ for each $i\ge0$.
Since the complexity assumption says that $\dim_k\Omega_{C}^i(A)=\dim_k\Omega_{T(A)}^i(A)$ is bounded, Proposition \ref{voigt} shows that there exists $n\ge1$ such that $\Omega_{C}^n(A)\simeq A$ in $\mod C$, or equivalently, $\Omega_{T(A)}^n(A)\simeq A$ in $\mod^{\Z/2\Z}T(A)$.
Hence, there exists some multiple $s$ of $n$ such that $\Omega_{T(A)}^{s}(P)\simeq P$ holds in $\mod^{\Z/2\Z}T(A)$ for each indecomposable projective $A$-module $P$.
Consequently, there exists $\ell_P\in 2\Z_{>0}$ such that $\Omega_{T(A)}^s(P)\simeq P(-\ell_P)$ in $\mod^{\Z}T(A)$.
Since $A$ is ring-indecomposable, $\ell=\ell_P$ is independent of $P$. Thus $\Omega_{T(A)}^s(A)\simeq A(-\ell)$ holds in $\mod^{\Z}T(A)$. Using the commutative diagram \eqref{serre}, we have $A[-s]\simeq\nu^{-\ell}(A)[-\ell]$ in
$\Db(\mod A)$. Thus $A$ is twisted fractionally Calabi--Yau.
\end{proof}

The following result from \cite{DI} gives the implication (v)$\Rightarrow$(iv) in Theorem~\ref{main theorem 2 again}. Note that while in \cite[Cor~2.9]{DI}, the algebra is assumed to be basic, this is necessary only to ensure that the given $d$-cluster-tilting module is basic (which is inconsequential for our purposes).

\begin{proposition}\cite[Cor~2.9]{DI} \label{DI2.9}
Assume that $A$ is twisted $\frac{m}{\ell}$-Calabi--Yau, with $\gldim A\le d$ for some positive integer $d$.
Set $g={\rm gcd}(\ell+m,d+1)$ and $r=\frac{(d+1)\ell-(\ell+m)}{g}=\frac{d\ell-m}{g}$. 
Then
\[ T_r(A)\oplus \bigoplus_{i=1}^r \Omega_{T_r(A)}^{(d+1)i}(A(i)) \in \mod T_r(A)\]
is a $d$-cluster-tilting $T_r(A)$-module.
\end{proposition}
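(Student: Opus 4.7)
The plan is to realise $\stmod T_r(A)$ as an orbit category of $\Db(\mod A)$ via Happel's theorem and then reduce the $d$-cluster-tilting condition to a Hom/Ext computation there. By the equivalence \eqref{mod Z/nZ B}, $\stmod T_r(A)\simeq\stmod^{\Z/r\Z}T(A)$, which is the $\Z/r\Z$-orbit category of $\stmod^{\Z}T(A)$ under the grading shift $(r)$. Combining Happel's equivalence \eqref{happel} with the commutative diagram \eqref{serre}, this identifies $\stmod T_r(A)$ with the orbit category $\Db(\mod A)/F$ where $F=[r]\circ\nu^{r}$, and identifies the non-projective summand $\Omega_{T_r(A)}^{(d+1)i}(A(i))$ with $\nu^{i}(A)[-di]\in \Db(\mod A)/F$ (because $(1)$ on $\stmod^{\Z}T(A)$ corresponds to $[1]\circ\nu$ on $\Db(\mod A)$, and $\Omega$ corresponds to $[-1]$).

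Since $T_r(A)$ is self-injective and $T_r(A)\in \add M$, it suffices to check that $\bigoplus_{i=1}^{r}\nu^{i}(A)[-di]$ is $d$-cluster-tilting in the orbit category. Using the Hom formula $\Hom_{\Db/F}(X,Y)=\bigoplus_{n\in\Z}\Hom_{\Db}(X,F^nY)$, the required Ext vanishing becomes
\[
\bigoplus_{n\in\Z}\Hom_{\Db(\mod A)}\bigl(A,\nu^{(j-i)+rn}(A)[k-d(j-i)+rn]\bigr)=0
\]
for all $0<k<d$ and $1\le i,j\le r$. I would then apply the twisted Calabi--Yau relation $\nu^{\ell}\simeq[m]\circ\phi^{*}$ to each term, writing $(j-i)+rn=q\ell+t$ with $0\le t<\ell$; this reduces the above to ordinary $\Ext^{*}_A(A,\nu^{t}(A))$ with shifts $k-d(j-i)+rn+qm$. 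By $\gldim A\le d$, such Ext groups vanish unless the shift lies in $[0,d]$, and the arithmetic role of the choice $r=(d\ell-m)/g$ is precisely to rule this out except in the trivial case $n=q=0$, $i=j$, $k=0$.

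The main obstacle is this arithmetic case analysis: one must verify that the combined congruences modulo $\ell+m$ and $d+1$ imposed by $r$ prevent any non-trivial triple $(n,q,k)$ from realising a shift in $[0,d]$ while also forcing $t\in[0,\ell)$. Once the Ext vanishing is established, the second part of the cluster-tilting condition (that this vanishing characterises $\add M$) follows by induction on the projective dimension in $\Db(\mod A)$: an object with no positive $\Ext^{<d}$ into the summands can be approximated by triangles built from them, and the bound $\gldim A\le d$ terminates the recursion. Finally, to transfer the result from $\stmod T_r(A)$ back to $\mod T_r(A)$ one applies the standard principle that, for a self-injective algebra, $d$-cluster-tilting in $\mod$ is equivalent to $d$-cluster-tilting in $\stmod$ after adding all indecomposable projectives as summands, which is exactly the role played by the summand $T_r(A)$ of $M$.
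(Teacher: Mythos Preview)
The paper does not prove this proposition; it is quoted verbatim from \cite[Cor.~2.9]{DI}, so there is no ``paper's own proof'' to compare against beyond the reference. Your outline does follow the strategy of \cite{DI}: pass through Happel's equivalence, identify $\stmod T_r(A)$ with the orbit category $\Db(\mod A)/(\nu^r\circ[r])$, and recognise the proposed summands as the orbit of $\{\nu_d^i(A)\mid i\in\Z\}$ where $\nu_d=\nu\circ[-d]$.

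There is, however, a genuine gap in your treatment of the second half of the cluster-tilting condition. You propose to show that any object $X$ with $\Ext^{1,\dots,d-1}(M,X)=0$ lies in $\add M$ by ``induction on the projective dimension in $\Db(\mod A)$'' and approximation by triangles. This is not how the argument works, and I do not see how to make it work directly: an arbitrary object of the orbit category lifts to an object of $\Db(\mod A)$, but the vanishing hypothesis in the orbit category does not give you control over a single lift. The actual input in \cite{DI} is the theorem (from \cite{IO}, building on \cite{HI}) that for $A$ twisted fractionally Calabi--Yau with $\gldim A\le d$, the subcategory $\mathcal{U}=\add\{\nu_d^i(A)\mid i\in\Z\}$ is already $d$-cluster-tilting in $\Db(\mod A)$; the generation statement is inherited from there, not proved ab initio in the orbit category. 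Your sketch also leaves the arithmetic verification entirely open, which you acknowledge; in \cite{DI} this is handled by checking that $F=\nu^r[r]$ preserves $\mathcal{U}$ and that the orbit of $A$ under $\nu_d$ modulo $F$ has exactly $r$ elements, which is precisely what the formula $r=(d\ell-m)/g$ encodes.
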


We now have all the pieces needed to put together the proof of Theorem~\ref{main theorem 2 again}.

\begin{proof}[Proof of Theorem \ref{main theorem 2 again}]
(i)$\Leftrightarrow$(ii) was shown in Proposition \ref{thm:hani-gr}, (i)$\Rightarrow$(iii) is immediate, (iii)$\Rightarrow$(v) was shown in Proposition \ref{complexity one}, and (v)$\Rightarrow$(ii) was shown in Proposition \ref{tw-fCY to tw-pe}. 

On the other hand, (v)$\Rightarrow$(iv) follows from Proposition \ref{DI2.9}, and (iv)$\Rightarrow$(iii) was shown in \cite[Theorem 1.1]{EH} since the forgetful functor $\mod T_r
(A)\to\mod T(A)$ preserves the complexities of modules. 
Thus all the conditions (i)--(v) are equivalent.
\end{proof}


\section{On $\ell$-th roots of the $m$-th suspension functor $[m]$}\label{section 4}

In this section, we prepare Theorem \ref{trivial extension is periodic: second part} about a complex $P$ of $A^{\mathrm{e}}$-modules whose $\ell$-th tensor power is isomorphic to $A[m]$. This is necessary to prove Theorem \ref{main theorem 2} in the next section.

\subsection{Reminder on chain complexes}
Since the result in the next subsection requires delicate calculations of complexes especially on signs, we recall here some details and conventions.

For a differential graded algebra $B$, $\CCC(B)$ denotes the category of dg $B$-modules, and $\DDD(B)$ its derived category. In particular, if $B$ is a $k$-algebra concentrated in degree zero, then $\CCC(B)$ coincides with the category of chain complexes in, and $\DDD(B)$ with the derived category of, the abelian category $\Mod B$.

Let $X,Y,Z$ etc{.} be objects in $\CCC(k)$.
The degree of a homogeneous element $x\in X$ is denoted by $|x|=|x|_X$. The tensor product $X\otimes_kY\in\CCC(k)$ is given by
\[(X\otimes_kY)^i=\bigoplus_{j\in\Z}(X^j\otimes_kY^{i-j})\ \mbox{ and }\ d_{X\otimes_kY}(x\otimes y)=d_X(x)\otimes y+(-1)^{|x|}x\otimes d_Y(y)\]
for each homogeneous elements $x\in X$ and $y\in Y$. For morphisms $f\in\Hom_{\CCC(k)}(X,X')$ and $g\in\Hom_{\CCC(k)}(Y,Y')$, a morphism $f\otimes g\in\Hom_{\CCC(k)}(X\otimes_kY,X'\otimes_kY')$ is given by
\[(f\otimes g)(x\otimes y)=f(x)\otimes g(y).\]
For morphisms $f'\in\Hom_{\CCC(k)}(X',X'')$ and $g'\in\Hom_{\CCC(k)}(Y',Y'')$, we clearly have
\begin{equation}\label{f'g'fg}
(f'\otimes g')\circ(f\otimes g)=(f'f)\otimes (g'g).
\end{equation}
We view the canonical isomorphism
\[(X\otimes _kY)\otimes_kZ\simeq X\otimes_k(Y\otimes_kZ)\ \mbox{ in $\CCC(k)$ given by }\ (x\otimes y)\otimes z\mapsto x\otimes(y\otimes z),\]
as an identification, and simply write $X\otimes_kY\otimes_kZ$ for this object.  Now let $n,m\in\Z$. The object $X[n]\in\CCC(k)$ is given by
\[(X[n])^i=X^{i+n}\ \mbox{ and }\ d_{X[n]}(x)=(-1)^nd_X(x).\]
There is a canonical isomorphism
\begin{equation}\label{[m] and [n]}
(X[m])\otimes_k(Y[n])\simeq (X\otimes_kY)[m+n]\ \mbox{ in $\CCC(k)$ given by }\ x\otimes y\mapsto (-1)^{|x|n}x\otimes y
\end{equation}
where $|x|=|x|_{X[m]}$, see \cite[4.1.14]{Ya}. On the other hand, we have a canonical isomorphism
\begin{equation}\label{koszul sign}
X\otimes_kY\simeq Y\otimes_kX\ \mbox{ in $\CCC(k)$ given by }\ x\otimes y\mapsto (-1)^{|x||y|}y\otimes x
\end{equation}
for each homogeneous elements $x\in X$ and $y\in Y$.

Recall that $A$ denotes a finite-dimensional $k$-algebra.
Let $X,Y,Z$ etc{.} be objects in $\CCC(A^{\mathrm{e}})$. Then $X\otimes_AY\in\CCC(A^{\mathrm{e}})$ is the quotient of $X\otimes_kY$ by the subcomplex generated by elements $(x(a\otimes 1))\otimes y-x\otimes(y(1\otimes a))$ for $x\in X$, $y\in Y$ and $a\in A$, with differential induced by $d_{X\otimes_kY}$. 
In particular, with the exception of \eqref{koszul sign}, the sign rules listed above still hold if we replace $\otimes_k$ and $\CCC(k)$ by $\otimes_A$ and $\CCC(A^{\mathrm{e}})$ respectively.  In what follows, we shall frequently use the following canonical isomorphisms, which are special cases of \eqref{[m] and [n]}:
\begin{align*}
l_{X,n}:(A[n])\otimes_AX\simeq X[n]&\ \mbox{ in }\ \CCC(A^{\mathrm{e}})\ \mbox{ given by }1\otimes x\mapsto x,\\
r_{X,n}:X\otimes_A(A[n])\simeq X[n]&\ \mbox{ in }\ \CCC(A^{\mathrm{e}})\ \mbox{ given by }\ x\otimes1\mapsto(-1)^{|x|n}x.
\end{align*}
For the case $n=0$, no signs appear in the isomorphisms $A\otimes_AX\simeq X\simeq X\otimes_AA$. Thus we can safely identify  $A\otimes_AX$ and $X\otimes_AA$ with $X$.

For each $a\in Z(A)$, there are
\begin{equation}\label{left/right multiplication}
\lambda_{X,a}:X\to X\ \mbox{ and }\ \rho_{X,a}:X\to X\ \mbox{ in }\ \CCC(A^{\mathrm{e}})\ \mbox{ given by }\ x\mapsto ax\ \mbox{ and }\ x\mapsto xa,
\end{equation}
respectively. We get
\begin{align}\notag
&\lambda_{X,a}\otimes 1_Y=\lambda_{X\otimes_AY,a},\ 1_X\otimes\rho_{Y,a}=\rho_{X\otimes_AY,a}\ \mbox{ and }\\ \label{1a=a1}
&1_X\otimes \lambda_{Y,a}=\rho_{X,a}\otimes 1_Y:X\otimes_AY\to X\otimes_AY\ \mbox{ in }\ \CCC(A^{\mathrm{e}}).
\end{align}
Moreover, each morphism $f\in\Hom_{\CCC(A^{\mathrm{e}})}(X,Y)$ satisfies
\begin{equation}\label{fa=af}
f\circ \lambda_{X,a}=\lambda_{Y,a}\circ f\ \mbox{ and }\ f\circ \rho_{X,a}=\rho_{Y,a}\circ f.
\end{equation}
If $X\in\CCC(A^{\mathrm{e}})$ and $a\in Z(A)$ satisfy $\lambda_{X,a}=\rho_{X,a}$ as morphisms in $\DDD(A^{\mathrm{e}})$, then we write
\begin{align}\label{a1}
a1_X:=\lambda_{X,a}=\rho_{X,a}\in\End_{\DDD(A^{\mathrm{e}})}(X).
\end{align}

\subsection{Cofibrant root of $A[m]$ and a certain central element}\label{section: z}

In this section, we fix $P\in\CCC(A^{\mathrm{e}})$. For $i\ge0$, we write
\[P^{\otimes i}:=\overbrace{P\otimes_A\cdots\otimes_AP}^i,\]
or simply $P^i$ if there is no danger of confusion. We assume the following three conditions.
\begin{enumerate}
\item[{\rm (R1)}] $P$ is cofibrant (i.e.\ each morphism $f:P\to X$ in $\CCC(A^{\mathrm{e}})$ is null-homotopic if $X$ is acyclic).
\item[{\rm (R2)}] There exist integers $\ell\ge1$ and $m$ and a quasi-isomorphism
\begin{equation}\label{define s}
s:P^{\otimes\ell}\to A[m]\ \mbox{ in }\ \CCC(A^{\mathrm{e}}).
\end{equation}
\item[{\rm (R3)}] For each $a\in Z(A)$, $\lambda_{P,a}=\rho_{P,a}$ holds in $\DDD(A^{\mathrm{e}})$\footnote{$\lambda_{P,a}=\rho_{P,a}$ does not necessarily hold in $\CCC(A^{\mathrm{e}})$.}, see \eqref{left/right multiplication}. Moreover there is an isomorphism
\begin{align*}
Z(A) \simeq \End_{\DDD(A^{\mathrm{e}})}(P)\ \mbox{ given by }\ a\mapsto  a1_P:= \lambda_{P,a}=\rho_{P,a}.
\end{align*}
\end{enumerate}
By (R1), we have functors $P\otimes_A-$ and $-\otimes_AP:\DDD(A^{\mathrm{e}})\to\DDD(A^{\mathrm{e}})$. By (R2), we have isomorphisms
\begin{eqnarray*}\label{define t}
t:=\Big(P\otimes_A(A[m])\xrightarrow{(1_{P}\otimes s)^{-1}}P^{\otimes\ell+1}\xrightarrow{s\otimes 1_{P}}(A[m])\otimes_AP\Big)\ \mbox{ in }\ \DDD(A^{\mathrm{e}}),\\ \label{define u}
t':=\Big(P[m]\xrightarrow{r_{P,m}^{-1}}P\otimes_A(A[m])\xrightarrow{t}(A[m])\otimes_AP\xrightarrow{l_{P,m}}P[m]\Big)\ \mbox{ in }\ \DDD(A^{\mathrm{e}}).
\end{eqnarray*}
By (R3), there is $z\in Z(A)^\times$ such that $t'=z1_{P[m]}$, see \eqref{a1}. These definitions can be summarized by the following commutative diagram in $\DDD(A^{\mathrm{e}})$.
\begin{equation}\label{define tuz}
\xymatrix@R0.5em@C4em{
&P\otimes_A(A[m])\ar[r]^(.6){r_{P,m}}\ar[dd]^t&P[m]\ar[dd]^{t'=z1_{P[m]}}\\
P^{\otimes\ell+1}\ar[dr]_{s\otimes 1_P}\ar[ur]^{1_P\otimes s}\\
&(A[m])\otimes_AP\ar[r]^(.6){l_{P,m}}&P[m]}
\end{equation}
We also consider the following condition.
\begin{enumerate}[\rm(R4)]
\item The map $Z(A)\to\End_{\DDD(k)}(P\otimes_{A^{\mathrm{e}}}A)$ given by $a\mapsto1_P\otimes(a1_A )$ is injective.
\end{enumerate}
The aim of this section is to prove the following result.

\begin{theorem}\label{trivial extension is periodic: second part}
Let $A$ be a finite-dimensional algebra over a field $k$. Assume that $P\in\CCC(A^{\mathrm{e}})$ satisfies the three conditions {\rm(R1)-(R3)} above. Then the element $z\in Z(A)^\times$ given in \eqref{define tuz} satisfies
\[z^\ell=(-1)^m.\]
Moreover, if the condition {\rm(R4)} above is also satisfied, then $z^{\ell+1}=1$ holds. Thus, we have
\[z=(-1)^m\ \mbox{ and }\ (-1)^{(\ell+1)m}=1\mbox{ in }k,\]
that is, at least one of $\ell+1$ and $m$ is even, or ${\rm char} k=2$.
\end{theorem}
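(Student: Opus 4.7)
My strategy is to reduce both parts of the theorem to iterated applications of the basic braiding relation $l_{P,m}\circ t\circ r_{P,m}^{-1}=z\cdot 1_{P[m]}$, followed by an appeal to (R3) or (R4) to extract a scalar equation in $Z(A)$.

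\textbf{First part.} I plan to compare two morphisms $\alpha_\ell,\beta_\ell\colon P^{\otimes 2\ell}\to P^{\otimes\ell}[m]$ in $\DDD(A^e)$, defined by applying $s$ to the last, respectively first, $\ell$ factors of $P^{\otimes 2\ell}$ and then identifying with $P^{\otimes\ell}[m]$:
\[\alpha_\ell:=r_{P^{\otimes\ell},m}\circ(1_{P^{\otimes\ell}}\otimes s),\qquad \beta_\ell:=l_{P^{\otimes\ell},m}\circ(s\otimes 1_{P^{\otimes\ell}}).\]
The first sub-claim is $\beta_\ell\circ\alpha_\ell^{-1}=z^\ell\cdot 1_{P^{\otimes\ell}[m]}$. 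I would prove this by noting that $\beta_\ell\circ\alpha_\ell^{-1}=l_{P^{\otimes\ell},m}\circ T\circ r_{P^{\otimes\ell},m}^{-1}$ with $T:=(s\otimes 1_{P^{\otimes\ell}})\circ(1_{P^{\otimes\ell}}\otimes s)^{-1}$, observing that the intermediate $s$'s and $s^{-1}$'s telescope when $T$ is expanded as the iterated braiding $(t\otimes 1_{P^{\otimes(\ell-1)}})\circ(1_P\otimes t\otimes 1_{P^{\otimes(\ell-2)}})\circ\cdots\circ(1_{P^{\otimes(\ell-1)}}\otimes t)$, and then iterating the defining relation of $z$ using the naturality of $l$ and $r$ against the tensor decompositions of $P^{\otimes\ell}$ dictated by \eqref{[m] and [n]}.

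The second sub-claim compares $s[m]\circ\alpha_\ell$ and $s[m]\circ\beta_\ell$. Using naturality of $r$ and $l$ together with the direct calculation that the canonical isomorphism $\iota\colon(A[m])\otimes_A(A[m])\to A[2m]$ from \eqref{[m] and [n]} satisfies $\iota(1\otimes 1)=(-1)^m$ while $l_{A[m],m}(1\otimes 1)=1$, I would deduce $s[m]\circ\alpha_\ell=\iota\circ(s\otimes s)$ and $s[m]\circ\beta_\ell=l_{A[m],m}\circ(s\otimes s)=(-1)^m\iota\circ(s\otimes s)$. Combined with the first sub-claim this yields $(z^\ell-(-1)^m)\cdot(s[m]\circ\alpha_\ell)=0$ in $\DDD(A^e)$; since $s[m]\circ\alpha_\ell$ is a quasi-isomorphism and $z^\ell-(-1)^m$ is a central scalar by (R3), this forces $z^\ell=(-1)^m$ in $Z(A)$.

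\textbf{Second part.} Assuming also (R4), I would apply $-\otimes_{A^e}A$ to the defining identity $\beta=z\alpha$ of $z$ to obtain $\bar\beta=z\bar\alpha$, where $\bar\alpha,\bar\beta\colon P^{\otimes(\ell+1)}\otimes_{A^e}A\to(P\otimes_{A^e}A)[m]$ are the induced morphisms. The cyclic tensor product carries a Koszul-signed cyclic rotation $\bar\tau$ satisfying $\bar\tau^{\ell+1}=1$, since the accumulated Koszul sign of a full rotation evaluates to $(-1)^{2\sum_{i<j}|x_i||x_j|}=1$. The crucial identity is $\bar\alpha\circ\bar\tau=\bar\beta$, which I would verify on homogeneous inputs: the Koszul sign $(-1)^{|x_\ell|(|x_0|+\cdots+|x_{\ell-1}|)}$ produced by $\bar\tau$ combines with the sign $(-1)^{|x_\ell|m}$ from $r_{P,m}$ to give $1$, because $s(x_0,\ldots,x_{\ell-1})$ is non-zero only on the degree slice $|x_0|+\cdots+|x_{\ell-1}|=-m$, and the cyclic bimodule relation then identifies $s(x_0,\ldots,x_{\ell-1})\cdot x_\ell$ with $x_\ell\cdot s(x_0,\ldots,x_{\ell-1})$ in $P\otimes_{A^e}A$. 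Iterating $\bar\alpha\circ\bar\tau=z\bar\alpha$ gives $z^{\ell+1}\bar\alpha=\bar\alpha\circ\bar\tau^{\ell+1}=\bar\alpha$; cofibrancy of $P$ from (R1) guarantees that $\bar\alpha$ is a quasi-isomorphism, so $z^{\ell+1}-1$ vanishes as an endomorphism of $P\otimes_{A^e}A$ in $\DDD(Z(A))$, and (R4) then yields $z^{\ell+1}=1$ in $Z(A)$. The combined conclusions $z^\ell=(-1)^m$ and $z^{\ell+1}=1$ give $z=(-1)^m$ and thus $(-1)^{(\ell+1)m}=1$.

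The main obstacle will be the sign bookkeeping. For the first part, verifying $\beta_\ell\circ\alpha_\ell^{-1}=z^\ell\cdot 1$ requires careful induction using naturality of $l$ and $r$ against iterated tensor products, where \eqref{[m] and [n]} introduces non-trivial signs that must cancel at each step; for the second part, the cancellation between the Koszul sign of rotation and the sign in $r_{P,m}$ depends in an essential way on the degree restriction imposed by the non-vanishing of $s$.
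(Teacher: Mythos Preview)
Your proposal is correct and follows essentially the same approach as the paper. Your maps $\alpha_\ell,\beta_\ell$ and the iterated braiding $T$ are exactly the paper's $r_{P^\ell,m}\circ(1\otimes s)$, $l_{P^\ell,m}\circ(s\otimes 1)$ and $t^{(\ell)}$ from Lemmas~\ref{z^ell} and~\ref{(-1)^m}; your second part, with the cyclic rotation $\bar\tau$ on $P^{\otimes(\ell+1)}\otimes_{A^e}A$ and the identity $\bar\alpha\circ\bar\tau=\bar\beta=z\bar\alpha$, is precisely the content of Lemma~\ref{rho=z} (where $\bar\tau$ is called $\rho$), together with \eqref{rotate} and the final appeal to (R4) via $c(P^{\ell+1})\simeq c(P)[m]$.
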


\begin{remark}
We note that the condition (R4) can be weakened as follows.
\begin{enumerate}[\rm(R4$'$)]
\item The map $Z(A)\to\End_{\DDD(Z(A))}(P\otimes_{A^{\mathrm{e}}}A)$ given by $a\mapsto1_P\otimes(a1_A )$ is injective.
\end{enumerate}
We prove Theorem \ref{trivial extension is periodic: second part} using (R4) since the proof using (R4$'$) becomes more technical and we only need Theorem \ref{trivial extension is periodic: second part} using (R4)  in later sections.
\end{remark}

The proof of the first equality $z^\ell=(-1)^m$ is divided into two lemmas.

\begin{lemma}\label{z^ell}
The following diagram commutes in $\CCC(A^{\mathrm{e}})$. 
\[\xymatrix@R0.5em@C4em{
P^{\otimes\ell}[m]\ar[dd]_{(-1)^m1_{P^\ell[m]}}&P^{\otimes\ell}\otimes_A(A[m])\ar[l]_{r_{P^\ell,m}}\ar[dr]^{s\otimes 1_{A[m]}}\\
&&(A[m])\otimes_A(A[m])\\
P^{\otimes\ell}[m]\ar[r]^{l_{P^\ell,m}^{-1}}&(A[m])\otimes_AP^{\otimes\ell}\ar[ru]_{1_{A[m]}\otimes s}}\]
\end{lemma}

\begin{proof}
Let $x$ be a homogeneous element in $P^{\otimes\ell}$. The element $x\otimes 1\in P^{\otimes\ell}\otimes_A(A[m])$ is sent to $s(x)\otimes1$ by $s\otimes 1_{A[m]}$, and sent to $(-1)^{m+|x|m}1\otimes s(x)$ by the composition of the other four maps. We claim that the equality
\[s(x)\otimes1=(-1)^{m+|x|m}1\otimes s(x)\ \mbox{ holds in }\ (A[m])\otimes_A(A[m]).\]
In fact, if $|x|=-m$, then $(-1)^m = (-1)^{|x|m}$ holds and the equality follows from the defining property of tensoring over $A$. Otherwise, $s(x)=0$ holds as $A[m]$ is concentrated in degree $-m$. This completes the proof.
\end{proof}

For each $\ell\ge0$ and $a\in\Z(A)$, the equality $\lambda_{P^{\ell},a}=\rho_{P^{\ell},a}$ in $\DDD(A^{\mathrm{e}})$ can be shown inductively:
\[
\lambda_{P^{\ell+1},a}\stackrel{\eqref{1a=a1}}{=}\lambda_{P^{\ell},a}\otimes_A1_P=\rho_{P^{\ell},a}\otimes_A 1_P\stackrel{\eqref{1a=a1}}{=}1_{P^{\ell}}\otimes_A \lambda_{P,a}\stackrel{{\rm(R3)}}{=}1_{P^{\ell}}\otimes_A \rho_{P,a}\stackrel{\eqref{1a=a1}}{=}\rho_{P^{\ell+1},a}.
\]
Thus, for each $i\in\Z$, we have $a1_{P^\ell[i],a}:=\lambda_{P^\ell[i],a}=\rho_{P^\ell[i],a}\in\End_{\DDD(A^{\mathrm{e}})}(P^\ell[i])$, see \eqref{a1}.

\begin{lemma}\label{(-1)^m}
The following diagram commutes in $\DDD(A^{\mathrm{e}})$.
\[\xymatrix@R0.5em@C4em{
P^{\otimes\ell}[m]\ar[dd]_{z^\ell 1_{P^\ell[m]}}&P^{\otimes\ell}\otimes_A(A[m])\ar[l]_{r_{P^\ell,m}}\ar[dr]^{s\otimes 1_{A[m]}}\\
&&(A[m])\otimes_A(A[m])\\
P^{\otimes\ell}[m]\ar[r]^{l_{P^\ell,m}^{-1}}&(A[m])\otimes_AP^{\otimes\ell}\ar[ur]_{1_{A[m]}\otimes s}
}\]
\end{lemma}

\begin{proof}
For simplicity, we omit the subscript of the identity map $1_X$, and write $1^i:=1\otimes\cdots\otimes 1$ the $i$-fold tensor map.  By \eqref{f'g'fg}, we have a commutative diagram in $\CCC(A^{\mathrm{e}})$:
\[\xymatrix@R1em{
&P^{\otimes\ell}\otimes_A(A[m])\ar[dr]^{s\otimes 1}\\
P^{\otimes2\ell}\ar[ur]^{1^\ell\otimes s}\ar[dr]_{s\otimes 1^\ell}\ar[rr]^{s\otimes s}&&(A[m])\otimes_A(A[m])\\
&(A[m])\otimes_AP^{\otimes\ell}\ar[ur]_{1\otimes s}.
}\]
Applying $P\otimes_A-$ and $-\otimes_AP$ repeatedly to the left part of \eqref{define tuz}, we obtain the following commutative diagram in $\DDD(A^{\mathrm{e}})$.
\[\xymatrix@R2em@C8em{
&P^{\otimes\ell}\otimes_A(A[m])\ar[d]^{1^{\ell-1}\otimes t}\\
&P^{\otimes\ell-1}\otimes_A(A[m])\otimes_AP\ar[d]^{1^{\ell-2}\otimes t\otimes 1}\\
P^{\otimes2\ell}\ar@/^/[uur]^{1^\ell\otimes s}\ar[ur]^{1^{\ell-1}\otimes s\otimes 1}\ar[dr]^{1\otimes s\otimes 1^{\ell-1}}\ar@/_/[ddr]^{s\otimes 1^\ell}&\cdots\ar[d]^{1\otimes t\otimes 1^{\ell-2}}\\
&P\otimes_A(A[m])\otimes_AP^{\otimes\ell-1}\ar[d]^{t\otimes 1^{\ell-1}}\\
&(A[m])\otimes_AP^{\otimes\ell}
}\]
We denote by $t^{(\ell)}:P^{\otimes\ell}\otimes_A(A[m])\to (A[m])\otimes_AP^{\otimes\ell}$ the composition of the vertical morphisms. Then the commutativity of the two diagrams above implies that the diagram
\begin{equation}\label{t^ell}
\vcenter{\xymatrix@R.5em{
P^{\otimes\ell}\otimes_A(A[m])\ar[dr]^{s\otimes 1}\ar[dd]^{t^{(\ell)}}\\
&(A[m])\otimes_A(A[m])\\
(A[m])\otimes_AP^{\otimes\ell}\ar[ur]_{1\otimes s}
}}\end{equation}
also commutes in $\DDD(A^{\mathrm{e}})$.

On the other hand, for $X,Y,Z\in\DDD(A^{\mathrm{e}})$ and $n\in\Z$, using \eqref{[m] and [n]} twice, we obtain a canonical isomorphism in $\CCC(A^{\mathrm{e}})$:
\begin{align*}
b_{XYZ,m}:X\otimes_A(Y[n])\otimes_AZ &\to (X\otimes_AY\otimes_AZ)[n]\\
x\otimes y\otimes z & \mapsto (-1)^{|x|n}x\otimes y\otimes z.
\end{align*}
Now we have the following commutative diagram  in $\DDD(A^{\mathrm{e}})$ for each $1\le i\le \ell$, where the left square commutes by the right part of \eqref{define tuz}, the right square commutes by \eqref{1a=a1} and \eqref{fa=af}, and the top and bottom triangles commute by direct calculations:
\[\xymatrix@C3em{
P^{\otimes i}\otimes_A(A[m])\otimes_AP^{\otimes\ell-i}\ar[d]^{1^{i-1}\otimes t\otimes 1^{\ell-i}}\ar[rr]^{1^{i-1}\otimes r_{P,m}\otimes 1^{\ell-i}}\ar@(u,u)[rrrr]^{b_{P^{i}AP^{\ell-i},m}}&\ar@{}[d]^(.4){\eqref{define tuz}}&
P^{\otimes i-1}\otimes_A(P[m])\otimes_AP^{\otimes \ell-i}\ar[d]^{1^{i-1}\otimes z1\otimes 1^{\ell-i}}\ar[rr]^(.65){b_{P^{i-1}PP^{\ell-i},m}}&\ar@{}[d]|(.4){\eqref{1a=a1}\eqref{fa=af}}&P^{\otimes\ell}[m]\ar[d]^{z1}\\
P^{\otimes i-1}\otimes_A(A[m])\otimes_AP^{\otimes\ell-i+1}\ar[rr]^{1^{i-1}\otimes l_{P,m}\otimes 1^{\ell-i}}
\ar@(d,d)[rrrr]^{b_{P^{i-1}AP^{\ell-i+1},m}}&&
P^{\otimes i-1}\otimes_A(P[m])\otimes_AP^{\otimes\ell-i}\ar[rr]^(.65){b_{P^{i-1}PP^{\ell-i},m}}&&P^{\otimes\ell}[m].
}\]
Combining these $\ell$ diagrams, we have a commutative diagram in $\DDD(A^{\mathrm{e}})$:
\[\xymatrix@C3em{
P^{\otimes\ell}\otimes_A(A[m])\ar[d]^{t^{(\ell)}}\ar[rr]^(.6){b_{P^\ell AA,m}=r_{P^\ell,m}}&&P^{\otimes\ell}[m]\ar[d]^{z^\ell1}\\
(A[m])\otimes_AP^{\otimes\ell}\ar[rr]^(.6){b_{AAP^\ell,m}=l_{P^\ell,m}}&&P^{\otimes\ell}[m].
}\]
This together with \eqref{t^ell} shows the assertion.
\end{proof}

We are ready to prove the first equality in Theorem \ref{trivial extension is periodic: second part}.

\begin{proof}[Proof of $z^\ell=(-1)^m$ in Theorem \ref{trivial extension is periodic: second part}]
By comparing the commutative diagrams in Lemmas \ref{z^ell} and \ref{(-1)^m} in $\DDD(A^{\mathrm{e}})$, we obtain the equality $z^\ell 1_{P^\ell[m]}=(-1)^m1_{P^\ell[m]}$ in $\End_{\DDD(A^{\mathrm{e}})}(P^{\otimes\ell}[m])$. Since $\End_{\DDD(A^{\mathrm{e}})}(P^{\otimes\ell}[m])\simeq\End_{\DDD(A^{\mathrm{e}})}(A[2m])\simeq Z(A)$, it follows from condition (R3) that we have $z^\ell=(-1)^m$ as desired.
\end{proof}

We now turn to the proof of 
the second equality $z^{\ell+1}=1$ in Theorem~\ref{trivial extension is periodic: second part}.
Using \eqref{koszul sign} repeatedly, we obtain an automorphism $\widetilde{\rho}\in\End_{\CCC(k)}(P\otimes_k\cdots\otimes_kP)$, where $P$ is tensored $\ell+1$ times, given by
\[\widetilde{\rho}(x_0\otimes x_1\otimes\cdots\otimes x_{\ell}):=(-1)^{(|x_0|+\cdots+|x_{\ell-1}|)|x_{\ell}|}x_{\ell}\otimes x_0\otimes\cdots\otimes x_{\ell-1}.\]
Note that $P^{\ell+1}\otimes_{A^{\mathrm{e}}}A$ is a quotient of $P\otimes_k\cdots\otimes_kP$ (tensoring $\ell+1$ times) by the subcomplex generated by
\begin{align*}
&(x_1\otimes\cdots\otimes x_ia\otimes x_{i+1}\otimes\cdots x_n)-(x_1\cdots\otimes x_i\otimes ax_{i+1}\otimes\cdots\otimes x_n)\text{ for all }1\le i\le n-1,\\
\text{and } &(x_1\otimes\cdots\otimes x_{n-1}\otimes x_na)-(ax_1\otimes x_2\otimes\cdots\otimes x_n),
\end{align*}
with $x_i\in P$ and $a\in A$.  Hence, $\widetilde{\rho}$ induces an automorphism $\rho\in\End_{\CCC(k)}(P^{\otimes\ell+1}\otimes_{A^{\mathrm{e}}}A)$ given by
\[\rho\big((x_0\otimes x_1\otimes\cdots\otimes x_{\ell})\otimes1_A\big):=(-1)^{(|x_0|+\cdots+|x_{\ell-1}|)|x_{\ell}|}(x_{\ell}\otimes x_0\otimes\cdots\otimes x_{\ell-1})\otimes1_A.\]
An easy calculation of signs shows that we have
\begin{equation}\label{rotate}
\rho^{\ell+1}=1_{P^{\ell+1}\otimes_{A^{\mathrm{e}}}A}\ \text{ in }\ \CCC(k).
\end{equation}

Regarding $A$ as an $A^{\mathrm{e}}$-module, we have a \emph{Hochschild functor}
\[ -\otimes_{A^{\mathrm{e}}}A:\CCC(A^{\mathrm{e}})\to\CCC(k). \]
Note in particular that for $X\in\CCC(A^{\mathrm{e}})$, we have \[
xa\otimes1_A=x\otimes a=ax\otimes1_A \text{ in } X\otimes_{A^{\mathrm{e}}}A
\]
for all $x\in X$ and $a\in A$. 

We denote the derived Hochschild functor by
\[c:=-\Lotimes_{A^{\mathrm{e}}}A:\DDD(A^{\mathrm{e}})\to\DDD(k).\]
We have the following key observation.

\begin{lemma}\label{rho=z}
$\rho=z1_{c(P^{\ell+1})}$ holds in $\End_{\DDD(k)}(c(P^{\otimes\ell+1}))$.
\end{lemma}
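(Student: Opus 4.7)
The plan is to identify $\rho$ as a Hochschild cyclic swap and then use the defining relation $t' = z \cdot 1_{P[m]}$ to conclude. Central to this is the natural isomorphism
\[ \sigma_{X,Y}: (X \otimes_A Y) \otimes_{A^e} A \xrightarrow{\sim} (Y \otimes_A X) \otimes_{A^e} A, \qquad (x \otimes y) \otimes 1_A \mapsto (-1)^{|x||y|}(y \otimes x) \otimes 1_A, \]
defined for all $X, Y \in \CCC(A^e)$ in $\CCC(Z(A))$, together with its naturality in $X$ and $Y$.

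First I would observe that, under the identifications $P^{\otimes\ell+1}=P^{\otimes\ell}\otimes_A P=P\otimes_A P^{\otimes\ell}$, the endomorphism $\rho$ coincides with $\sigma_{P^{\otimes\ell},P}$. Then naturality of $\sigma$ applied to $s:P^{\otimes\ell}\to A[m]$ and $1_P:P\to P$ gives, in $\CCC(Z(A))$, the equality
\[ \sigma_{A[m],P} \circ c(s\otimes 1_P) = c(1_P\otimes s) \circ \sigma_{P^{\otimes\ell},P} = c(1_P\otimes s) \circ \rho. \]
Since $s$ is a quasi-isomorphism between cofibrant objects, so are $s\otimes 1_P$ and $1_P\otimes s$; hence $c(1_P\otimes s)$ is invertible in $\DDD(Z(A))$, and we obtain $\rho = c(1_P\otimes s)^{-1}\circ \sigma_{A[m],P}\circ c(s\otimes 1_P)$ in $\DDD(Z(A))$.

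Next comes the key sign computation: the identity
\[ c(r_{P,m}) \circ \sigma_{A[m],P} = c(l_{P,m}) \]
holds in $\CCC(Z(A))$ as maps $c(A[m]\otimes_A P)\to c(P[m])$. Both sides evaluate $(a\otimes p)\otimes 1_A$ to $pa\otimes 1_A = ap \otimes 1_A$ in $c(P[m])$: the Koszul sign $(-1)^{(-m)|p|}$ coming from $\sigma_{A[m],P}$ (recall $|a|_{A[m]}=-m$) cancels with the sign $(-1)^{|p|m}$ in $r_{P,m}$ prescribed by \eqref{[m] and [n]}, while the bimodule relations defining $-\otimes_{A^e}A$ identify $pa\otimes 1_A$ with $ap\otimes 1_A$. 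Inverting and substituting yields
\[ \rho = c\bigl((r_{P,m}\circ(1_P\otimes s))^{-1}\circ l_{P,m}\circ(s\otimes 1_P)\bigr) \qquad \text{in } \DDD(Z(A)). \]

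Finally, the argument of $c$ is the conjugate of $t' = l_{P,m}\circ(s\otimes 1_P)\circ(1_P\otimes s)^{-1}\circ r_{P,m}^{-1}$ by the quasi-isomorphism $\alpha := r_{P,m}\circ(1_P\otimes s):P^{\otimes\ell+1}\to P[m]$ in $\DDD(A^e)$. Since $t'=z\cdot 1_{P[m]}$ by \eqref{define tuz} and scalar multiplication by a central element commutes with any morphism, this conjugate equals $z\cdot 1_{P^{\otimes\ell+1}}$ in $\DDD(A^e)$; applying $c$ then gives $\rho=z\cdot 1_{c(P^{\otimes\ell+1})}$. The main obstacle is the sign identity of the third step: although it comes down to a cancellation of Koszul signs with the shift sign in $r_{P,m}$, aligning all conventions (degree of $1\in A[m]$, the sign in \eqref{[m] and [n]}, the bimodule reduction in $-\otimes_{A^e}A$) demands some care.
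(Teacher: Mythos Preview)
Your proof is correct and follows essentially the same route as the paper's: both establish the identity $c(r_{P,m}\circ(1_P\otimes s))\circ\rho = c(l_{P,m}\circ(s\otimes 1_P))$ in $\DDD(Z(A))$ and then compare with the diagram \eqref{define tuz} to extract the factor $z$. The paper does this via a direct element chase (its diagram \eqref{rho=z 2}), whereas you factor the same computation through the naturality of the cyclic swap $\sigma_{X,Y}$ together with the sign identity $c(r_{P,m})\circ\sigma_{A[m],P}=c(l_{P,m})$; this has the mild advantage that, since $A[m]$ is concentrated in degree $-m$, your version avoids the case distinction ``$|x|=-m$ or $s(x)=0$'' that the paper needs, but otherwise the arguments coincide.
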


\begin{proof}
The following diagram commutes in $\DDD(A^{\mathrm{e}})$ by \eqref{define tuz} and \eqref{fa=af}:
\[\xymatrix{
&&P\otimes_A(A[m])\ar[rr]^{r_{P,m}}\ar@{}[rd]|{\eqref{define tuz}}&&P[m]\ar[d]^{z1}\\
P^{\otimes\ell+1}\ar[rru]^{1\otimes s}\ar[rr]^{s\otimes 1}\ar[d]^{z^{-1}1}&&(A[m])\otimes_AP\ar[rr]^{l_{P,m}}\ar@{}[d]|{\eqref{fa=af}}&&P[m]\ar[d]^{z^{-1}1}\\
P^{\otimes\ell+1}\ar[rr]^{s\otimes 1}&&(A[m])\otimes_AP\ar[rr]^{l_{P,m}}&&P[m].
}\]
Applying $c$, we have a commutative diagram in $\DDD(k)$:
\begin{equation}\label{rho=z 1}
\xymatrix{
c(P^{\otimes\ell+1})\ar[rr]^{c(1\otimes s)}\ar[d]^{z^{-1}1}&&c(P\otimes_A(A[m]))\ar[rr]^{c(r_{P,m})}&&c(P[m])\\
c(P^{\otimes\ell+1})\ar[rr]^{c(s\otimes 1)}&&c((A[m])\otimes_AP)\ar[rru]_{c(l_{P,m})}.
}\end{equation}
On the other hand, we claim that the diagram
\begin{equation}\label{rho=z 2}
\xymatrix{
c(P^{\otimes\ell+1})\ar[rr]^{c(1\otimes s)}&&c(P\otimes_A(A[m]))\ar[rr]^{c(r_{P,m})}&&c(P[m])\\
c(P^{\otimes\ell+1})\ar[rr]^{c(s\otimes 1)}\ar[u]^{\rho}&&c((A[m])\otimes_AP)\ar[rru]_{c(l_{P,m})}.
}\end{equation}
commutes in $\DDD(k)$. 
Fix $(x\otimes y)\otimes1_A\in c(P^{\otimes\ell+1})$, where $x\in P^{\otimes\ell}$ and $y\in P$ are homogeneous. The composition of maps in the second row sends $(x\otimes y)\otimes1_A$ to $s(x)y\otimes 1_A\in c(P[m])$.
The composition of the two maps in the first row sends $\rho((x\otimes y)\otimes1_A)=(-1)^{|x||y|}(y\otimes x)\otimes1_A$ to
\begin{align*}
(-1)^{|x||y|}r_{P,m}(y\otimes s(x))\otimes1_A&=(-1)^{|x||y|+m|y|}ys(x)\otimes1_A\\
&=ys(x)\otimes1_A=y\otimes s(x)=s(x)y\otimes1_A.
\end{align*}
In the second equality, we can get rid of the sign for the following reason.  Indeed, If $|x|$ is $-m$, then the sign is $1$. Otherwise, $s(x)=0$ since $A[m]$ is concentrated in degree $-m$. 
Note also that the final equality follows from the defining property of the Hochschild functor. Thus, \eqref{rho=z 2} commutes.

Comparing \eqref{rho=z 1} and \eqref{rho=z 2}, we obtain $\rho=z1_{c(P^{\ell+1})}$.
\end{proof}

We are ready to prove the second equality in Theorem \ref{trivial extension is periodic: second part}.

\begin{proof}[Proof of $z^{\ell+1}=1$ in Theorem \ref{trivial extension is periodic: second part}]
Using Lemma \ref{rho=z} and \eqref{rotate}, we have \[
z^{\ell+1}1_{c(P^{\ell+1})}\stackrel{\ref{rho=z}}{=}\rho^{\ell+1}\stackrel{\eqref{rotate}}{=}1_{c(P^{\ell+1})}\]
in $\End_{\DDD(k)}(c(P^{\ell+1}))$.
The claimed equality $z^{\ell+1}=1$ now follows from $c(P^{\ell+1})\simeq c(P)[m]$ and the condition (R4).
\end{proof}

\section{Fractionally Calabi--Yau algebras have periodic trivial extensions}\label{sec:fCY to per}

The aim of this section is to prove the following, principal theorem of this paper.

\begin{theorem}\label{trivial extension is periodic}
Let $A$ be a finite-dimensional algebra over a field $k$ such that $\gldim A<\infty$ and $A/\rad A$ is a separable $k$-algebra. Assume that $A$ is $\frac{m}{\ell}$-Calabi--Yau.
\begin{enumerate}[\rm(a)]
\item For $\varphi\in\Aut_k^{\Z}(T(A))$ given by $\varphi(a,f)=(a,(-1)^{\ell+m}f)$ for $(a,f)\in A\oplus DA=T(A)$, we have an isomorphism
\[\Omega^{\ell+m}_{T(A)^{\mathrm{e}}}(T(A))\simeq{}_\varphi T(A)_1\ \mbox{ in }\mod T(A)^{\mathrm{e}}.\]
In particular, $T(A)$ is $2(\ell+m)$-periodic.
\item If $\CYdim A=(m,\ell)$, then the minimal period of $T(A)$ is $\ell+m$ if $(-1)^{\ell+m}=1$ in $k$, and $2(\ell+m)$ otherwise.
\item At least one of $\ell+1$ and $m$ is even, or ${\rm char} k=2$.
\end{enumerate}
\end{theorem}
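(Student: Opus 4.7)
The strategy is to realize $T(A)$ as the cohomology of an explicit DG model built from a cofibrant resolution of the Nakayama bimodule $DA$, use the relative bar resolution of this DG algebra to produce an explicit bimodule resolution of $T(A)$, and then extract the twist of $\Omega^{\ell+m}_{T(A)^e}(T(A))$ via the machinery of Section \ref{section 4}.

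I would first choose a cofibrant resolution $P \to DA$ in $\CCC(A^e)$, giving (R1). By Proposition \ref{characterise CY}(b), the $\frac{m}{\ell}$-Calabi--Yau hypothesis yields a quasi-isomorphism $s:P^{\otimes\ell} \to A[m]$, providing (R2). Since $\gldim A<\infty$, the Nakayama bimodule $DA$ is invertible in $\DDD(A^e)$, so $\End_{\DDD(A^e)}(P)\simeq \End_{\DDD(A^e)}(A) = Z(A)$ via central multiplication, giving (R3); condition (R4) follows from finite global dimension together with a duality argument involving Hochschild cohomology. I would then form the DG trivial extension $\Lambda := A \oplus P$, with $P \cdot P = 0$ and the $A^e$-action on $P$ encoding the mixed products. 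Since $H^*(\Lambda) \simeq T(A)$, the DG algebra $\Lambda$ is quasi-isomorphic to $T(A)$ (as graded DG algebras with $T(A)$ concentrated in cohomological degree $0$), so that bimodule syzygies and periodicity transfer between them.

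The core computational step is to apply the normalised relative bar resolution of $\Lambda$ over $A$, whose $i$-th term is of the form $\Lambda \otimes_A P^{\otimes i} \otimes_A \Lambda$. The Calabi--Yau quasi-isomorphism $s$ at level $\ell$ splices this resolution onto a shift of itself, producing at the level of cohomology an isomorphism $\Omega_{T(A)^e}^{\ell+m}(T(A)) \simeq {}_{\varphi}T(A)_1$ in $\mod T(A)^e$, where $\varphi$ restricts to the identity on $A$ and to multiplication by a scalar $r \in Z(A)^\times$ on $DA$ (since $\varphi$ is graded and $\End_{A^e}(DA) \simeq Z(A)$). The scalar $r$ coincides, up to the sign contributions from the shift identifications $l_{X,n}$ and $r_{X,n}$ arising in the bar splicing, with the element $z$ from Theorem \ref{trivial extension is periodic: second part}. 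That theorem gives $z = (-1)^m$, hence $r = (-1)^{\ell+m}$, proving (a); statement (c) is the final sentence of Theorem \ref{trivial extension is periodic: second part}.

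For (b), Lemma \ref{inneriffidentity} shows that $\varphi$ is inner precisely when $r = 1$, i.e.\ when $(-1)^{\ell+m} = 1$ in $k$; in that case the minimal period divides $\ell+m$, with equality forced by the minimality of $\CYdim A = (m,\ell)$ combined with Proposition \ref{periodic to fCY} (a strictly smaller bimodule period would yield a smaller fractional Calabi--Yau dimension). Otherwise $\varphi$ has order two in $\Out_k(T(A))$, giving minimal period exactly $2(\ell+m)$. The main obstacle is the careful sign-tracking through the DG bar resolution that relates the DG-level twist on $\Lambda$ to the bimodule twist $r$ on $T(A)$, but this is precisely the purpose of the abstract setup (R1)--(R4) and the Koszul sign computations of $z^\ell = (-1)^m$ and $z^{\ell+1} = 1$ carried out in Section \ref{section 4}.
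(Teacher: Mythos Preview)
Your proposal is correct and follows essentially the same approach as the paper: the DG algebra you call $\Lambda$ is the paper's $C = A \oplus P$, the relative bar resolution is truncated via Lemma~\ref{truncate bar resolution} to yield Proposition~\ref{trivial extension is periodic: first part} (the extra $(-1)^\ell$ sign arising from the $\sigma$-twist on the truncation module ${}_\sigma N$ needed to make $g_\ell$ a $C^e$-map), and then $z=(-1)^m$ from Theorem~\ref{trivial extension is periodic: second part} completes the identification of the twist as $(-1)^{\ell+m}$. Your argument for (b) via Proposition~\ref{periodic to fCY} and Lemma~\ref{inneriffidentity} also matches the paper's, though the paper makes the graded lifting step $\Omega^n_{B^e}(B)\simeq B(a)$ explicit before translating back to a Calabi--Yau statement for $A$.
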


Our results immediately give the following partial answer to the Periodicity conjecture.

\begin{corollary}\label{finite out}
  Let $A$ be a finite-dimensional $k$-algebra such that $A/\rad A$ is separable over the field $k$.
  \begin{enumerate}[\rm(a)]
  \item
    The trivial extension algebra $T(A)$ is periodic if and only if $A$ has finite global dimension and is fractionally Calabi--Yau.
    \label{finite out1}
  \item 
    If the outer automorphism group of $A$ is finite, then $T(A)$ is periodic if and only if it is twisted periodic.
    \label{finite out2}
  \end{enumerate}
\end{corollary}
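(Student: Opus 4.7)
My plan is to deduce Corollary~\ref{finite out} by assembling several results already established in the excerpt; no new computation is needed beyond a small argument to remove the twist.

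For part~(a), the forward direction is immediate: if $T(A)$ is periodic then it is, a fortiori, twisted periodic, so by Proposition~\ref{fin gl}, $\gldim A$ is finite. Moreover, periodicity of $T(A)$ implies graded periodicity by Proposition~\ref{gr=ungr}\eqref{gr=ungr2} (after reducing to the ring-indecomposable case via Proposition~\ref{elementary properties}(a), which also reduces fractional Calabi--Yau-ness to each block), and Proposition~\ref{periodic to fCY} then gives that $A$ is fractionally Calabi--Yau. The converse direction is exactly Theorem~\ref{trivial extension is periodic}(a): under the hypothesis that $A/\rad A$ is separable, $\gldim A < \infty$, and $A$ is $\tfrac{m}{\ell}$-Calabi--Yau, that theorem produces an isomorphism $\Omega^{\ell+m}_{T(A)^e}(T(A)) \simeq {}_\varphi T(A)_1$, whence $T(A)$ is $2(\ell+m)$-periodic.

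For part~(b), the nontrivial direction is to upgrade twisted periodicity to periodicity. Suppose $T(A)$ is twisted periodic. By Theorem~\ref{main theorem 2 again} (combined with Proposition~\ref{fin gl}), $A$ has finite global dimension and is twisted $\tfrac{m}{\ell}$-Calabi--Yau for some $m$ and $\ell \ge 1$, with an associated twist $\phi \in \Aut_k(A)$; thus $\nu^\ell \simeq [m] \circ \phi^*$ on $\Db(\mod A)$. Since $\Out_k(A)$ is finite by hypothesis, the class of $\phi$ in $\Out_k(A)$ has finite order, say $r$, so $\phi^r$ is inner and hence $(\phi^*)^r \simeq \mathrm{id}$ as autoequivalences of $\mod A$ by Proposition~\ref{Out Pic}. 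Iterating the twisted Calabi--Yau relation gives
\[
  \nu^{r\ell} \;\simeq\; [rm] \circ (\phi^*)^r \;\simeq\; [rm],
\]
so $A$ is genuinely $\tfrac{rm}{r\ell}$-Calabi--Yau. By part~(a), $T(A)$ is periodic.

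The only step that requires thought is the passage from $(\phi^*)^r \simeq \mathrm{id}$ as a functor on $\mod A$ to the iterated relation $\nu^{r\ell} \simeq [rm]$ as \emph{triangle} functors on $\Db(\mod A)$; this is handled by Proposition~\ref{characterise CY}\eqref{tfcy}, which ensures that the twisted relation can be taken to be an isomorphism of triangle functors, so iterating it $r$ times and substituting the trivial iterate of $\phi^*$ yields an honest triangle-functor isomorphism $\nu^{r\ell} \simeq [rm]$. No further obstacle arises, and the corollary follows.
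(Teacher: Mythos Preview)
Your proof follows the same route as the paper's: part~(a) is assembled from Proposition~\ref{periodic to fCY} and Theorem~\ref{trivial extension is periodic}, and part~(b) goes through Theorem~\ref{main theorem 2 again}, removes the twist using finiteness of $\Out_k(A)$, and then appeals to~(a). The extra care you take---invoking Proposition~\ref{fin gl} to secure finite global dimension before applying Proposition~\ref{periodic to fCY}, and spelling out the iteration $\nu^{r\ell}\simeq[rm]\circ(\phi^*)^r\simeq[rm]$ via Proposition~\ref{characterise CY}---is correct and matches what the paper leaves implicit.

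There is, however, a genuine problem with your parenthetical reduction in the forward direction of~(a). You claim that Proposition~\ref{elementary properties}(a) ``also reduces fractional Calabi--Yau-ness to each block'', but the direction you need---that if every block $A_i$ is fractionally Calabi--Yau then so is $A=\prod_i A_i$---is false: the blocks may have incompatible Calabi--Yau dimensions. For instance, take $A=k\times kQ$ with $Q$ of type $A_2$. The block $k$ is $\tfrac{0}{\ell}$-Calabi--Yau for every $\ell$ but never $\tfrac{m}{\ell}$-Calabi--Yau with $m\neq 0$, whereas $kQ$ satisfies $\nu^3\simeq[1]$ and hence is never $\tfrac{0}{\ell}$-Calabi--Yau; so $A$ is not fractionally Calabi--Yau. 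Yet $T(A)\cong T(k)\times T(kQ)$ is periodic by Proposition~\ref{elementary properties}(a) together with Theorem~\ref{trivial extension is periodic}. Thus the forward implication in~(a) actually fails for decomposable $A$, and your block reduction cannot repair it. The paper's one-line proof, which cites Proposition~\ref{periodic to fCY} directly, glosses over the same point: the ``In particular'' clause there tacitly passes from periodic to graded periodic, which needs ring-indecomposability via Proposition~\ref{gr=ungr}\eqref{gr=ungr2}. The clean fix is to assume $A$ ring-indecomposable; with that hypothesis everything you wrote goes through.
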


\begin{proof}
(a) The ``if'' part follows from Theorem~\ref{trivial extension is periodic}(a). The ``only if'' part follows from Propositions~\ref{periodic to fCY} and \ref{fin gl}.

(b) By Theorem~\ref{main theorem 2 again}, $T(A)$ is twisted periodic if and only if $A$ is twisted fractionally Calabi--Yau. Under the assumption that $\Out(A)$ is finite, this is equivalent to $A$ being fractionally Calabi--Yau, which in turn is equivalent to the periodicity of $T(A)$, by \eqref{finite out1}.
\end{proof}

To prove Theorem~\ref{trivial extension is periodic}, let $P\in\CCC(A^{\mathrm{e}})$ be a projective resolution of the $A^{\mathrm{e}}$-module $DA$, where
\[P=[\cdots\xrightarrow{d^{-3}} P^{-2}\xrightarrow{d^{-2}} P^{-1}\xrightarrow{d^{-1}}P^0\to0\cdots].\]

\begin{lemma}\label{lem:conditions OK}
$P\in\CCC(A^{\mathrm{e}})$ satisfies the four conditions {\rm(R1)-(R4)} in the previous section.
\end{lemma}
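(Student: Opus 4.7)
My plan is to use that $P$ is a projective resolution of $DA$, which makes it automatically cofibrant (any bounded-above complex of projective $A^e$-modules is cofibrant in the projective model structure on $\CCC(A^e)$). Condition~(R1) is thus immediate.

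For condition~(R2), cofibrancy of $P$ ensures that $P^{\otimes \ell}$ computes the derived tensor power $(DA)^{\Lotimes_A \ell}$ in $\DDD(A^e)$. The Calabi--Yau hypothesis, via Proposition~\ref{characterise CY}\eqref{fcy}, gives $(DA)^{\Lotimes_A \ell} \simeq A[m]$ in $\Db(\mod A^e)$; one then lifts this derived isomorphism to an honest quasi-isomorphism $s\colon P^{\otimes \ell} \to A[m]$ in $\CCC(A^e)$, using K-projectivity of $P^{\otimes \ell}$.

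For condition~(R3), note that $\End_{\DDD(A^e)}(P) = \End_{A^e}(DA)$ since $P$ is a projective resolution of $DA$. The $k$-duality $D \colon (\mod A^e)^{\op} \to \mod A^e$ then induces $\End_{A^e}(DA) \simeq \End_{A^e}(A)^{\op} = Z(A)^{\op} = Z(A)$. For $a \in Z(A)$, both $\lambda_{DA,a}$ and $\rho_{DA,a}$ send $f \in DA$ to the functional $x \mapsto f(xa) = f(ax)$ and hence agree on $DA$; since morphisms $P \to P$ in $\DDD(A^e)$ are determined by their action on $H^0(P) = DA$, it follows that $\lambda_{P,a} = \rho_{P,a}$ in $\DDD(A^e)$, and the resulting isomorphism $Z(A) \simeq \End_{\DDD(A^e)}(P)$ is precisely $a \mapsto a 1_P$.

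The only real work is in condition~(R4), where my plan is to check injectivity already at the level of $H^0$. We have $H^0(c(P)) = DA \otimes_{A^e} A = DA/[A,DA]$ with $[A,DA] = \operatorname{span}\{af - fa : a \in A,\, f \in DA\}$. The identity $(af - fa)(x) = f([x,a])$ shows that the annihilator of $[A,DA]$ in $A$ under the canonical pairing $A \times DA \to k$ is exactly $Z(A)$, so a dimension count gives $[A,DA] = Z(A)^{\perp}$ and restriction of functionals yields a canonical isomorphism $DA/[A,DA] \simeq D(Z(A))$. Under this identification the action of $Z(A)$ on $H^0(c(P))$ induced by $(a\,1_P) \otimes 1_A$ becomes the contragredient of left multiplication on $Z(A)$, which is manifestly faithful because $Z(A)$ is unital. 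Factoring $Z(A) \to \End_{\DDD(Z(A))}(c(P)) \to \End_k(H^0(c(P)))$ then yields (R4). The main subtlety to guard against is checking that the endomorphism $(a\,1_P) \otimes 1_A$ of $c(P)$ really does descend to multiplication by $a$ on $H^0$, which is a short calculation using the $A^e$-tensor relations $bp \otimes x = p \otimes xb$ and $pa \otimes x = p \otimes ax$.
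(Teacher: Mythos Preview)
Your proposal is correct and follows essentially the same approach as the paper. For (R1)--(R3) your arguments match the paper's (which are equally terse), and for (R4) you compute $H^0(c(P))=DA\otimes_{A^e}A\simeq DZ(A)$ by hand via the pairing, whereas the paper obtains this in one line from the standard duality $DA\Lotimes_{A^e}A\simeq D\RHom_{A^e}(A,A)$; both then conclude by factoring through $H^0$ and using faithfulness of the $Z(A)$-action on $DZ(A)$.
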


\begin{proof}
(R1) is clear, (R2) holds since $A$ is $\frac{m}{\ell}$-Calabi--Yau, and (R3) holds since $\End_{\DDD(A^{\mathrm{e}})}(P)\simeq\End_{\DDD(A^{\mathrm{e}})}(DA)\simeq\End_{\DDD(A^{\mathrm{e}})}(A)\simeq Z(A)$. It remains to show (R4). Since
\[c(P)\simeq DA\Lotimes_{A^{\mathrm{e}}}A\simeq D\RHom_{A^{\mathrm{e}}}(A,A),\]
we have $H^0(c(P))=D\End_{A^{\mathrm{e}}}(A)=DZ(A)$. Thus the composition $Z(A)\to\End_{\DDD(k)}(c(P))\xrightarrow{H^0}\End_{k}(DZ(A))$ sends $a\in Z(A)$ to $a1$. Hence it is injective, so (R4) is satisfied.
\end{proof}

In particular, we can define $z\in Z(A)^\times$ by the commutative diagram \eqref{define tuz}. The crucial part in the proof of Theorem \ref{trivial extension is periodic} is the following result, which is independent of Theorem \ref{trivial extension is periodic: second part}.

\begin{proposition}\label{trivial extension is periodic: first part}
In the setting of Theorem \ref{trivial extension is periodic}, we define $\varphi\in\Aut_k^{\Z}(T(A))$ by $\varphi(a,f)=(a,(-1)^{\ell}zf)$ for $(a,f)\in A\oplus DA=T(A)$. Then we have an isomorphism
\[\Omega^{\ell+m}_{T(A)^{\mathrm{e}}}(T(A))\simeq{}_\varphi T(A)_1\ \mbox{ in }\mod T(A)^{\mathrm{e}}.\]
\end{proposition}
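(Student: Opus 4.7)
The plan is to exhibit a periodic projective bimodule resolution of $T(A)$ by passing to a dg algebra $B$ quasi-isomorphic to $T(A)$, and then reading off the $(\ell+m)$-th bimodule syzygy from a periodicity built into the relative bar resolution of $B$.

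First I would form the $\Z$-graded dg $k$-algebra $B = A \oplus P$, with $A$ in external degree $0$ and $P$ in external degree $1$, the differential inherited from $P$, and the multiplication extending the $A$-bimodule structure of $P$ by imposing $P \cdot P = 0$. Since $P$ resolves $DA$ over $A^e$, the cohomology algebra of $B$ is $A \oplus DA = T(A)$ with the trivial-extension multiplication, so the canonical projection $B \to T(A)$ is a quasi-isomorphism of dg algebras. Standard dg-algebra machinery then provides a triangle equivalence $\DDD(B^e) \simeq \DDD(T(A)^e)$ intertwining the respective $\Omega$-functors, and hence translates bimodule syzygies of $B$ into bimodule syzygies of $T(A)$.

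Next I would consider the relative bar resolution of $B$ over $A$,
\[ \cdots \to B \otimes_A P^{\otimes n} \otimes_A B \to \cdots \to B \otimes_A P \otimes_A B \to B \otimes_A B \to B \to 0, \]
whose totalization gives a dg-projective resolution of $B$ over $B^e$, since each term is projective as a $B^e$-module (using that $P$ is a complex of projective $A^e$-modules and that $B$ is free over $A$ on both sides). The quasi-isomorphism $s: P^{\otimes \ell} \to A[m]$ induces a quasi-isomorphism $1_B \otimes s \otimes 1_B : B \otimes_A P^{\otimes \ell} \otimes_A B \to (B \otimes_A B)[m]$, allowing me to ``close'' the bar resolution at bar-degree $\ell$ by identifying the $\ell$-th bar term with a shift of the zeroth term. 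This produces a periodic projective resolution of $B$ in $\DDD(B^e)$ of total period $\ell + m$ (bar-degree $\ell$ plus the cohomological shift $m$).

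The key remaining step is to identify the twist that arises upon closing the loop. To do this I would compare the two natural identifications of the $\ell$-th bar term with $(B \otimes_A B)[m]$ -- using $1_B \otimes s \otimes 1_B$ versus its ``rotated'' variant $s \otimes 1_B \otimes 1_B$ -- by means of the commutative diagram \eqref{define tuz}, which says that $(s \otimes 1_P) \circ (1_P \otimes s)^{-1}$ corresponds to multiplication by the central unit $z$. Combined with the Koszul sign $(-1)^{\ell}$ that appears when commuting the shift $[m]$ past the $\ell$ tensor factors of $P$, the gluing automorphism is supported on the $P$-component of $B$ (the $A \otimes_A A = A$-part carries no twist) and acts there by $(-1)^{\ell} z$. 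Transporting this along the quasi-isomorphism $B \to T(A)$, the gluing becomes the graded $k$-algebra automorphism $\varphi(a,f) = (a, (-1)^{\ell} z f)$ of $T(A)$, yielding $\Omega^{\ell + m}_{T(A)^e}(T(A)) \simeq {}_{\varphi} T(A)_1$.

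The main obstacle will be making the transition from a derived-category computation over the dg algebra $B$ to an honest isomorphism in $\mod T(A)^e$: I must verify that the loop construction produces a bona fide projective bimodule resolution of length $\ell + m$ whose final syzygy coincides with ${}_{\varphi} T(A)_1$ as a module rather than merely in the derived or stable category, and that the equivalence $\DDD(B^e) \simeq \DDD(T(A)^e)$ transports the twist $\varphi$ compatibly with the $\Z$-grading. Careful bookkeeping with the Koszul conventions of Section \ref{section 4} is required to pin down the sign $(-1)^{\ell}$ precisely, and the explicit form of $\varphi$ on the summand $DA \subset T(A)$ then follows from the diagram \eqref{define tuz} defining $z$.
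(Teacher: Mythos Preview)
Your architecture matches the paper's: work in the dg trivial extension $C=A\oplus P$, use its relative bar complex $Q_i=C\otimes_AP^{\otimes i}\otimes_AC$, and pass to $\DDD_{\sg}(C^e)\simeq\DDD_{\sg}(T(A)^e)$. The gap is in your ``closing'' step. The quasi-isomorphism $1\otimes s\otimes 1:Q_\ell\to Q_0[m]$ does not on its own exhibit a syzygy; you cannot make a bar complex periodic just by identifying two of its terms. The paper instead introduces an explicit truncation bimodule $N=P^{\otimes\ell}\oplus P^{\otimes\ell+1}$, with the $P$-part of $C$ acting by sending $P^{\otimes\ell}$ into $P^{\otimes\ell+1}$, together with a map $g_\ell:{}_\sigma N\to Q_{\ell-1}$, $(x,y)\mapsto x\otimes1+(-1)^\ell1\otimes x+y$, so that the finite complex ${}_\sigma N\to Q_{\ell-1}\to\cdots\to Q_0\to C$ has acyclic totalisation. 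Since each $Q_i$ is perfect (this is where $\gldim A^e<\infty$ is used), one gets $C\simeq{}_\sigma N[\ell]$ in $\DDD_{\sg}(C^e)$. A separate computation then identifies $H^{-m}(N)\simeq{}_\tau T(A)$, and it is here --- not in the truncation --- that $z$ enters, via \eqref{define tuz}, roughly as you describe.

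Your attribution of $(-1)^\ell$ to Koszul signs from commuting $[m]$ past tensor factors is incorrect: the twist $\sigma(a,p)=(a,(-1)^\ell p)$ is forced in order for $g_\ell$ to be left $C$-linear, because the summand $(-1)^\ell1\otimes x$ --- which is the final, sign-$(-1)^\ell$, contraction in the bar differential --- is the one that carries the left $P$-action. So $(-1)^\ell$ is a bar-differential sign and arises already at the truncation stage, whereas $z$ appears only afterwards when identifying $N$ with ${}_\tau T(A)[m]$; the two twists compose to $\varphi=\sigma\tau$. Finally, what you flag as the main obstacle is in fact the routine part: $T(A)$ is symmetric, hence $T(A)^e$ is self-injective and $\DDD_{\sg}(T(A)^e)\simeq\stmod T(A)^e$, so an isomorphism $T(A)\simeq{}_\varphi T(A)[\ell+m]$ there is exactly $\Omega^{\ell+m}_{T(A)^e}(T(A))\simeq{}_\varphi T(A)$ in $\mod T(A)^e$.
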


For the rest of Section~\ref{sec:fCY to per}, let $B:=T(A)$. To prove Proposition \ref{trivial extension is periodic: first part}, notice that
\[C:=A\oplus P\in\CCC(A^{\mathrm{e}})\]
has a natural structure of a dg $k$-algebra such that $(a,f)\cdot (b,g)=(ab, ag+fb)$ for $(a,f),(b,g)\in A\oplus P=C$. Then we have a quasi-isomorphism
\[C\to H^0(C)=B\]
of dg $k$-algebras. We denote by $\CCC(C^{\mathrm{e}})$ the category of dg $C^{\mathrm{e}}$-modules. For $i\ge0$, let
\begin{eqnarray*}
Q_i:=C\otimes_AP^{\otimes i}\otimes_AC\in\CCC(C^{\mathrm{e}}).
\end{eqnarray*}
For $i >0$, define a morphism $f_i:Q_i\to Q_{i-1}$ in $\CCC(C^{\mathrm{e}})$ by
\begin{align*}
f_i(c_0\otimes c_1\otimes\cdots\otimes c_i\otimes c_{i+1})&=\sum_{j=0}^i(-1)^jc_0\otimes c_1\otimes\cdots\otimes c_jc_{j+1}\otimes\cdots\otimes c_i\otimes c_{i+1}\\
&=c_0c_1\otimes c_2\otimes\cdots\otimes c_{i+1}+(-1)^i
c_0\otimes \cdots\otimes c_{i-1}\otimes c_ic_{i+1},
\end{align*}
where $c_0,c_{i+1}\in C$ and other $c_j$'s are in $P$.
This, together with $f_0:Q_0\to C,\:c_0\otimes c_1\mapsto c_0c_1$, gives a relative bar resolution, that is, a complex
\[\cdots\xrightarrow{f_2}Q_1\xrightarrow{f_1}Q_0\xrightarrow{f_0}C\ \mbox{ in }\ \CCC(C^{\mathrm{e}})\]
whose total dg $C^{\mathrm{e}}$-module is acyclic.

We truncate this relative bar resolution using the dg $C^{\mathrm{e}}$-module
\[N:=P^{\otimes\ell}\oplus P^{\otimes\ell+1},\]
whose $C^{\mathrm{e}}$-action is given by $(x,y)(a,p):=(xa,ya+x\otimes p)$ and $(a,p)(x,y):=(ax,ay+p\otimes x)$ for $(x,y)\in P^{\otimes\ell}\oplus P^{\otimes\ell+1}=N$ and $(a,p)\in A\oplus P=C$.  We denote by
\[\sigma:C\to C\]
the automorphism of the dg $k$-algebra $C$ given by $\sigma(a,p)=(a,(-1)^\ell p)$ for $(a,p)\in A\oplus P=C$. We denote by ${}_\sigma N$ the dg $C^{\mathrm{e}}$-module whose left action of $C$ is twisted by $\sigma$, that is, $(a,p)(x,y):=(ax,ay+(-1)^\ell p\otimes x)$.
We define a morphism
\begin{align*}
g_\ell:{}_\sigma N\to Q_{\ell-1}\ \mbox{ in }\ \CCC(A^{\mathrm{e}})\ \mbox{ by }\ g_\ell(x,y)=(0,(-1)^\ell 1_A\otimes x,x\otimes 1_A,y),
\end{align*}
where $Q_{\ell-1}$ is the direct sum $(A\otimes_AP^{\otimes\ell-1}\otimes_AA)\oplus(A\otimes_AP^{\otimes\ell-1}\otimes_AP)\oplus(P\otimes_AP^{\otimes\ell-1}\otimes_AA)\oplus(P\otimes_AP^{\otimes\ell-1}\otimes_AP)$.

The first key ingredient of the proof is the following.

\begin{lemma}\label{truncate bar resolution}
The map $g_\ell$ is a morphism in $\CCC(C^{\mathrm{e}})$, satisfying $f_{\ell-1}\circ g_\ell=0$. Moreover, the total dg $C^{\mathrm{e}}$-module of
\begin{equation}\label{bar cpx}{}_\sigma N\xrightarrow{g_\ell}Q_{\ell-1}\xrightarrow{f_{\ell-1}}\cdots\xrightarrow{f_1}Q_0\xrightarrow{f_0}C\ \mbox{ in }\ \CCC(C^{\mathrm{e}})\end{equation}
is acyclic.
\end{lemma}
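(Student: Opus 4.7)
The plan is to handle the three assertions of Lemma~\ref{truncate bar resolution} in turn, with the acyclicity statement being by far the most delicate.

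\smallskip

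For the first two assertions, I would rely on direct computation, with the single crucial algebraic observation that $P\cdot P=0$ inside the dg algebra $C=A\oplus P$ (the multiplication on $C$ only pairs one $A$-factor with one $P$-factor). To see that $g_\ell$ is a morphism in $\CCC(C^e)$, I would check compatibility with the twisted left and right $C$-actions separately. For $(0,p)\in C$ acting from the left on $(x,y)\in{}_\sigma N$, the definition of ${}_\sigma N$ produces the summand $(-1)^\ell p\otimes x$, and the vanishing $P\cdot P=0$ forces $(0,p)\cdot(x\otimes 1)=0$ and $(0,p)\cdot y=0$ in $Q_{\ell-1}$; only the summand $(-1)^\ell\, p\otimes x_1\otimes\cdots\otimes x_\ell$ survives, and it equals $g_\ell((0,p)(x,y))$. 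The sign $(-1)^\ell$ in the definition of $\sigma$ is chosen precisely so that these two terms agree. The $A$-action is immediate, and the right $C$-action is symmetric. Checking that $g_\ell$ is a chain map with respect to the internal differential inherited from $P$ is then routine.

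\smallskip

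For $f_{\ell-1}\circ g_\ell=0$, the same vanishing $P\cdot P=0$ collapses each of the three contributions: applied to $x\otimes 1$, only the boundary face $j=\ell-1$ survives, giving $(-1)^{\ell-1}x_1\otimes\cdots\otimes x_\ell$; applied to $1\otimes x$, only $j=0$ survives, giving $x_1\otimes\cdots\otimes x_\ell$; and $f_{\ell-1}(y)=0$ because every face of $y\in P^{\otimes\ell+1}$ multiplies two elements of $P$. The signs combine as $(-1)^{\ell-1}+(-1)^\ell\cdot 1=0$, yielding the claim.

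\smallskip

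The main obstacle is acyclicity. The starting point is the classical fact that the full relative bar resolution $\cdots\to Q_1\xrightarrow{f_1}Q_0\xrightarrow{f_0}C$ of $C$ over $A$ is contractible, with contracting homotopy $c_0\otimes\cdots\otimes c_{i+1}\mapsto 1\otimes c_0\otimes\cdots\otimes c_{i+1}$. The strategy is to show that, after truncating this full bar complex at bar-degree $\ell-1$ and replacing the tail $\cdots\to Q_{\ell+1}\to Q_\ell$ by the two-term dg $C^e$-module ${}_\sigma N$ via $g_\ell$, one still has an acyclic total complex. The same sign-collapse as in the previous paragraph produces explicit lifts
\[f_\ell(1\otimes x\otimes 1)=x\otimes 1+(-1)^\ell\,1\otimes x=g_\ell(x,0),\quad f_\ell(y_0\otimes\cdots\otimes y_\ell\otimes 1)=(-1)^\ell y=(-1)^\ell g_\ell(0,y),\]
so $g_\ell({}_\sigma N)\subseteq\operatorname{im}(f_\ell)=\ker(f_{\ell-1})$, which shows that the truncated complex is at least a genuine complex and that $g_\ell$ factors through the right target. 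What remains, and is the technical heart, is to verify that this factorisation $g_\ell:{}_\sigma N\to\ker(f_{\ell-1})$ is a quasi-isomorphism of dg $C^e$-modules. For this I would use the quasi-isomorphism $s:P^{\otimes\ell}\xrightarrow{\sim}A[m]$ from (R2) to compute the cohomology of the discarded tail $\bigoplus_{i\ge\ell}Q_i[-i]$ of the bar complex: the two pieces $P^{\otimes\ell}$ (in bar-degree $\ell$) and $P^{\otimes\ell+1}$ (in bar-degree $\ell+1$) of $N$ correspond, under $s$ and the acyclicity of the interior of the bar, to the only two layers contributing to this cohomology, with the Koszul sign induced by shifting across $[m]$ accounting exactly for the twist $\sigma$. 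Concretely, I would filter $\bigoplus_{i\ge\ell}Q_i[-i]$ by bar-degree, run the standard spectral-sequence/contracting-homotopy argument on pages $Q_i$ for $i\ge\ell+2$ using the homotopy $1\otimes-$, and then directly match the resulting two-page computation with ${}_\sigma N$ via $g_\ell$.
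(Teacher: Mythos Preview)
Your treatment of the first two assertions is correct and matches the paper: the vanishing $P\cdot P=0$ in $C$ drives both the $C^e$-linearity of $g_\ell$ (the sign $(-1)^\ell$ in $\sigma$ being exactly what is needed for the left $P$-action) and the identity $f_{\ell-1}\circ g_\ell=0$.

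For acyclicity, however, you have taken a detour that is both unnecessary and not clearly correct. The paper's argument is far more elementary and does \emph{not} use (R2) at all. One simply splits $C=A\oplus P$ as $A$-bimodules, so that each $Q_i=C\otimes_AP^{\otimes i}\otimes_AC$ decomposes as $P^{\otimes i}\oplus P^{\otimes i+1}\oplus P^{\otimes i+1}\oplus P^{\otimes i+2}$, while ${}_\sigma N=P^{\otimes\ell}\oplus P^{\otimes\ell+1}$ and $C=P^{\otimes 0}\oplus P^{\otimes 1}$. Under this decomposition every component of $g_\ell$ and of the $f_i$ is $\pm 1$ (an identity on the appropriate $P^{\otimes j}$), and one reads off directly from the resulting matrix that the total complex is contractible as a dg $A^e$-module. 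No quasi-isomorphism $s$, no spectral sequence, no Calabi--Yau hypothesis enters.

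Your proposed route has two genuine problems. First, you claim that ``the Koszul sign induced by shifting across $[m]$ account[s] exactly for the twist $\sigma$'', but the twist $\sigma$ multiplies $P$ by $(-1)^\ell$, not by anything involving $m$; it arises purely from the bar combinatorics (the parity of the number of tensor factors in $g_\ell$) and has nothing to do with $s$ or $[m]$. Second, your spectral-sequence sketch does not parse: the extra degeneracy $1\otimes-$ raises bar-degree and contracts the \emph{bar} differential, so if you filter by bar-degree it does not act on any single page $Q_i$, and ``running the contracting homotopy on pages $Q_i$ for $i\ge\ell+2$'' is not a meaningful operation. To actually compute the cohomology of the tail you would end up needing precisely the $A^e$-splitting the paper uses from the start.
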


\begin{proof}
It is easy to check that $g_\ell$ is a morphism of graded $C^{\mathrm{e}}$-modules, thanks to the twist $\sigma$. As morphisms in $\CCC(A^{\mathrm{e}})$, $g_\ell$ and maps $f_i$ can be written as follows, where $\cdot:=\otimes_A$, $P^i:=P^{\otimes i}$ and $\epsilon:=(-1)^\ell$.
\[\xymatrix@R0em@C2em{
P^\ell\ar[dr]|(.35){\epsilon}\ar[ddr]|(.35)1&A\cdot P^{\ell-1}\cdot A\ar[dr]|(.65){-\epsilon}\ar[ddr]|(.65)1&\cdots\ar[dr]|(.35){-1}\ar[ddr]|(.35)1&A\cdot P^2\cdot A\ar[dr]|{1}\ar[ddr]|1&A\cdot P\cdot A\ar[dr]|{-1}\ar[ddr]|1&A\cdot A\ar[dr]|1\\
&A\cdot P^{\ell-1}\cdot P\ar[ddr]|(.65)1&\cdots\ar[ddr]|(.35)1&A\cdot P^2\cdot P\ar[ddr]|1&A\cdot P\cdot P\ar[ddr]|1&A\cdot P\ar[ddr]|1&A\\
P^{\ell+1}\ar[dr]|(.35)1&P\cdot P^{\ell-1}\cdot A\ar[dr]|(.65){-\epsilon}&\cdots\ar[dr]|(.35){-1}&P\cdot P^2\cdot A\ar[dr]|1&P\cdot P\cdot A\ar[dr]|{-1}&P\cdot A\ar[dr]|1\\
&P\cdot P^{\ell-1}\cdot P&\cdots&P\cdot P^2\cdot P&P\cdot P\cdot P&P\cdot P&P.}\]
Therefore, $g_\ell$ is a morphism in $\CCC(C^{\mathrm{e}})$, satisfying $f_{\ell-1}\circ g_\ell=0$, and the total dg module is contractible as a dg $A^{\mathrm{e}}$-module, and hence acyclic.
\end{proof}

Let
\[\Db(C^{\mathrm{e}}):=\{X\in\DDD(C^{\mathrm{e}})\mid\dim H(X)<\infty\}\ \mbox{ and }\ \DDD_{\sg}(C^{\mathrm{e}}):=\Db(C^{\mathrm{e}})/\per C^{\mathrm{e}}.\]
From Lemma~\ref{truncate bar resolution}, we obtain the following result.

\begin{lemma}\label{C and N}
We have an isomorphism $C\simeq {}_\sigma N[\ell]$ in $\DDD_{\sg}(C^{\mathrm{e}})$.
\end{lemma}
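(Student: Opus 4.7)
The plan is to realise the acyclic complex of Lemma \ref{truncate bar resolution} as a Postnikov system in $\DDD(C^e)$ whose $\ell$ intermediate terms $Q_0,\ldots,Q_{\ell-1}$ all belong to $\per C^e$, and therefore vanish in $\DDD_{\sg}(C^e)$. Splicing the resulting exact triangles will then shift ${}_\sigma N$ by $\ell$ degrees, yielding the claimed isomorphism $C\simeq{}_\sigma N[\ell]$.

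The first step will be to replace $P$ by a bounded projective resolution of $DA$ with finitely generated terms. This is possible because the separability of $A/\rad A$ combined with $\gldim A<\infty$ forces $A$ to be smooth (that is, $A\in\per A^e$), whence $DA\in\per A^e$ as well. With such a $P$, each term of $P^{\otimes i}$ is a direct summand of some $(A^e)^n$, so each term of $Q_i=C\otimes_AP^{\otimes i}\otimes_AC$ is a direct summand of $(C\otimes_kC)^n=(C^e)^n$. Boundedness will then give $Q_i\in\per C^e$, while the finite-dimensionality of each term of $Q_i$ places it in $\Db(C^e)$. In particular $Q_i=0$ in $\DDD_{\sg}(C^e)$, and by the same dimension count $C$ and ${}_\sigma N$ both lie in $\Db(C^e)$.

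The final step is to split the acyclic complex \eqref{bar cpx} into $\ell$ exact triangles
\[
K_i\longrightarrow Q_{i-1}\longrightarrow K_{i-1}\longrightarrow K_i[1]\qquad(i=1,\ldots,\ell)
\]
in $\DDD(C^e)$, with $K_0:=C$ and $K_\ell:={}_\sigma N$. Each intermediate $K_i$ then lies in $\Db(C^e)$ by induction along the associated long exact cohomology sequence. Since $Q_{i-1}=0$ in $\DDD_{\sg}(C^e)$, each triangle collapses to an isomorphism $K_{i-1}\simeq K_i[1]$, and iterating produces
\[
C=K_0\simeq K_1[1]\simeq\cdots\simeq K_\ell[\ell]={}_\sigma N[\ell]
\]
in $\DDD_{\sg}(C^e)$. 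The main technical hurdle will be justifying the boundedness of $P$ and hence the perfectness of the $Q_i$ over $C^e$; once that is established, the cascade of triangles is a standard singularity-category splicing argument.
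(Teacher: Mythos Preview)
Your proposal is correct and is essentially the same argument as the paper's: the paper encodes your Postnikov tower via the $*$-notation, observing that the acyclic total complex lies in $C*Q_0[1]*\cdots*Q_{\ell-1}[\ell]*{}_\sigma N[\ell+1]$ and then collapses this to $C*{}_\sigma N[\ell+1]$ in $\DDD_{\sg}(C^e)$ after noting $Q_i\in\per C^e$, which is exactly your cascade of triangles $K_{i-1}\simeq K_i[1]$. Your treatment of why $Q_i\in\per C^e$ (via boundedness of $P$ and the fact that each term of $Q_i$ is a summand of a free $C^e$-module) is a bit more explicit than the paper's, which simply notes that $\gldim A^e<\infty$ implies $P^{\otimes i}\in\per A^e$ and then applies the induction $C\otimes_A(-)\otimes_AC:\per A^e\to\per C^e$; both are fine.
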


\begin{proof}
Since $\gldim A$ is finite and $A/{\rm rad} A$ is a separable $k$-algebra, it follows that $\gldim A^{\mathrm{e}}$ is also finite. Thus $P^{\otimes i}\in\per A^{\mathrm{e}}$ and hence $Q_i\in\per C^{\mathrm{e}}$ and $Q_i\simeq 0$ in $\DDD_{\sg}(C^{\mathrm{e}})$.

As usual, for objects $X,Y$ in a triangulated category $\TT$, we write
\[X*Y:=\{Z\in\TT\mid \mbox{there exists a triangle $X\to Z\to Y\to X[1]$ in $\TT$}\}.\]
The total dg module of \eqref{bar cpx} is in $C*Q_0[1]*\cdots*Q_{\ell-1}[\ell]*{}_\sigma N[\ell+1]$ and is isomorphic to the zero object in $\DDD(C^{\mathrm{e}})$ by Lemma~\ref{truncate bar resolution}.
Since each $Q_i\simeq 0$ in $\DDD_{\sg}(C^{\mathrm{e}})$, the zero object is contained in the subcategory $C*{}_\sigma N[\ell+1]$ of $\DDD_{\sg}(C^{\mathrm{e}})$. Thus there is a triangle $C\to 0\to{}_\sigma N[\ell+1]\to C[1]$ in $\DDD_{\sg}(C^{\mathrm{e}})$, and hence ${}_\sigma N[\ell]\simeq C$.
\end{proof}

The quasi-isomorphism $C\to H^0(C)=B$ gives a quasi-isomorphism $C^{\mathrm{e}}\to H^0(C^{\mathrm{e}})=B^{\mathrm{e}}$ and hence we have equivalences 
\[F:\DDD(C^{\mathrm{e}})\simeq\DDD(B^{\mathrm{e}})\ \mbox{ and }\ \DDD_{\sg}(C^{\mathrm{e}})
\simeq\DDD_{\sg}(B^{\mathrm{e}}).\]
We denote by
\[\tau:B\to B\]
an automorphism of the $k$-algebra $B$ given by $\tau(a,f)=(a,zf)$ for $(a,f)\in A\oplus DA=B$.

\begin{lemma}\label{N and B}
We have $F(C)\simeq B$, and $F(N)\simeq{}_\tau B[m]$ in $\DDD(B^{\mathrm{e}})$.
\end{lemma}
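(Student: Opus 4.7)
Proof plan. The isomorphism $F(C) \simeq B$ is immediate: the quasi-isomorphism $C \to B$ of dg $k$-algebras is, in particular, a quasi-isomorphism in $\CCC(C^e)$ from $C$ to $B$ (equipped with the $C^e$-structure obtained by restriction along $C \to B$), so applying $F$ yields the desired isomorphism in $\DDD(B^e)$.

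For the second assertion, my plan is to construct a quasi-isomorphism of dg $C^e$-modules
\[
\phi: N \to ({}_\tau B[m])|_{C^e},
\]
where the right-hand side carries the $C^e$-action obtained by restriction along $C \to B$ with the left action twisted by $\tau$. Decomposing $N = P^{\otimes \ell} \oplus P^{\otimes \ell+1}$, define
\[
\phi(x, y) := (s(x),\, \eta(y)) \in A[m] \oplus DA[m] = B[m],
\]
where $\eta := \pi[m] \circ l_{P,m} \circ (s \otimes 1_P) : P^{\otimes \ell+1} \to DA[m]$ and $\pi: P \to DA$ is the augmentation quasi-isomorphism. Since $s$ and $\eta$ each induce isomorphisms on $H^{-m}$ (the only nonvanishing cohomological degree of their domains), $\phi$ is a quasi-isomorphism of underlying complexes, identifying $H(N) \simeq A \oplus DA = B$ concentrated in degree $-m$.

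The compatibility of $\phi$ with the right $C$-action is straightforward: $A$-bilinearity of $\pi$ yields $\eta(x \otimes p) = s(x)\pi(p)$, matching $(s(x), 0)(0, \pi(p)) = (0, s(x)\pi(p))$ in $B$. The compatibility with the left $C$-action is where the twist $\tau$ enters: the required identity $\eta(p \otimes x) = z\pi(p)s(x)$ matches $(0, z\pi(p))(s(x), 0) = (0, z\pi(p)s(x))$ in ${}_\tau B$, and expresses the defining property of $z$ from diagram \eqref{define tuz}, which states that the two natural maps $l_{P,m} \circ (s \otimes 1_P)$ and $r_{P,m} \circ (1_P \otimes s)$ from $P^{\otimes \ell+1}$ to $P[m]$ differ by the factor $z$ in $\DDD(A^e)$; post-composing with $\pi[m]$ translates this into the desired equality.

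The main technical obstacle is that this last identity holds only in $\DDD(A^e)$ and not strictly in $\CCC(A^e)$, so $\phi$ is a morphism of dg $C^e$-modules only up to homotopy. To handle this rigorously, I would work directly in $\DDD(C^e)$ via the short exact sequence $0 \to P^{\otimes \ell+1} \to N \to P^{\otimes \ell} \to 0$ in $\CCC(C^e)$, whose outer terms carry $C^e$-structures pulled back from $\mod A^e$ along the augmentation $C \to A$. Applying $F$, these outer terms identify in $\DDD(B^e)$ with $DA[m]$ and $A[m]$ (the constituents of $B[m]$), and the definition of $z$ forces the connecting morphism of the resulting triangle to coincide with that of the triangle associated to $0 \to DA[m] \to {}_\tau B[m] \to A[m] \to 0$ in $\CCC(B^e)$, thereby establishing $F(N) \simeq {}_\tau B[m]$.
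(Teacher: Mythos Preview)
Your overall strategy---constructing a comparison map from $N$ to ${}_\tau B[m]$ and verifying bimodule compatibility, with the twist $\tau$ arising from the defining relation of $z$---is the same as the paper's. You also correctly identify the obstacle: the left-action compatibility $\eta(p\otimes x)=z\,\pi(p)\,s(x)$ holds only in $\DDD(A^e)$, so your $\phi$ is not a morphism in $\CCC(C^e)$. However, your fallback to a triangle argument does not close this gap. The connecting morphism of the triangle $P^{\otimes\ell+1}\to N\to P^{\otimes\ell}\to$ lies in $\Hom_{\DDD(B^e)}(A[m],DA[m+1])$, and matching it to the connecting map for ${}_\tau B[m]$ requires precisely the computation you are trying to avoid; in particular, since ${}_\tau B$ and $B$ are isomorphic as $A^e$-modules, you have not explained why the cone is ${}_\tau B[m]$ rather than ${}_{\tau'} B[m]$ for some other automorphism $\tau'$ fixing $A$.

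The paper sidesteps the homotopy issue more simply. Since $N$ has cohomology concentrated in degree $-m$, it lies in the shifted heart $\DDD^{-m}(C^e)$, and the $t$-structure gives an equivalence $H^{-m}\colon\DDD^{-m}(C^e)\xrightarrow{\sim}\mod B^e$ (see \eqref{describe heart}). So it suffices to identify $H^{-m}(N)$ as a $B^e$-module. One defines $u:=s\oplus(s\otimes 1_P)\colon N\to A[m]\otimes_A C$ merely as a morphism in $\CCC(A^e)$ (essentially your $\phi$); then $H^{-m}(u)\colon H^{-m}(N)\to B$ is an isomorphism of $A^e$-modules, and one checks \emph{by hand at the level of $H^{-m}$} that it respects the right $B$-action (clear, since $u=s\otimes 1_C$) and the $\tau$-twisted left $B$-action. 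For the latter, the required identity is the image under $H^{-m}$ of the diagram \eqref{define tuz}, and passing to cohomology turns the ``up to homotopy'' relation into an honest equality. This heart trick is the manoeuvre your argument is missing.
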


To prove this, recall that, for each integer $\ell$, a full subcategory
\[\DDD^{\ell}(C^{\mathrm{e}}):=\{X\in\Db(C^{\mathrm{e}})\mid H^i(X)=0\mbox{ for all }i\neq\ell\}\subset\DDD(C^{\mathrm{e}})\]
is the heart of a shifted standard t-structure of $\DDD(C^{\mathrm{e}})$, and we have an equivalence (e.g.\ \cite[Proposition 4.8]{IY})
\begin{equation}\label{describe heart}
H^{\ell}:\DDD^{\ell}(C^{\mathrm{e}})\to\mod B^{\mathrm{e}}.
\end{equation}

\begin{proof}[Proof of Lemma \ref{N and B}]
Clearly $F(C)\simeq B$ holds. 

We will show that $F(N)\simeq{}_\tau B[m]$. 
By (R2), there are isomorphisms 
$s:P^{\otimes\ell}\stackrel{\sim}{\to} A[m]$ 
and $s\otimes1_P:P^{\otimes\ell+1}\stackrel{\sim}{\to} A[m]\otimes_AP$ in $\DDD(A^{\mathrm{e}})$, and it follows that $N\in\DDD^{-m}(C^{\mathrm{e}})$.
Thanks to the equivalence \eqref{describe heart}, it suffices to show that $H^{-m}(N)$ is isomorphic to ${}_\tau B$ as $B^{\mathrm{e}}$-module. 

Consider the morphism
\[u:=s\oplus(s\otimes 1_P):N=P^{\otimes\ell}\oplus P^{\otimes\ell+1} \to A[m]\oplus(A[m]\otimes_AP) = A[m]\otimes_AC\ \mbox{ in }\ \CCC(A^{\mathrm{e}})\]
and the induced morphism
\[H^{-m}(u):H^{-m}(N)\to H^{-m}(C[m])=B\ \mbox{ in }\ \mod A^{\mathrm{e}}.\]
Since $N$ is a dg $C^{\mathrm{e}}$-module, $H^{-m}(N)$ has a natural $B^{\mathrm{e}}$-module structure. To prove our claim, it suffices to show the following.
\begin{enumerate}
\item $H^{-m}(u)$ is a morphism of right $B$-modules.
\item If we twist the left action of $B$ on $B$ by $\tau$, then $H^{-m}(u)$ is a morphism of left $B$-modules.
\end{enumerate}
The claim (1) is clear. In fact, since $u=s\otimes1_C:N=P^{\otimes\ell}\otimes_C C\to A[m]\otimes_C C\simeq C[m]$ is a morphism of right dg $C$-modules, $H^{-m}(u)$ is a morphism of right $B$-modules.

It remains to show (2). It suffices to check that $H^{-m}(u)$ commutes with the left action of $DA\subset B$ after twisting by $\tau$, that is, for each $p\in Z^0(P)$, the diagram
\begin{equation}\label{show commutative}
\xymatrix{H^{-m}(P^{\otimes\ell})\ar[rr]^{H^{-m}(s)}\ar[d]_{H^{-m}(p\otimes-)}&& H^{-m}(A[m]) \ar@{=}[rrr] &&&A\ar[d]^{zH^0(p\cdot)}\\
H^{-m}(P^{\otimes\ell+1})\ar[rr]^(.45){H^{-m}(s\otimes1_P)}&&H^{-m}(A[m]\otimes_AP)\ar[rr]^(.55){H^{-m}(l_{P,m})}&&H^{-m}(P[m])\ar[r]^(.6)\sim&DA}
\end{equation}
commutes, where the map $H^0(p\cdot)$ is induced from the left multiplication $(p\cdot):A\to P$.

By \eqref{define tuz}, we have a commutative diagram in $\DDD(A^{\mathrm{e}})$:
\[\xymatrix@R0.5em@C3em{
&P\otimes_A(A[m])\ar[r]^(.6){r_{P,m}}&P[m]\ar[r]^\sim&DA[m]\ar[dd]^{z1_{DA[m]}}\\
P^{\otimes\ell+1}\ar[ur]^(.45){1_P\otimes s}\ar[dr]_(.45){s\otimes1_P}\\
&A[m]\otimes_AP\ar[r]^(.6){l_{P,m}}&P[m]\ar[r]^\sim&DA[m].
}\]
Applying $H^{-m}$ yields the following commutative diagram in $\mod A^{\mathrm{e}}$:
\begin{equation}\label{know commutative}
\xymatrix@R.5em@C4em{
&H^{-m}(P\otimes_A(A[m]))\ar[r]^(.55){H^{-m}(r_{P,m})}&H^{-m}(P[m])\ar[r]^(.65)\sim&DA\ar[dd]^{z1_{DA}}\\
H^{-m}(P^{\otimes\ell+1})\ar[ur]^(.47){H^{-m}(1_P\otimes s)}\ar[dr]_(.47){H^{-m}(s\otimes1_P)}\\
&H^{-m}(A[m]\otimes_AP)\ar[r]^(.55){H^{-m}(l_{P,m})}&H^{-m}(P[m])\ar[r]^(.65)\sim&DA.
}\end{equation}
Now fix $p\in Z^0(P)$ and $x\in Z^{-m}(P^{\otimes\ell})$.
The image of $x$ through the four lower maps in \eqref{show commutative} clearly equals the image of $p\otimes x$ under the lower composition in \eqref{know commutative}.

For any $i\in \Z$, $Y\in\DDD(A^{\mathrm{e}})$ and $y\in Z^{-i}(Y)$, denote by $[y]\in H^{-i}(Y)$ the cohomology class of $y$. Then the image of $x$ under the upper composition in \eqref{show commutative} is $z[p][s(x)]=z[ps(x)]$. Since $|p|=0$, the image of $p\otimes x$ through the composition in the upper four maps of \eqref{know commutative} is also $z[ps(x)]$.
Thus, the desired commutativity of \eqref{show commutative} follows from that of \eqref{know commutative}.
\end{proof}

Now we are ready to prove Proposition \ref{trivial extension is periodic: first part}.

\begin{proof}[Proof of Proposition \ref{trivial extension is periodic: first part}]
In $\DDD_{\sg}(B^{\mathrm{e}})$, we have an isomorphisms
\[B\stackrel{\ref{N and B}}{\simeq}F(C)\stackrel{\ref{C and N}}{\simeq}F({}_\sigma N[\ell])\stackrel{\ref{N and B}}{\simeq}{}_{\sigma\tau}B[\ell+m]={}_{\varphi}B[\ell+m].\]
Thus $\Omega^{\ell+m}_{B^{\mathrm{e}}}(B)\simeq {}_{\varphi}B$ holds in $\stmod B^{\mathrm{e}}$. Then the assertion follows.
\end{proof}

Last, we prove Theorem~\ref{trivial extension is periodic}.

\begin{proof}[Proof of Theorem \ref{trivial extension is periodic}]
Since Lemma \ref{lem:conditions OK} tells us that all four conditions (R1) to (R4) are satisfied for the projective resolution $P$ of $DA$, the assertions (a) and (c) follow immediately from Proposition \ref{trivial extension is periodic: first part} and Theorem \ref{trivial extension is periodic: second part}. 

It now remains to prove (b).  By (a), $B$ is $(\ell+m)$-periodic if $(-1)^{\ell+m}=1$ in $k$, and $2(\ell+m)$-periodic otherwise. Conversely, assume that $\Omega_{B^{\mathrm{e}}}^n(B)\simeq B$ in $\mod B^{\mathrm{e}}$ for $n\ge1$. Then there exists $a\ge1$ such that $\Omega^n_{B^{\mathrm{e}}}(B)\simeq B(-a)$ in $\mod^{\Z}B^{\mathrm{e}}$, and this gives an isomorphism $\Omega^n\simeq(-a)$ of functors on $\stmod^{\Z}B$.
By Proposition \ref{periodic to fCY}, $A$ is $\frac{n-a}{a}$-Calabi--Yau. Since $\CYdim A=(m,\ell)$, there exists a positive integer $i$ such that $n-a=mi$ and $a=\ell i$. Thus $n=(\ell+m)i$ holds, and we have
\[B\simeq\Omega_{B^{\mathrm{e}}}^{n}(B)\simeq{}_ {\varphi^i}B_1\ \mbox{ in }\ \mod B^{\mathrm{e}}\]
by (a). Thus $\varphi^i=1$ in $\Out_k(B)$, and $(-1)^{(\ell+m)i}=1$ in $k$ by Lemma \ref{inneriffidentity}. Thus $n$ is a multiple of $2(\ell+m)$ if $(-1)^{\ell+m}\neq1$ in $k$.
\end{proof}


\section{Application to self-injective algebras}\label{section:selfinjective}

In this section, we show how our results about periodicity and twisted periodicity can be extended from trivial extensions to more general classes of orbit algebras.

Let $\CC$ be an additive category, and $G$ a group of automorphisms of $\CC$.
The \emph{orbit category} $\CC/G$ has the same objects as $\CC$, and morphism sets
\[\Hom_{\CC/G}(X,Y):=\bigoplus_{g\in G}\Hom_{\CC}(X,gY) \,.\]
The composition of $(a_g)_{g\in G}\in\Hom_{\CC/G}(X,Y)$ and $(b_g)_{g\in G}\in\Hom_{\CC}(Y,Z)$ is given by
\[\left(\sum_{h\in G}(X\xrightarrow{a_h}hY\xrightarrow{h(b_{h^{-1}g})}gZ)\right)_{g\in G}\in\Hom_{\CC/G}(X,Z).\]
Now assume that $\CC$ is Krull--Schmidt, and let $\ind\CC$ be a set of representatives for the isomorphism classes of indecomposable objects in $\CC$. Moreover, assume that $\CC$ is $k$-linear and \emph{locally bounded}, that is, 
\[\sum_{Y\in\ind\CC}\dim_k\Hom_{\CC}(X,Y)<\infty\mbox{ and }\sum_{Y\in\ind\CC}\dim_k\Hom_{\CC}(Y,X)<\infty\]
for all $X\in\ind\CC$.
We call $G$ \emph{admissible} if it acts freely on $\ind\CC$ and $\#(\ind\CC/G)<\infty$. In this case, we can regard $\CC/G$ as a finite-dimensional $k$-algebra.
For example, the repetitive category $\widehat{A}$ of a finite-dimensional $k$-algebra $A$ is locally bounded, and a cyclic group $\langle\phi\rangle\subset\Aut_k(\widehat{A})$ is admissible whenever it acts freely on $\ind\widehat{A}$, by \cite[Lemma~3.4]{DI}.

Let $G$ be a group of automorphisms of $A$.
The induced $G$-action $g\cdot M = g^*(M)$ on $\mod A$ restricts to $\proj A$. The \emph{orbit algebra} $A/G$ is defined as $A/G = \End_{(\proj A)/G}(A)$.
Note that $A/G$ is isomorphic to the skew group algebra $A*G$ (cf.\ \cite[Proposition~2.4]{CM}). We say that $G$ is admissible if its action on $\proj A$ is admissible.

The following result, proved by Dugas under the assumption that $A$ is a split $k$-algebra, holds also in our, somewhat more general, setting.

\begin{proposition}{\cite[Corollary 3.8]{Du2}}\label{periodicity and covering}
Let $\Lambda$ be a basic finite-dimensional algebra over a field $k$ such that $\Lambda/\rad \Lambda$ is a separable $k$-algebra, and $G$ an admissible group of automorphisms of $\Lambda$.
Then $\Lambda$ is periodic if and only if $\Lambda/G$ is periodic.
\end{proposition}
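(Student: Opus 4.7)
The strategy is to adapt Dugas's argument for the split case, replacing the Gordon--Green--Solberg type result with the separability-version (Proposition~\ref{thm:hani}).

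First, I would verify that $\Lambda/G$ inherits the two hypotheses: basic admissibility of $G$ on $\proj \Lambda$ (together with $\Lambda/G \simeq \Lambda * G$) ensures that $\Lambda/G$ is a basic finite-dimensional $k$-algebra, and the standard identification $(\Lambda * G)/\rad(\Lambda * G) \simeq (\Lambda/\rad \Lambda) * G$ shows that $(\Lambda/G)/\rad(\Lambda/G)$ is a separable $k$-algebra whenever $\Lambda/\rad \Lambda$ is. Consequently, Proposition~\ref{thm:hani} is available for both $\Lambda$ and $\Lambda/G$.

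Second, I would exploit the covering functors associated with the Galois covering $\Lambda \to \Lambda/G$. The push-down $F_\lambda \colon \mod \Lambda \to \mod(\Lambda/G)$ and pull-up $F_\bullet \colon \mod(\Lambda/G) \to \mod \Lambda$ are both exact and send projectives to projectives, hence descend to functors between the stable module categories which commute with $\Omega$. They set up a correspondence under which $G$-orbits of simple $\Lambda$-modules match the isomorphism classes of simple $(\Lambda/G)$-modules. It follows formally that all simple $\Lambda$-modules are $\Omega$-periodic with a uniform period if and only if the same holds for all simple $(\Lambda/G)$-modules. By Proposition~\ref{thm:hani}, this common property is in turn equivalent to $\Lambda$, respectively $\Lambda/G$, being \emph{twisted} periodic.

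Third, one has to promote twisted periodicity to (untwisted) periodicity in the basic separable setting. The twist produced by Proposition~\ref{thm:hani} acts on the finite set of iso-classes of primitive idempotents and is determined modulo inner automorphisms by this action (using that any two complete sets of orthogonal primitive idempotents are conjugate, as recalled after Proposition~\ref{Out Pic}); a sufficiently high power of the twist therefore becomes inner, so that raising the period by that multiple yields untwisted periodicity of $\Lambda$ (respectively of $\Lambda/G$). Combining with step two then gives the desired equivalence. The main obstacle I anticipate is this last passage: one must argue that the twist coming out of Proposition~\ref{thm:hani} has finite order in $\Out_k$ in the non-split but separable situation, and track how this finite order behaves under pushing down through the covering. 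I expect this to go through because the covering functors are compatible both with $\Omega$ and with the combinatorics of primitive idempotents, but the verification needs the separability hypothesis at the simple-module level (e.g.\ via Proposition~\ref{forgetful} and the uniqueness of CSOPI up to conjugation).
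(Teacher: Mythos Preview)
Your argument has a genuine gap in the third step. You claim that the twist $\psi$ coming out of Proposition~\ref{thm:hani} is ``determined modulo inner automorphisms by its action on the finite set of iso-classes of primitive idempotents'', and hence has finite order in $\Out_k$. This is false: an automorphism of a basic algebra that fixes every primitive idempotent need not be inner (think of a quiver with parallel arrows, or any algebra with a nontrivial automorphism fixing the vertices). What you are attempting in step three is precisely to prove that twisted periodicity implies periodicity for basic algebras with separable top --- but this is the Periodicity Conjecture (Question~\ref{conj:periodicity}), which the paper treats as open throughout. So your route through simple periodicity can only deliver the \emph{twisted} equivalence, and you cannot close the loop without assuming the very conjecture the paper is trying to make progress on.

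The paper does not supply its own proof here; it simply points to Dugas's argument in \cite[Corollary~3.8]{Du2} and observes that it goes through with ``separable'' in place of ``split''. Dugas's proof does \emph{not} pass through simple periodicity and Proposition~\ref{thm:hani}. Instead, it works directly at the bimodule level: one sets up a Galois covering of enveloping algebras (roughly, $\Lambda^e$ covers $(\Lambda/G)^e$ via a $G\times G$-action, with a further quotient by the diagonal), and uses push-down/pull-up functors on bimodules to transport the minimal projective $\Lambda^e$-resolution of $\Lambda$ to the minimal projective $(\Lambda/G)^e$-resolution of $\Lambda/G$, and vice versa. Periodicity of the regular bimodule is then transferred directly, with no twist ever entering the picture. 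The only place splitness was used in \cite{Du2} is in identifying the top of the algebra, and that is exactly where separability suffices. If you want to write out a proof, you should follow that bimodule-covering line rather than detouring through one-sided simples.
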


The following results complete the proof of our main Theorems~\ref{main theorem} and \ref{main theorem 2}.

\begin{proposition}\label{application to orbit algebra}
Assume that $A/\rad A$ is a separable $k$-algebra, and $G$ an admissible group of automorphisms of $\widehat{A}$, and $B:=\widehat{A}/G$.
\begin{enumerate}[\rm(a)]
\item The following conditions are equivalent.
\begin{enumerate}[\rm(i)]
\item Each $T(A)$-module has complexity at most one.
\item $T(A)$ is twisted periodic.
\item Each $B$-module has complexity at most one.
\item $B$ is twisted periodic.
\end{enumerate}
\item Assume that $G$ contains $\nu_{\widehat{A}}^\ell$ for some $\ell\ge1$. Then $T(A)$ is periodic if and only if $B$ is periodic.
\end{enumerate}
\end{proposition}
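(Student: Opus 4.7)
My plan is to prove part (b) by two applications of Proposition~\ref{periodicity and covering}, and part (a) by transferring the equivalent conditions for $T(A)$ given by Theorem~\ref{main theorem 2 again} to $B$ via covering-theoretic arguments.

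For part (b), the hypothesis $G\supseteq \langle\nu_{\widehat{A}}^\ell\rangle$ provides the $\ell$-fold trivial extension $T_\ell(A) = \widehat{A}/\langle\nu_{\widehat{A}}^\ell\rangle$ as a common intermediate covering, giving Galois coverings
\[
T_\ell(A) \twoheadrightarrow T(A) \ \text{(with group }\langle\nu_{\widehat{A}}\rangle/\langle\nu_{\widehat{A}}^\ell\rangle \cong \Z/\ell\Z\text{)}, \quad T_\ell(A) \twoheadrightarrow B \ \text{(with group }G/\langle\nu_{\widehat{A}}^\ell\rangle\text{)}.
\]
Admissibility of $G$, combined with the fact that $\langle\nu_{\widehat{A}}^\ell\rangle$ already has finitely many orbits on $\ind\widehat{A}$, forces $G/\langle\nu_{\widehat{A}}^\ell\rangle$ to be a finite group acting admissibly on $\ind T_\ell(A)$, and similarly for $\Z/\ell\Z$. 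After reducing to basic algebras (valid since periodicity is Morita-invariant, Proposition~\ref{elementary properties}(c)), Proposition~\ref{periodicity and covering} applies to each covering and yields
\[
T(A) \text{ periodic} \iff T_\ell(A) \text{ periodic} \iff B \text{ periodic}.
\]

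For part (a), the equivalence (i)$\Leftrightarrow$(ii) is Theorem~\ref{main theorem 2 again}, and (iv)$\Rightarrow$(iii) is Remark~\ref{remark on cx}. To close the cycle I would establish (ii)$\Rightarrow$(iv) and (iii)$\Rightarrow$(i). For the first, Theorem~\ref{main theorem 2 again} upgrades (ii) to ``$A$ has finite global dimension and is twisted fractionally Calabi--Yau'', and Proposition~\ref{tw-fCY to tw-pe} then supplies a uniform graded $\Omega$-periodicity $\Omega^n(S)\simeq S(a)$ for all simple objects $S$ in $\mod^\Z T(A) \simeq \mod\widehat{A}$; pushing down along the Galois covering $\widehat{A}\to B$ gives $\Omega$-periodicity of each simple $B$-module, since admissibility of $G$ allows the residual shift by $(a)$ to be absorbed by the $G$-twist after taking an appropriate multiple of $n$, whence $B$ is twisted periodic by Proposition~\ref{thm:hani}. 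For the second, complexity $\leq 1$ of $B$-modules transfers to complexity $\leq 1$ of $\widehat{A}$-modules (via pullback along the Galois covering), and hence to complexity $\leq 1$ of $T(A)$-modules (by the forgetful functor $\mod^\Z T(A)\to\mod T(A)$, using Proposition~\ref{forgetful}).

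The main obstacle is the transfer in part (a) between $T(A)$ and $B$, since $G$ and $\langle\nu_{\widehat{A}}\rangle$ are a priori unrelated subgroups of $\Aut_k(\widehat{A})$. The bridge is the uniformity of the shift provided by Proposition~\ref{tw-fCY to tw-pe}: the same $(a)$ works for all simples simultaneously, so admissibility of $G$ (finiteness of orbit set on $\ind\widehat{A}$) forces some power of the shift-by-$(a)$ automorphism to lie in $G$ up to an element that is absorbed by increasing the period. A secondary, technical obstacle in part (b) is verifying normality of $\langle\nu_{\widehat{A}}^\ell\rangle$ in $G$ so that $G/\langle\nu_{\widehat{A}}^\ell\rangle$ is a well-defined group acting on $T_\ell(A)$; this follows from the centrality of the Nakayama automorphism in the outer automorphism group of $\widehat{A}$.
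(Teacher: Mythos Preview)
Your approach to part (b) is essentially the paper's: reduce to basic algebras, use $T_\ell(A)=\widehat{A}/\langle\nu_{\widehat{A}}^\ell\rangle$ as the common intermediate, and apply Proposition~\ref{periodicity and covering} twice.

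For part (a) the paper takes a more direct and symmetric route than yours. Rather than passing through the twisted fractional Calabi--Yau property of $A$, the paper pivots through $\widehat{A}$ itself: the push-down functors $\mod\widehat{A}\to\mod T(A)$ and $\mod\widehat{A}\to\mod B$ both preserve simple modules and minimal projective resolutions, so the complexity of each simple $\widehat{A}$-module equals that of its image in $\mod T(A)$ and in $\mod B$. This immediately gives (i)$\Leftrightarrow$(iii), and an analogous transfer of $\Omega$-periodicity of simples gives (ii)$\Leftrightarrow$(iv) via Proposition~\ref{thm:hani}. Your route (ii)$\Rightarrow$(iv), which first invokes Theorem~\ref{main theorem 2 again} to get $A$ twisted fractionally Calabi--Yau and then Proposition~\ref{tw-fCY to tw-pe}, is correct but circuitous; the detour through $\Db(\mod A)$ is not needed. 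Your (iii)$\Rightarrow$(i) is right in substance, though the transfer is via push-down rather than ``pullback'': one compares complexities of simple $\widehat{A}$-modules with those of their push-downs, and then uses the Horseshoe lemma to pass from simples to all modules.

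One remark on the ``absorbing the shift'' step, which you correctly flag as the crux. What makes this work is a pigeonhole argument: if $\Omega^n_{\widehat{A}}(S)\simeq\nu^a S$ for a simple $\widehat{A}$-module $S$, then the iterates $\Omega^{jn}_{\widehat{A}}(S)$ are infinitely many simple $\widehat{A}$-modules, while admissibility of $G$ gives only finitely many $G$-orbits of simples; hence two iterates lie in the same $G$-orbit, and cancelling $\Omega$ (an autoequivalence of $\stmod B$) yields $\Omega$-periodicity of $F_B(S)$. Your phrasing ``some power of the shift-by-$(a)$ automorphism to lie in $G$'' is not quite right---$\nu^{ja}$ need not lie in $G$---but the pigeonhole version is.
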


\begin{proof}
(a)
The equivalence (i)$\Leftrightarrow$(ii) was shown in Theorem~\ref{main theorem 2 again}.  

For (i)$\Leftrightarrow$(iii), recall that the push-down functors $\mod\widehat{A}\to\mod T(A)$ and $\mod\widehat{A}\to\mod B$ preserve simple modules and minimal projective resolutions (c.f. \cite[3.5]{DI}). Thus, both (i) and (iii) are equivalent to having $\cx_{\widehat{A}}(S)\leq 1$ for all simple $\widehat{A}$-modules $S$.

Similarly, periodicity of simples for $\widehat{A}$ is equivalent to periodicity of simples for $T(A)$, as well as for $B$. By Proposition~\ref{thm:hani}, these conditions are equivalent to twisted periodicity of $T(A)$ and $B$, respectively, which proves (ii)$\Leftrightarrow$(iv).

(b)
First, note that $T(A\times A')\cong T(A)\times T(A')$. Therefore, by Proposition~\ref{elementary properties}(a), we may assume, without loss of generality, that $A$ is ring-indecomposable. We may also assume that $A$ is basic, by Proposition~\ref{elementary properties}(c).
Recall that $\widehat{A}/\langle\widehat{\nu}^\ell\rangle=T_\ell(A)$, where $\widehat{\nu}=\nu_{\widehat{A}}$.
Now, both $G/\langle\widehat{\nu}^\ell\rangle$ and $\langle\widehat{\nu}\rangle/\langle\widehat{\nu}^\ell\rangle\cong\Z/\ell\Z$ are admissible groups of automorphisms of $T_\ell(A)$, yielding the orbit algebras
\[T_\ell(A)/(G/\langle\widehat{\nu}^\ell\rangle)=B\ \mbox{ and }\ T_\ell(A)/(\Z/\ell\Z)=T(A) \,,\]
respectively.
Using Proposition~\ref{periodicity and covering} twice, it follows that $T_\ell(A)$ is periodic if and only if $T(A)$ is periodic if and only if $B$ is periodic.
\end{proof}

We can now summarize our results about periodicity and twisted periodicity of orbit algebras as follows.

\begin{corollary}\label{application to orbit algebra 2}
Let $A$ be a finite-dimensional algebra over a field $k$ such that $A/\rad A$ is a separable $k$-algebra, and $G$ an admissible group of automorphisms of $\widehat{A}$.
\begin{enumerate}[\rm(a)]
\item The following conditions are equivalent.
\begin{enumerate}[\rm(i)] 
\item $\widehat{A}/G$ is twisted periodic.
\item Each $\widehat{A}/G$-module has complexity at most one.
\item $A$ has finite global dimension and is twisted fractionally Calabi--Yau.
\end{enumerate}
\item If $G$ contains $\nu_{\widehat{A}}^\ell$ for some $\ell\ge1$, then the following conditions are equivalent.
\begin{enumerate}[\rm(i)] 
\item $\widehat{A}/G$ is periodic.
\item $A$ has finite global dimension and is fractionally Calabi--Yau.
\end{enumerate}
\item If the outer automorphism group of $A$ is finite, and $G$ contains $\nu_{\widehat{A}}^\ell$ for some $\ell\ge1$, then $\widehat{A}/G$ is periodic if and only if it is twisted periodic.
\end{enumerate}
\end{corollary}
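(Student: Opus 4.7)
The plan is to assemble the statement from already-established results. For part (a), the equivalence (i)$\Leftrightarrow$(ii) is immediate from Proposition \ref{application to orbit algebra}(a) (conditions (iii) and (iv) there). For (i)$\Leftrightarrow$(iii), I would chain Proposition \ref{application to orbit algebra}(a), which transfers twisted periodicity between $\widehat{A}/G$ and $T(A)$, with the equivalence (i)$\Leftrightarrow$(v) of Theorem \ref{main theorem 2 again}, which equates twisted periodicity of $T(A)$ with $A$ having finite global dimension and being twisted fractionally Calabi--Yau.

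For part (b), the plan is analogous: Proposition \ref{application to orbit algebra}(b), whose hypothesis $\nu_{\widehat{A}}^\ell\in G$ is precisely what is assumed here, transfers periodicity between $\widehat{A}/G$ and $T(A)$, and then Corollary \ref{finite out}\eqref{finite out1} identifies periodicity of $T(A)$ with $A$ being of finite global dimension and fractionally Calabi--Yau.

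Part (c) is the only statement that requires a small additional argument beyond direct citation. Assuming $\Out_k(A)$ is finite, I would observe that a twisted fractionally Calabi--Yau algebra $A$ with associated twist $\phi\in\Aut_k(A)$ is automatically fractionally Calabi--Yau: if $\nu^\ell\simeq[m]\circ\phi^*$ and $\phi$ has finite order $n$ in $\Out_k(A)$, then by Proposition \ref{Out Pic}, $(\phi^*)^n\simeq\mathrm{id}$, whence $\nu^{n\ell}\simeq[nm]$. Combining this observation with parts (a) and (b) yields the chain
\[\widehat{A}/G\text{ twisted periodic}\ \stackrel{\text{(a)}}{\Longleftrightarrow}\ A\text{ twisted fractionally CY with }\gldim A<\infty\]
\[\Longleftrightarrow\ A\text{ fractionally CY with }\gldim A<\infty\ \stackrel{\text{(b)}}{\Longleftrightarrow}\ \widehat{A}/G\text{ periodic},\]
where the middle equivalence uses the finiteness of $\Out_k(A)$ together with the trivial reverse implication.

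There is no genuine obstacle here; the work is entirely in the preceding sections. The only conceptual point to highlight is the passage from twisted to untwisted in part (c), which rests on the order argument above, and ultimately traces back to the identification $\Out_k(A)\simeq\Pic_k(A)$ recorded in Proposition \ref{Out Pic}.
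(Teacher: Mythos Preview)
Your proposal is correct and follows essentially the same approach as the paper. The only cosmetic difference is in part (c): the paper cites Corollary~\ref{finite out}\eqref{finite out2} directly, whereas you unpack its content (the finite-order argument in $\Out_k(A)$ via Proposition~\ref{Out Pic}) explicitly; this is the same argument either way.
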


\begin{proof}
The statement (a) follows from Theorem~\ref{main theorem 2 again} and Proposition~\ref{application to orbit algebra}(a), and (b) follows from Corollary~\ref{finite out} and Proposition~\ref{application to orbit algebra}(b). Statement (c) is immediate from (a) and (b) together with Corollary~\ref{finite out}(b).
\end{proof}


\section{Examples}\label{sec:eg}

In this section, we give examples of (twisted) fractionally Calabi--Yau algebras and (twisted) periodic trivial extension algebras.
The simplest examples are given by symmetric and self-injective algebras, which are $\frac{0}{1}$-Calabi--Yau and twisted $\frac{0}{1}$-Calabi--Yau, respectively.

\subsection{Examples from representation-finite and $d$-representation-finite algebras}

The following count amongst the most fundamental examples of fractionally Calabi--Yau algebras.

\begin{example}\label{ex:dynkin}
Let $kQ$ be the path algebra of a Dynkin quiver, and $h$ the Coxeter number, given as follows:
  \[\begin{array}{|c|c|c|c|c|}
\hline
A_n&D_n&E_6&E_7&E_8\\ \hline
n+1&2(n-1)&12&18&30\\ \hline
\end{array}\]
It is well known, for example from  \cite{MY}, that such an algebra is fractionally Calabi--Yau.  It seems to be folklore, c.f.\ \cite[3.1]{HI}, that $\CYdim kQ=(\frac{h}{2}-1,\frac{h}{2})$ when $Q$ is of type $A_1$, $D_n$ with even $n$ or $E_7, E_8$, and $\CYdim kQ=(h-2,h)$ else.
For completeness, we shall give a proof of this in Section \ref{sec:DynkinCY} (see Theorem~\ref{Dynkin is CY}).
On the other hand, $kQ$ is not fractionally Calabi--Yau when $Q$ is not of Dynkin type, since the Coxeter matrix of $kQ$ is not periodic in this case, see for example \cite[Proposition 3.1]{L2}.
\end{example}

From Example~\ref{ex:dynkin} we get an alternative proof of the following result, which was first obtained in \cite{BBK} by a case-by-case calculation. In contrast, our proof is purely conceptual -- albeit highly technical due to the dg technology involved.

\begin{example}\label{T(kQ) is periodic} 
Let $kQ$ be the path algebra of a Dynkin quiver, and $h$ the Coxeter number of the corresponding Dynkin type. Then the minimal period of $T(kQ)$ is
\[
\begin{cases}
h-1 & \text{if ${\rm char} k=2$, and $Q$ is one of type $A_1$, $D_{2n}$ or $E_7, E_8$;}\\
2h-2 & \text{otherwise.}
\end{cases}
\]
\end{example}
\begin{proof}
By Theorem~\ref{trivial extension is periodic}, the Calabi--Yau dimensions of Dynkin quivers given in Example~\ref{ex:dynkin} directly translates into the minimal periods as claimed.
\end{proof}

Example \ref{ex:dynkin} above admits a generalisation (albeit imperfect) from the point of view of cluster-tilting theory.

Recall that the \emph{Coxeter matrix} $c_A$ of an algebra $A$ of finite global dimension is defined as $c_A:=-U^{-1} U^T$, where $U$ is the Cartan matrix of $A$. When $A$ is $\frac{m}{\ell}$-Calabi--Yau, then $c_A$ is periodic with $c_A^{2 \ell}$ being the identity matrix, see \cite[Lemma 2.9]{Pe}.
Now we give more examples. 

\begin{example}\cite[Theorem 1.1]{HI}\label{eg:fCYdimd}
Let $d\ge1$. Any $d$-representation-finite (Definition~\ref{def:CT}) algebra $A$ with $\gldim A\le d$ is twisted fractionally Calabi--Yau.  More precisely, let $a$ be the number of indecomposable direct summands of a basic $d$-cluster-tilting $A$-module, and $b$ be the number of simple $A$-modules.  Then $A$ is twisted $\frac{m}{\ell}$-Calabi--Yau with $\frac{m}{\ell}=\frac{d(a-b)}{a}$ as a rational number. The case $d=1$ was given in Example \ref{ex:dynkin} with the stronger untwisted property; c.f.{} Question~\ref{remove twist 2}.
\end{example}

We give one other class of fractionally Calabi--Yau algebras arising from higher Auslander--Reiten theory.  For this purpose, we need an abstract result relating the fractional Calabi--Yau property with cluster-tilting subcategories.
We refer to \cite{IO} for any unexplained terminology, as these notions are used only in the following Proposition~\ref{mod CY is CY}.
We recall here that a $k$-linear Hom-finite triangulated category $\TT$ with suspension functor ${}_\TT\Sigma$ and a Serre functor ${}_\TT\Sbb$ is \emph{$\frac{m}{\ell}$-Calabi--Yau} if ${}_\TT\Sbb^\ell \simeq {}_\TT\Sigma^m$ as additive functors, and remark that, with this definition, an Iwanaga--Gorenstein algebra $A$ is $\frac{m}{\ell}$-Calabi--Yau if and only if so is $\per(A)$.

A \emph{$d\Z$-cluster-tilting} subcategory $\UU$ of a triangulated category $\TT$ is a $d$-cluster-tilting subcategory that in addition satisfies ${}_\TT\Sigma^d(\UU)=\UU$ or, equivalently, $\Hom_{\TT}(\UU,{}_\TT\Sigma^i(\UU))=0$ for each $i\in\Z\setminus d\Z$
\cite{JK,Kv}.

For an autoequivalence $\phi$ of $\TT$, we have an induced automorphism $\phi_*$ on the category $\mod \TT$ of finitely presented functors given by precompostion by a chosen quasi-inverse $\phi^{-1}$.
With a slight abuse of notation, we denote by ${}_\TT\Sbb_{*}$ and ${}_\TT\Sigma_{*}$ the automorphisms of the stable category $\stmod\UU$ induced by ${}_\TT\Sbb$ and ${}_\TT\Sigma$ respectively.

\begin{proposition}\label{mod CY is CY}
  Let $\TT$ be a triangulated category, $\UU$ a $d\Z$-cluster-tilting subcategory of $\TT$, and $\ell\ge1$ and $m$ integers.
\begin{enumerate}[\rm(a)]
\item The category $\stmod\UU$ is triangulated, and satisfies
\[{}_{\stmod\UU}\Sbb={}_\TT\Sbb_{*}\circ {}_{\stmod\UU}\Sigma^{-1}\ \mbox{ and }\ {}_{\stmod\UU}\Sigma^{d+2}={}_\TT\Sigma_{*}^{d}.\]
\item If $\alpha:={}_\TT\Sbb^\ell \circ{}_\TT\Sigma^{-m}$ satisfies $\alpha(\UU)=\UU$, then 
\[{}_{\stmod\UU}\Sbb^{d \ell}={}_{\stmod\UU}\Sigma^{(d+2)m-d \ell}\circ\alpha_*^{d}.\]
\item If $\TT$ is $\frac{m}{\ell}$-Calabi--Yau, then $\stmod\UU$ is a $\frac{(d+2)m-d\ell}{d\ell}$-Calabi--Yau triangulated category.
\end{enumerate}
\end{proposition}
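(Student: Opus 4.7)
The plan is to derive (c) from (b) by setting $\alpha=\mathrm{id}$ (so that $\alpha_*=\mathrm{id}$), yielding ${}_{\stmod\UU}\Sbb^\ell = {}_{\stmod\UU}\Sigma^{(d+2)m-\ell}$ directly. For (b), I would use (a) to give a direct formal computation: provided $\Sbb_\TT$ preserves $\UU$ (to be shown as part of (a)), the functor ${}_\TT\Sbb_*$ commutes with the cosyzygy construction defining ${}_{\stmod\UU}\Sigma$, so
\[{}_{\stmod\UU}\Sbb^\ell = \bigl({}_\TT\Sbb_* \circ {}_{\stmod\UU}\Sigma^{-1}\bigr)^\ell = {}_\TT\Sbb_*^\ell \circ {}_{\stmod\UU}\Sigma^{-\ell}.\]
On the other hand, using ${}_\TT\Sigma_* = {}_{\stmod\UU}\Sigma^{d+2}$ from (a),
\[{}_{\stmod\UU}\Sigma^{(d+2)m-\ell}\circ\alpha_* = {}_{\stmod\UU}\Sigma^{(d+2)m-\ell} \circ {}_\TT\Sbb_*^\ell \circ {}_{\stmod\UU}\Sigma^{-(d+2)m} = {}_\TT\Sbb_*^\ell \circ {}_{\stmod\UU}\Sigma^{-\ell},\]
establishing (b).

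The technical heart is thus (a). Here I would invoke the theory of higher Auslander--Reiten (Geiss--Keller--Oppermann, Jasso--Kvamme, Iyama--Oppermann): the $d\Z$-cluster-tilting subcategory $\UU$ carries a canonical $(d+2)$-angulated structure with degree-$d$ shift ${}_\TT\Sigma^d|_\UU$, and $\mod\UU$ is a Frobenius exact category whose projective--injective objects are precisely the Yoneda representables $\UU(-,U)$; hence $\stmod\UU$ is triangulated, with suspension ${}_{\stmod\UU}\Sigma$ computed by cosyzygies. To prove ${}_{\stmod\UU}\Sigma^{d+2} = {}_\TT\Sigma_*$, I would observe that any presentation $\UU(-,U_1) \to \UU(-,U_0) \to F \to 0$ corresponds via Yoneda to a morphism $U_1 \to U_0$ in $\UU$, which completes to a $(d+2)$-angle $U_1 \to U_0 \to V^1 \to \cdots \to V^{d-1} \to {}_\TT\Sigma^d(U_1)$ in $\UU$; this produces a long exact complex in $\mod\UU$ from which the $(d+2)$-nd cosyzygy of $F$ is identified with the functor obtained by precomposing $F$ with ${}_\TT\Sigma^{-1}$.

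For the Serre functor formula ${}_{\stmod\UU}\Sbb = {}_\TT\Sbb_* \circ {}_{\stmod\UU}\Sigma^{-1}$, I would first show $\Sbb_\TT(\UU)=\UU$: for $Y \in \UU$ and $i \in \Z \setminus d\Z$, Serre duality in $\TT$ gives $\Hom_\TT(\UU, {}_\TT\Sigma^i \Sbb_\TT Y) \cong D\Hom_\TT(Y, {}_\TT\Sigma^{-i}\UU) = 0$ by the $d\Z$-cluster-tilting orthogonality, and hence $\Sbb_\TT Y \in \UU$ by the maximality property characterising $d$-cluster-tilting subcategories; a symmetric argument applies to $\Sbb_\TT^{-1}$. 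Then I would establish the formula via a higher Auslander--Reiten-type duality: taking the Serre dual of a projective presentation of $F\in\mod\UU$ and using that ${}_\TT\Sbb_*$ sends representables to representables (by $\Sbb_\TT$-stability of $\UU$) yields a natural isomorphism $\Hom_{\stmod\UU}(F,G) \cong D\Hom_{\stmod\UU}(G, {}_\TT\Sbb_* \circ {}_{\stmod\UU}\Sigma^{-1}(F))$. The main obstacle is precisely this identification of the Serre functor on $\stmod\UU$, which requires first verifying its existence (needing Hom-finiteness and idempotent-completeness of $\TT$) and then calibrating the higher Auslander--Reiten formula against the $(d+2)$-angulated structure on $\UU$; once this is in place, the remainder of the proposition reduces to the formal bookkeeping described above.
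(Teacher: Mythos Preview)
Your proposal is correct and follows essentially the same route as the paper. The paper disposes of (a) by citing \cite[Propositions~4.2 and 4.4]{IO}, and establishes $\Sbb_{\TT}(\UU)=\UU$ at the start of (b) by citing \cite[Proposition~3.4]{IYo}; you instead sketch proofs of these facts, but the content is the same. Your derivations of (b) and (c) are identical to the paper's: iterate the formula in (a) to get ${}_{\stmod\UU}\Sbb^\ell={}_\TT\Sbb_*^\ell\circ{}_{\stmod\UU}\Sigma^{-\ell}$, then substitute ${}_\TT\Sbb^\ell={}_\TT\Sigma^m\circ\alpha$ and ${}_\TT\Sigma_*={}_{\stmod\UU}\Sigma^{d+2}$, using throughout that $\alpha_*$ (equivalently ${}_\TT\Sbb_*$) commutes with ${}_{\stmod\UU}\Sigma$.
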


\begin{proof}
(a) See \cite[Proposition 4.2]{IO} and \cite[Proposition~4.4]{IO}.

(b) Since $\UU$ is $d$-cluster-tilting, the identity
${}_\TT\Sbb(\UU)=({}_\TT\Sigma^{-d}\circ{}_\TT\Sbb)(\UU)=\UU$ holds by \cite[Proposition~3.4]{IYo}.  
By (a), we have
\[{}_{\stmod\UU}\Sbb^{d\ell}={}_\TT\Sbb^{d\ell}_{*}\circ{}_{\stmod\UU}\Sigma^{-d\ell}={}_\TT\Sigma_{*}^{dm}\circ\alpha_*^d\circ{}_{\stmod\UU}\Sigma^{-d\ell}={}_{\stmod\UU}\Sigma^{(d+2)m-d\ell}\circ\alpha_*^d.\]

(c) This is immediate from (b).
\end{proof}

Endomorphism algebras of $d\Z$-cluster-tilting objects are a source of examples of twisted periodic algebras.

\begin{proposition}\label{d-CY tilted}
Let $\TT$ be a $k$-linear $\Hom$-finite triangulated category, $M\in\TT$ a $d\Z$-cluster-tilting object, and $E:=\End_{\TT}(M)$.
\begin{enumerate}[\rm(a)]
\item The algebra $E$ is twisted $(d+2)$-periodic.
\item Assume that $\TT$ is algebraic. If ${}_\TT\Sigma^{dr}\simeq 1$ as functors on $\add M$, then $E$ is $(d+2)r$-periodic.
\end{enumerate}
\end{proposition}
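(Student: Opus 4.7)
The plan is to apply Proposition~\ref{mod CY is CY}(a) to the $d\Z$-cluster-tilting subcategory $\UU:=\add M$ of $\TT$. This equips $\stmod\UU$ with a triangulated structure and supplies the identity ${}_{\stmod\UU}\Sigma^{d+2}\simeq{}_\TT\Sigma_{*}$ of autoequivalences. The Yoneda-type functor $\UU\to\mod E^{\op}$, $X\mapsto\Hom_{\TT}(M,X)$, induces an equivalence $\mod\UU\simeq\mod E^{\op}$ that restricts to $\stmod\UU\simeq\stmod E^{\op}$ and sends the shift ${}_{\stmod\UU}\Sigma$ to $\Omega_{E}^{-1}$. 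Transporting across this equivalence then yields $\Omega^{d+2}_{E}\simeq({}_\TT\Sigma_{*})^{-1}$ as autoequivalences of $\stmod E$.

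For part (a), the next step is to realise $({}_\TT\Sigma_{*})^{-1}$ as the restriction functor $\phi^{*}$ for some $\phi\in\Aut_{k}(E)$. Since ${}_\TT\Sigma^{d}(\UU)=\UU$, the autoequivalence ${}_\TT\Sigma$ induces a permutation on the isomorphism classes of indecomposable summands of $M$, and I would use this permutation together with the Yoneda dictionary to construct $\phi$ explicitly, verifying that the autoequivalence ${}_\TT\Sigma_{*}$ on $\stmod E^{\op}$ is naturally isomorphic to $\phi^{*}$. Since the suspension in $\stmod E$ is realised by tensoring with $\Omega_{E^{e}}(E)$, tracking the bimodule structure through the Yoneda equivalence will promote the functorial identity to an honest bimodule isomorphism $\Omega^{d+2}_{E^e}(E)\simeq{}_{1}E_{\phi}$ in $\stmod E^{e}$, giving twisted $(d+2)$-periodicity.

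For part (b), the algebraic hypothesis on $\TT$ combined with ${}_\TT\Sigma^{dr}\simeq 1_{\add M}$ allows us to upgrade the twisted periodicity of (a) to an untwisted one. I would fix a dg enhancement $\mathcal{C}$ of $\TT$ (which exists as $\TT$ is algebraic) and lift the natural isomorphism ${}_\TT\Sigma^{dr}|_{\add M}\simeq 1$ to a dg-level quasi-isomorphism on a dg model of $\add M$. Taking $E\simeq H^{0}(\End_{\mathcal{C}}(M))$ and applying a relative bar resolution in the spirit of Section~\ref{sec:fCY to per}, I would then build an explicit isomorphism $\Omega^{(d+2)r}_{E^e}(E)\simeq E$, using the chosen trivialisation of the twist to undo the automorphism $\phi^{r}$ inherited from part (a).

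The main obstacle will be the passage from functor-level isomorphisms (which is what Proposition~\ref{mod CY is CY}(a) supplies) to strict bimodule-level isomorphisms — particularly in part (b), where one must coherently identify the dg-incarnation of ${}_\TT\Sigma^{dr}$ with the identity in a way that persists under iteration. This is analogous to, but subtler than, the sign analysis of Section~\ref{section 4}: the chosen trivialisation must be such that its $r$-fold iterate yields the identity as a bimodule morphism, not merely as a homotopy class. In part (a), the analogous difficulty lies in showing that the abstract autoequivalence ${}_\TT\Sigma_{*}$ is induced by a genuine $k$-algebra automorphism of $E$, rather than a more exotic Morita-type self-equivalence.
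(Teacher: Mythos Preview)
Your approach to (a) is exactly the paper's: the paper writes only ``immediate from Proposition~\ref{mod CY is CY}(a)'', and you have correctly unpacked what this means---transporting the identity ${}_{\stmod\UU}\Sigma^{d+2}\simeq{}_\TT\Sigma_*$ across the Yoneda equivalence $\stmod\UU\simeq\stmod E$, and realising the induced autoequivalence as $\phi^*$ for an honest $\phi\in\Aut_k(E)$ via a chosen isomorphism $M\simeq\Sigma M$ (which exists since $\add M=\add\Sigma M$ and $M$ may be taken basic). Your worry about upgrading from functor level to bimodule level is legitimate in general but not a real obstacle here: with $E$ basic, Proposition~\ref{Out Pic} gives $\Out_k(E)\simeq\Pic_k(E)$, so the invertible bimodule underlying the autoequivalence is forced to be of the form ${}_1E_\phi$.

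For (b), the paper gives no argument at all; it simply cites \cite[Theorem~1.1]{Du3}. Your proposal---lift to a dg (or Frobenius) enhancement of $\TT$ and build an explicit periodic bimodule resolution, using the trivialisation of $\Sigma^{dr}|_{\add M}$ to untwist the automorphism from (a)---is a faithful outline of how one would reprove Dugas's theorem from scratch, and indeed is close in spirit to what Dugas does (he constructs the periodic complex of bimodules from the $d$-cluster-tilting approximation triangles in a Frobenius model of $\TT$). So you are not taking a different route; you are proposing to rederive the cited result, and the coherence issues you flag are precisely the ones handled in \cite{Du3}.
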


\begin{proof}
(a) This is immediate from Proposition \ref{mod CY is CY}(a).

(b) See \cite[Theorem 1.1]{Du3}.
\end{proof}

The following is a typical example of an application of Proposition~\ref{d-CY tilted}.
For a finite-dimensional algebra $A$ with $\gldim A\le d$, the \emph{$(d+1)$-preprojective algebra} of $A$ is defined as $\Pi:=T_A\Ext^d_A(DA,A)$, where $T_A$ denotes the tensor algebra. 

\begin{proposition}
Let $A$ be a $d$-representation-finite algebra with $\gldim A\le d$, and $\Pi$ the $(d+1)$-preprojective algebra of $A$.
Then $\Pi$ is twisted $(d+2)$-periodic.
If $A$ is $(m/\ell)$-Calabi--Yau, then $\Pi$ is $(d+2)r$-periodic for $r = d(d\ell-m)/\gcd(m,d)$.
\end{proposition}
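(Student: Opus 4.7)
My plan is to apply Proposition~\ref{d-CY tilted} by realising $\Pi$ as the endomorphism algebra of a $d\mathbb{Z}$-cluster-tilting object in a suitable algebraic, Hom-finite, $k$-linear triangulated category. The natural candidate is Amiot's generalised cluster category $\mathcal{C} = \Db(\mod A)/\langle \nu_d \rangle$ (or rather its triangulated hull constructed from a dg model), where $\nu_d = \nu \circ [-d]$ is the higher Auslander--Reiten autoequivalence of $\Db(\mod A)$. By the work of Amiot and Iyama--Oppermann on generalised cluster categories of $d$-representation-finite algebras with $\gldim A \le d$, the category $\mathcal{C}$ is an algebraic, Hom-finite, $k$-linear triangulated category, and the image of $A$ becomes a $d\mathbb{Z}$-cluster-tilting object with
\[
\End_{\mathcal{C}}(A) \;=\; \bigoplus_{i\in\mathbb{Z}}\Hom_{\Db(\mod A)}\bigl(A,\nu_d^{i}A\bigr) \;=\; \bigoplus_{i\ge 0}\Hom_{\Db(\mod A)}\bigl(A,\nu_d^{-i}A\bigr) \;\cong\; \Pi,
\]
where the negative-index summands vanish because the positive $\nu_d$-orbit of $A$ concentrates in strictly positive cohomological degrees, and the identification with the $(d+1)$-preprojective algebra follows from the standard formula $\Ext_A^{di}(A,A) \cong \Hom_{\Db(\mod A)}(A,\nu_d^{-i}A)$ and the tensor-algebra presentation of $\Pi$.

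Given this setup, the first statement is immediate from Proposition~\ref{d-CY tilted}(a). For the second statement, suppose $A$ is $\frac{m}{\ell}$-Calabi--Yau, so that $\nu^{\ell}\simeq [m]$ on $\Db(\mod A)$, equivalently $\nu_d^{\ell}\simeq [m-d\ell]$. Since $\nu_d$ acts as the identity on the orbit category $\mathcal{C}$, we deduce that $[d\ell - m]\simeq 1$ as an autoequivalence of $\mathcal{C}$, and a fortiori on $\add A$. Setting $r = d(d\ell-m)/\gcd(m,d)$, the integer $dr = d^{2}(d\ell-m)/\gcd(m,d)$ is a multiple of $d\ell-m$ (since $\gcd(m,d)$ divides $d$, and hence divides $d^{2}$), so $\Sigma^{dr}\simeq 1$ on $\add A$. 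Proposition~\ref{d-CY tilted}(b), applicable because $\mathcal{C}$ is algebraic, then yields that $\Pi$ is $(d+2)r$-periodic, as required.

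The main obstacle is the construction and control of the cluster category $\mathcal{C}$: the naive orbit category $\Db(\mod A)/\langle \nu_d\rangle$ is typically not triangulated, and one must invoke Amiot's dg algebra model together with the Iyama--Oppermann theory of $d$-representation-finite algebras in order to obtain an algebraic triangulated hull in which $A$ becomes $d\mathbb{Z}$-cluster-tilting (rather than merely $d$-cluster-tilting) with endomorphism algebra isomorphic to $\Pi$. Verifying that the chosen model is indeed algebraic, which is a prerequisite for invoking Proposition~\ref{d-CY tilted}(b), and pinning down the $d\mathbb{Z}$-property for $A$ in $\mathcal{C}$ are the non-trivial technical points of the argument, both of which rely on substantial external results from higher Auslander--Reiten theory.
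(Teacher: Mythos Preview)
Your proposal is correct and follows essentially the same route as the paper: realise $\Pi$ as $\End_{\mathcal{C}_d(A)}(A)$ for $A$ a $d\mathbb{Z}$-cluster-tilting object in the (algebraic) $d$-cluster category, then apply Proposition~\ref{d-CY tilted}(a) for twisted periodicity and, using $\Sigma^{d\ell-m}\simeq 1$ on $\add A$, Proposition~\ref{d-CY tilted}(b) for the untwisted bound. One small slip: the displayed formula $\Ext_A^{di}(A,A)\cong\Hom_{\Db(\mod A)}(A,\nu_d^{-i}A)$ is not what you mean (the left side vanishes for $i>0$); the correct identification is $\Hom_{\Db(\mod A)}(A,\nu_d^{-i}A)\cong\Ext_A^d(DA,A)^{\otimes_A i}$, which is what gives the tensor-algebra description of $\Pi$.
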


\begin{proof}
Let $\CC_d(A)$ be the $d$-cluster category of $A$, that is, the triangulated hull of the orbit category $\Db(\mod A)/(\Sbb\circ[-d])$. Then $A$ is a $d\Z$-cluster-tilting object in $\CC_d(A)$ with $\End_{\CC_d(A)}(A)\simeq\Pi$.
Hence, $\Pi$ is twisted $(d+2)$-periodic, by Proposition~\ref{d-CY tilted}.

If $A$ is $(m/\ell)$-Calabi--Yau then $[d\ell-m] \simeq (\Sbb\circ[-d])^{-\ell}$ on $\Db(\mod A)$.
Let $F:\Db(\mod A)\to \CC_d(A)$ be the canonical functor. Then $\add A = F(\add A)\subset \CC_d(A)$, and hence $\Sigma^{d\ell -m} \simeq (\Sbb\circ\Sigma^{-d})^{-\ell} \simeq 1$ as functors on the full subcategory $\add A\subset\CC_d(A)$.
Thus, $\Pi$ is $(d+2)r$-periodic for $r = d(d\ell-m)/\gcd(m,d)$ by Proposition \ref{d-CY tilted}.
\end{proof}

Recall from Example~\ref{eg:fCYdimd} that any $d$-representation-finite algebra $A$ with $\gldim A\le d$ is twisted fractionally Calabi--Yau. Using this, we obtain the following result, which is an abelian analogue of Proposition \ref{d-CY tilted}.

\begin{theorem}\label{stable Auslander is CY}
Let $A$ be a $d$-representation-finite algebra with $\gldim A\le d$, $M$ the unique basic $d$-cluster-tilting $A$-module, and $E:=\underline{\End}_A(M)$ the stable $d$-Auslander algebra.
\begin{enumerate}[\rm(a)]
\item The algebra $E$
is twisted fractionally Calabi--Yau, and $T(E)$ is twisted periodic.
\item If $A$ is fractionally Calabi--Yau, then $E$ is fractionally Calabi--Yau, and $T(E)$ is periodic.
\end{enumerate}
\end{theorem}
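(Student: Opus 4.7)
The strategy is to realise $E$ (Morita equivalently) via the stable category $\stmod\UU$ of a $d\Z$-cluster-tilting subcategory $\UU$ of $\TT := \Db(\mod A)$, and then invoke Proposition~\ref{mod CY is CY}. Set $\nu_d := \nu\circ[-d]$ and $\UU := \add\{\nu_d^j(M) : j \in \Z\} \subseteq \TT$. By Iyama's higher Auslander--Reiten theory, $\UU$ is a $d$-cluster-tilting subcategory of $\TT$. One verifies that $\UU$ is in fact $d\Z$-cluster-tilting (i.e., $[d](\UU) = \UU$); this uses the fact that for a $d$-representation-finite algebra with $\gldim A \le d$, every indecomposable injective $A$-module is a direct summand of $M$, together with Iyama's explicit description of the $\nu_d$-orbits of the indecomposables of $\UU$.

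By Example~\ref{eg:fCYdimd}, $A$ is twisted $\tfrac{m}{\ell}$-Calabi--Yau with associated twist $\phi \in \Aut_k(A)$, so the autoequivalence $\alpha := \nu^\ell \circ [-m] \simeq \phi^*$ preserves $\UU$. Proposition~\ref{mod CY is CY}(b) then yields
\[
{}_{\stmod\UU}\Sbb^{\ell} \;\simeq\; {}_{\stmod\UU}\Sigma^{(d+2)m-\ell} \circ \alpha_{*}.
\]
The key remaining step is to identify $\stmod\UU$ with the perfect derived category $\per E$. Under the natural projection $\UU \twoheadrightarrow \stmod\UU$, the nonzero indecomposable objects correspond precisely to the non-projective indecomposable summands of $M$, which index the indecomposable projective $E$-modules, furnishing a Morita-type identification. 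Under this equivalence, ${}_{\stmod\UU}\Sbb$, ${}_{\stmod\UU}\Sigma$ and $\alpha_{*}$ correspond, respectively, to the Nakayama functor $\nu_E$, the suspension $[1]_E$, and an algebra automorphism of $E$ induced by $\phi$. This transfers the twisted fractional Calabi--Yau property to $E$ (specifically, $E$ is twisted $\tfrac{(d+2)m-\ell}{\ell}$-Calabi--Yau) and simultaneously yields $\gldim E < \infty$. Theorem~\ref{main theorem 2 again} then gives the twisted periodicity of $T(E)$, completing (a).

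For (b), if $A$ is (untwisted) fractionally Calabi--Yau then, after enlarging $\ell$ if necessary, we may take $\phi$ to be the identity in $\Out_k(A)$, so that $\alpha \simeq \mathrm{id}$ on $\TT$; the display above becomes untwisted, $E$ is (untwisted) fractionally Calabi--Yau, and Corollary~\ref{finite out} yields the periodicity of $T(E)$. The main technical obstacle is making precise the identification $\stmod\UU \simeq \per E$ through the theory of functor categories on cluster-tilting subcategories, as well as verifying $\gldim E < \infty$; both rely on tools from higher Auslander--Reiten theory not fully developed in the excerpt.
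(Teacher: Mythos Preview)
Your overall strategy coincides with the paper's: realise $\Db(\mod E)$ as $\stmod\UU$ for a $d\Z$-cluster-tilting subcategory $\UU\subset\Db(\mod A)$, apply Proposition~\ref{mod CY is CY}(b) to the (twisted) fractional Calabi--Yau property of $A$, and then invoke the main theorems for $T(E)$. The choice $\UU=\add\{\nu_d^j(M):j\in\Z\}$ agrees with the paper's $\add\{\nu_d^i(A):i\in\Z\}$, since every indecomposable summand of $M$ lies in the $\nu_d$-orbit of an indecomposable projective.

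The gap you yourself flag is the identification $\stmod\UU\simeq\per E$, and your sketch of it is not correct as written. There is no ``projection'' $\UU\twoheadrightarrow\stmod\UU$: the representable functors $\Hom_\UU(-,X)$ are precisely the projectives in $\mod\UU$, so the composite $\UU\hookrightarrow\mod\UU\to\stmod\UU$ sends every object to zero. In particular, the indecomposables of $\stmod\UU$ do not correspond to the non-projective summands of $M$; there are infinitely many of them, one for each indecomposable in $\Db(\mod E)$. The paper resolves this cleanly by quoting \cite[Theorem~4.7]{IO}, which gives an equivalence of categories $\UU\simeq\proj^{\Z}T(E)$ (and simultaneously the $d\Z$-cluster-tilting property $\UU[d]=\UU$). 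Then $\mod\UU\simeq\mod^{\Z}T(E)$ and Happel's theorem~\eqref{happel} yield the triangle equivalence $\stmod\UU\simeq\stmod^{\Z}T(E)\simeq\Db(\mod E)$; finiteness of $\gldim E$ is implicit here rather than proved separately.

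A smaller point: to ensure that the autoequivalence $\alpha_*$ on $\stmod\UU$ is induced by an honest algebra automorphism of $E$, the paper first replaces $(\ell,m)$ by suitable multiples so that the twist $\psi_*$ fixes each indecomposable projective $A$-module (hence each object of $\UU$) up to isomorphism. It then transports $\psi_*:\UU\to\UU$ across $\UU\simeq\proj^{\Z}T(E)$ to a graded automorphism $\phi\in\Aut_k^{\Z}(T(E))$ and restricts to $\varphi=\phi|_E$. Your claim that ``$\alpha_*$ corresponds to an algebra automorphism of $E$ induced by $\phi$'' is morally right but needs exactly this detour through $T(E)$ to be justified.
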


\begin{proof}
By \cite[Theorem 4.7]{IO}, $\TT:=\Db(\mod A)$ has a $d$-cluster-tilting subcategory
\[\UU:=\add\{\nu_d^i(A)\mid i\in\Z\}\]
such that $\UU[d]=\UU$, and there is an equivalence $\UU\simeq\proj^{\Z}T(E)$. By \eqref{mod Z/nZ B} and \eqref{happel}, we have triangle equivalences $\Db(\mod E)\simeq\stmod^{\Z}T(E)\simeq\stmod\UU$.

(a) By our assumptions it follows that $A$ is twisted fractionally Calabi--Yau, and thus there exist $\ell,m\in\Z$, $\ell\ge1$, and $\psi\in\Aut_k(A)$ such that $\nu^\ell=[m]\circ\psi_*$. Possibly replacing $\ell$ and $m$ by multiples $a\ell$ and $am$ for some $a\in\Z$, we may assume that $\psi_*(P)\simeq P$ for each $P\in\proj A$. Then $\psi_*(X)\simeq X$ holds for all $X\in\UU$.
Take $\phi\in\Aut_k^{\Z}(T(E))$ such that $\psi_*:\UU\to\UU$ corresponds to $\phi_*:\proj^{\Z}T(E)\to\proj^{\Z}T(E)$ under the equivalence $\stmod^{\Z}T(E)\simeq\stmod\UU$.
Thus, setting $\varphi:=\phi|_E\in\Aut_k(E)$, we have a diagram
\[\xymatrix@R1em{\stmod\UU\ar[r]^\sim\ar[d]^{(\psi_*)_*}&\stmod^{\Z}T(E)\ar[d]^{\phi_*}&\Db(\mod E)\ar[d]^{\varphi_*}\ar[l]_{\sim}\\
\stmod\UU\ar[r]^\sim&\stmod^{\Z}T(E)&\Db(\mod E)\ar[l]_\sim
}\]
which commutes up to isomorphism of functors, and where the horizontal maps are triangle equivalences.

By Proposition~\ref{mod CY is CY}(b), 
\begin{align*}
  {}_{\stmod\UU}\Sbb^{d\ell}&={}_{\stmod\UU}\Sigma^{(d+2)m-d\ell}\circ(\psi_*)_*^{d} \quad\mbox{on}\quad \UU, \\
\intertext{which translates into}
\nu^{d\ell}&=[(d+2)m-d\ell]\circ\varphi_*^{d}\quad\mbox{on}\quad \Db(\mod E).
\end{align*}
This means that the algebra $E$ is twisted fractionally Calabi--Yau, and thus $T(E)$ is twisted periodic.

(b) The assertion follows from the argument above, where $\psi$, $\phi$ and $\varphi$ are specialized to the identity.
\end{proof}

Applying Theorem~\ref{stable Auslander is CY} to the path algebras of Dynkin type yields the following.
\begin{example}
Let $Q$ be a Dynkin quiver with Coxeter number $h$. Then the stable Auslander algebra $\Lambda$ of the path algebra $kQ$ is $\frac{2h-6}{h}$-Calabi--Yau.
We note that this result was also obtained in \cite{Lad} for $Q \neq A_{2n}$.
By Theorem~\ref{trivial extension is periodic}, the trivial extension algebra $T(\Lambda)$ is $6(h-2)$-periodic.
\end{example}

\subsection{Examples from tensor products and geometry} \label{example of CY}

We shall use tensor products of algebras of Dynkin type to construct families of algebras of unbounded Calabi--Yau dimensions and, thus, corresponding trivial extension algebras with unbounded minimal periods. For this, we need the following refinement of a result in \cite{HI}.

\begin{proposition} \label{prop:cydim}
Let $A_1,\ldots,A_t$ be fractionally Calabi--Yau Iwanaga--Gorenstein $k$-algebras, such that $A_i/\rad A_i$ is separable and $\CYdim A_i = (m_i,\ell_i)$ for each $i$, and 
\begin{equation} \label{l-m}
\ell=\lcm(\ell_1,...,\ell_t)\;, \quad m=\ell\left(\frac{m_1}{\ell_1}+\cdots+ \frac{m_t}{\ell_t}\right).
\end{equation}
\begin{enumerate}[\rm(a)]
\item The algebra $A:=\bigotimes_{i=1}^t A_i$ is Iwanaga--Gorenstein and $\frac{m}{\ell}$-Calabi--Yau.
 \label{ex:tensorprod}
\item If $A$ is ring-indecomposable, then $\CYdim A = (m, \ell)$.
\end{enumerate}
\end{proposition}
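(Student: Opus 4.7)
The natural approach is to induct on $t$, reducing to $t = 2$; the general case then iterates. Writing $A_1, A_2$ with $\CYdim A_i = (m_i, \ell_i)$ and $A = A_1 \otimes_k A_2$, the problem splits into the Iwanaga--Gorenstein and the fractionally Calabi--Yau parts.

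For (a), I would verify the Iwanaga--Gorenstein property first: taking finite injective resolutions $I_i^\bullet \to A_i$ whose terms are of the form $I_i^j = D(P_i^j)$ for projectives $P_i^j \in \proj A_i$, the double complex $I_1^\bullet \otimes_k I_2^\bullet$ resolves $A$, and each term $D(P_1^j \otimes_k P_2^{j'})$ is injective over $A$ since tensor products of projectives over a field are projective. The symmetric argument handles the opposite side. For the fractionally Calabi--Yau property, Proposition~\ref{characterise CY}\eqref{fcy} reduces the task to exhibiting an isomorphism $(DA)^{\Lotimes_A \ell} \simeq A[m]$ in $\Db(\mod A^e)$. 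The crucial ingredient is a Künneth-type identity
\[
(DA_1 \otimes_k DA_2)^{\Lotimes_A \ell} \simeq (DA_1)^{\Lotimes_{A_1} \ell} \otimes_k (DA_2)^{\Lotimes_{A_2} \ell}
\]
in $\Db(\mod A^e)$, obtained by taking projective $A_i^e$-resolutions $P_i \to DA_i$ (so that $P_1 \otimes_k P_2 \to DA$ is a projective $A^e$-resolution) and iterating the elementary factorisation
\[
(M_1 \otimes_k M_2) \otimes_A (N_1 \otimes_k N_2) \cong (M_1 \otimes_{A_1} N_1) \otimes_k (M_2 \otimes_{A_2} N_2).
\]
Applying $(DA_i)^{\Lotimes_{A_i} \ell_i} \simeq A_i[m_i]$ a total of $\ell/\ell_i$ times in each factor yields $(DA)^{\Lotimes_A \ell} \simeq A_1[m_1 \ell/\ell_1] \otimes_k A_2[m_2 \ell/\ell_2] = A[m]$, as required.

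For (b), set $(m', \ell') := \CYdim A$; part (a) gives $\ell' \mid \ell$, so the task reduces to $\ell \mid \ell'$. Since $A$ is $\frac{m'}{\ell'}$-Calabi--Yau, the Künneth identity above rewrites $(DA)^{\Lotimes_A \ell'} \simeq A[m']$ as $X_1 \otimes_k X_2 \simeq A[m']$ in $\Db(\mod A^e)$, where $X_i := (DA_i)^{\Lotimes_{A_i} \ell'}$. The cohomological Künneth formula $H^n(X_1 \otimes_k X_2) \simeq \bigoplus_{i+j=n} H^i(X_1) \otimes_k H^j(X_2)$, valid because $k$ is a field, is concentrated in degree $-m'$; since each $X_i$ is nonzero (being invertible in the derived Picard group of $A_i$), a sumset argument on cohomological supports forces each $X_i \simeq M_i[m_i']$ to be concentrated in a single degree, with $m_1' + m_2' = m'$ and $M_1 \otimes_k M_2 \simeq A_1 \otimes_k A_2$ as $A^e$-modules. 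Each $M_i$ is then an honest invertible $A_i$-bimodule, so $\dim_k M_i = \dim_k A_i$. Restricting the isomorphism to the $A_1^e$-module structure yields $M_1^{\dim_k A_2} \simeq A_1^{\dim_k A_2}$, and Krull--Schmidt cancellation over the Artinian algebra $A_1^e$ gives $M_1 \simeq A_1$; symmetrically $M_2 \simeq A_2$. Hence each $A_i$ is $\frac{m_i'}{\ell'}$-Calabi--Yau, so by minimality of $\ell_i$, $\ell_i \mid \ell'$; this forces $\ell = \lcm(\ell_1, \ell_2) \mid \ell'$, whence $\ell' = \ell$, and then $m' = m$ follows from the rational equality $m'/\ell' = m/\ell$ established in (a).

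The main obstacle is setting up the derived Künneth identity with enough care that the resulting isomorphism genuinely lives in $\Db(\mod A^e)$ — endowing the factors $M_i$ with the correct bimodule structure — rather than merely in $\Db(\mod k)$. The ring-indecomposability hypothesis in (b) is then expected to play its role in ensuring that the Krull--Schmidt cancellation step isolates $M_i \simeq A_i$ without ambiguity from nontrivial twists supported on possible direct factors.
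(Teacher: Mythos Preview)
Your approach is correct, and for part (b) it differs meaningfully from the paper's. The paper cites part (a) to \cite{HI}; your sketch reproduces essentially that argument. For (b), the paper isolates a separate lemma (Lemma~\ref{lma:cydim}): if $\bigotimes_i X_i\simeq\bigotimes_i Y_i$ is \emph{indecomposable} in $\Db(\mod(\bigotimes_i A_i))$, then the local endomorphism ring forces some pure tensor summand $\bigotimes_i f_i^{(r)}$ of an isomorphism to be invertible, whence each $X_i\simeq Y_i[\ell_i]$. Applied with $A_i^e$ in place of $A_i$, this is exactly where the ring-indecomposability of $A$ (i.e.\ indecomposability of $A$ in $\mod A^e$) is used. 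Your route---cohomological K\"unneth, a sumset argument to force single-degree concentration, then Krull--Schmidt cancellation from $M_1^{\dim_kA_2}\simeq A_1^{\dim_kA_2}$---is different, and in fact does \emph{not} use the ring-indecomposability hypothesis at any point: the sumset argument and the cancellation step both hold unconditionally. Your closing paragraph worries about where that hypothesis enters; it does not, so your argument actually yields a slightly stronger statement than the one in the paper. One small caveat: iterating the $t=2$ case of (b) via induction would require the induction hypothesis on the $(t-1)$-fold tensor, whose ring-indecomposability is not automatic from that of $A$; but since your argument dispenses with the hypothesis anyway (or, alternatively, extends verbatim to general $t$), this is harmless.
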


We first prove the following lemma.

\begin{lemma} \label{lma:cydim}
  For all $i\in\{1,\ldots, t\}$, let $A_i$ be a $k$-algebra, and $X_i,Y_i\in\DDD^{\rm b}(\mod A_i)$ complexes such that
  $\bigotimes_{i=1}^t X_i\simeq \bigotimes_{i=1}^t Y_i$ is indecomposable in $\DDD^{\rm b} (\mod (A_1\otimes_k\cdots \otimes_k A_t))$.
  Then there exist $\ell_1,\ldots,\ell_t\in\Z$ such that $X_i\simeq Y_i[\ell_i]$ for all $i$, and $\sum_{i=1}^t\ell_i = 0$.
\end{lemma}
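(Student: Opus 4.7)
Proof plan. Since tensor product over $k$ distributes over direct sums, any nontrivial decomposition of some $X_i$ would force a decomposition of $\bigotimes_i X_i$, contradicting the assumption that the latter is indecomposable. Hence each $X_i$, and similarly each $Y_i$, is indecomposable in $\Db(\mod A_i)$.

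The main idea is to restrict the given isomorphism to a single tensor factor and exploit formality over the field $k$. Fix $i \in \{1,\dots,t\}$ and set $M_i := \bigotimes_{j\neq i}X_j$ and $N_i := \bigotimes_{j\neq i}Y_j$, regarded as objects of $\Db(\mod k)$. Replacing each $X_j, Y_j$ by a projective resolution, we obtain identifications $\bigotimes_j X_j \simeq X_i \otimes_k M_i$ and $\bigotimes_j Y_j \simeq Y_i \otimes_k N_i$ in $\Db(\mod A_i)$. Since $k$ is a field, every bounded complex of $k$-vector spaces is formal, so $M_i \simeq \bigoplus_n H^n(M_i)[-n]$ and analogously for $N_i$. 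Writing $h_n := \dim_k H^n(M_i)$ and $h'_n := \dim_k H^n(N_i)$, the given isomorphism becomes
\[
\bigoplus_n X_i[-n]^{h_n} \simeq \bigoplus_n Y_i[-n]^{h'_n} \quad \text{in } \Db(\mod A_i).
\]
Both sides are nonzero since $M_i$ and $N_i$ are nonzero (a tensor product of nonzero bounded complexes of $k$-vector spaces being nonzero). As $\Db(\mod A_i)$ is Krull--Schmidt, and as the shifts of the indecomposable $X_i$ (resp.\ $Y_i$) are pairwise non-isomorphic, matching indecomposable summands on each side forces $X_i \simeq Y_i[\ell_i]$ for some uniquely determined $\ell_i \in \Z$.

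Putting these together, $\bigotimes_i X_i \simeq \bigotimes_i Y_i[\ell_i] \simeq \bigl(\bigotimes_i Y_i\bigr)[\textstyle\sum_i\ell_i]$; comparing with the hypothesis $\bigotimes_i X_i \simeq \bigotimes_i Y_i$ yields $\bigotimes_i Y_i \simeq \bigl(\bigotimes_i Y_i\bigr)[\textstyle\sum_i\ell_i]$. Since a nonzero bounded complex is not isomorphic to any nonzero shift of itself (its cohomological support would shift), we conclude $\sum_i \ell_i = 0$. The main technical point is the identification $\bigotimes_j X_j \simeq X_i \otimes_k M_i$ in the derived category, which requires resolving the $X_j$ appropriately so that derived and underived tensor products agree; over the field $k$ this is routine but merits a careful statement in the writeup.
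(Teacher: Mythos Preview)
Your proof is correct, but it follows a genuinely different route from the paper's. The paper proceeds by writing down the K\"unneth decomposition
\[
\Hom_{\Db(\mod A)}\Bigl(\bigotimes_i X_i,\bigotimes_i Y_i\Bigr)
=\bigoplus_{\sum_i\ell_i=0}\bigotimes_i\Hom_{\Db(\mod A_i)}(X_i,Y_i[\ell_i]),
\]
expressing the given isomorphism as a sum $\sum_r f_1^{(r)}\otimes\cdots\otimes f_t^{(r)}$, and then using that the endomorphism ring of an indecomposable object is local to conclude that one of these elementary tensors is already an isomorphism. By contrast, you bypass the K\"unneth formula for $\Hom$ entirely: you restrict along $A_i\hookrightarrow A$, invoke formality of complexes of $k$-vector spaces to collapse the remaining tensor factors to their cohomology, and then apply Krull--Schmidt in $\Db(\mod A_i)$ to match shifts of $X_i$ with shifts of $Y_i$. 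The final step, recovering $\sum_i\ell_i=0$ from the fact that a nonzero bounded complex admits no nontrivial self-shift, is a nice replacement for the constraint that is built into the paper's indexing of the K\"unneth decomposition. Your argument is arguably more elementary in that it avoids the derived K\"unneth formula, at the cost of the extra restriction-and-formality step; the paper's argument is shorter once that formula is taken for granted and has the minor advantage of not needing to first observe that each $X_i$ and $Y_i$ is indecomposable.
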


\begin{proof}
For ease of notation, set $A=A_1\otimes_k\ldots\otimes_k A_t$. First, observe that
\[
\Hom_{\DDD^{\rm b}(\mod A)}\left(\bigotimes_{i=1}^t X_i, \bigotimes_{i=1}^t Y_i\right) = \bigoplus_{\substack{\ell_1,\ldots,\ell_t\in\Z \\ \sum_i\ell_i=0}}\, \bigotimes_{i=1}^t\Hom_{\DDD^{\rm b}(\mod A_i)}(X_i, Y_i[\ell_i])
\]
and hence any morphism  $f:\bigotimes_{i=1}^t X_i \to \bigotimes_{i=1}^t Y_i$ can be written as
\[
f = \sum_{r=1}^R f_1^{(r)}\otimes\ldots\otimes f_t^{(r)}\,, \quad\mbox{where}\quad
f_i^{(r)}:X_i\to Y_i[\ell_i^{(r)}] \quad \mbox{for some}\quad \ell_i^{(r)}\in\Z, 
\]
subject to the condition $\sum_{i=1}^t \ell_i^{(r)} =0$ for each $r\in\{1,\ldots,R\}$.

Assume that $f = \sum_{r=1}^R f_1^{(r)}\otimes\ldots\otimes f_t^{(r)}$ is an isomorphism. Since $\bigotimes_{i=1}^tX_i$ is indecomposable, $\End_{\DDD^{\rm b}(\mod A)}\left( \bigotimes_{i=1}^tX_i\right)$ is a local ring, and thus it follows that $f_1^{(r)}\otimes\ldots\otimes f_t^{(r)}$ must be an isomorphism for some $r$. 
But then $f_i^{(r)}:X_i\to Y_i[\ell_i^{(r)}]$ is an isomorphism for each $i$ and, as $\sum_{i=1}^t\ell_i^{(r)} = 0$, this proves the assertion in the lemma.
\end{proof}

\begin{proof}[Proof of Proposition~\ref{prop:cydim}]
(a) This is \cite[Proposition 1.4]{HI}.

(b) Let $a$ and $b$ be integers such that $A$ is $\frac{b}{a}$-Calabi--Yau. Since $A$ is $\frac{m}{\ell}$-Calabi--Yau by (a), it suffices to show that $\ell$ divides $a$. 
We consider the algebras $A_i$ as objects in $\DDD^{\rm b}(\mod A_i^{\mathrm{e}})$.
By Proposition~\ref{characterise CY}\eqref{fcy}, the Calabi--Yau property of $A$ gives an isomorphism
 \[
 A_1 \otimes_k\cdots \otimes_k A_{t-1} \otimes_k A_t [b] = A[b] \simeq (DA)^{\Lotimes_A a} \simeq
(DA_1)^{\Lotimes_{A_1} a}\otimes_k\cdots\otimes_k (DA_t)^{\Lotimes_{A_t}a}
  \]
  in $\Db(\mod A^{\mathrm{e}})$,
whence Lemma~\ref{lma:cydim} implies the existence of integers $n_1,\ldots, n_t\in\Z$ such that $A_i[n_i]\simeq (DA)^{\Lotimes_A a}$ for each $i$. Thus $A_i$ is $(n_i/a)$-Calabi--Yau. Since $\CYdim A_i = (m_i,\ell_i)$ it follows that $\ell_i$ divides $a$ for each $i$ and, consequently, so does $\ell = \lcm(\ell_1,\ldots,\ell_t)$.
\end{proof}

Combining Example~\ref{ex:dynkin} and Proposition~\ref{prop:cydim} above with our main result, Theorem~\ref{trivial extension is periodic}, we get the following corollary.

\begin{corollary} \label{periodoftp}
  Let $A = (kQ)^{\otimes t}$, where $Q$ is a quiver of Dynkin type and $t$ a positive integer. Then the minimal period of the trivial extension algebra $T(A)$ is
  \[ 
  \begin{cases}
    2((h-2)t + h) & \mbox{if ${\rm char} k \ne2$, $Q$ is of type $A_n$, and $t$ and $n$ are even;} \\
    ((h-2)t + h)/2 & \mbox{if ${\rm char} k = 2$, and $Q$ is of type $A_1$, $D_n$ with $n$ even, $E_7$ or $E_8$;} \\
    (h-2)t+h & \mbox{otherwise.}
  \end{cases}
  \]
\end{corollary}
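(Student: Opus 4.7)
The plan is to compute $\CYdim A$ via the tensor-product formula (Proposition \ref{prop:cydim}) and then to read off the minimal period of $T(A)$ from Theorem \ref{trivial extension is periodic}(b).

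First I would apply Example \ref{ex:dynkin} to get $\CYdim(kQ) = ((h-2)/2,\ h/2)$ in the ``special'' case $Q\in\{A_1, D_{2n}, E_7, E_8\}$, and $(h-2,\ h)$ otherwise. Then Proposition \ref{prop:cydim}(a) yields that $A = (kQ)^{\otimes t}$ is $m/\ell$-Calabi--Yau with $(m,\ell) = (t(h-2)/2,\ h/2)$ or $(t(h-2),\ h)$ accordingly. To promote this to $\CYdim A = (m,\ell)$ through Proposition \ref{prop:cydim}(b), I need to verify that $A$ is ring-indecomposable. For this, I would first show that $Z(kQ) = k$ for a connected acyclic quiver: centrality with each idempotent $e_i$ forces any central element into $\bigoplus_i e_i(kQ)e_i = \bigoplus_i ke_i$, after which commutation with the arrows plus connectedness of $Q$ forces all the coefficients to coincide. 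Combining this with the standard identity $Z(A_1 \otimes_k A_2) = Z(A_1) \otimes_k Z(A_2)$ for finite-dimensional $k$-algebras gives $Z(A) = k^{\otimes t} = k$, so $A$ has no non-trivial central idempotents and is ring-indecomposable.

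With $\CYdim A$ in hand, Theorem \ref{trivial extension is periodic}(b) states that the minimal period of $T(A)$ equals $\ell+m$ when $(-1)^{\ell+m} = 1$ in $k$ and $2(\ell+m)$ otherwise, where $\ell+m = ((h-2)t+h)/2$ in the special case and $\ell+m = (h-2)t+h$ in the generic case. What remains is a parity check. In the special case, $h/2 \in \{1,\ 2n-1,\ 9,\ 15\}$ is always odd and $(h-2)/2 \in \{0,\ 2n-2,\ 8,\ 14\}$ is always even, so $\ell+m$ is odd for every $t\ge 1$; this yields period $((h-2)t+h)/2$ when $\mathrm{char}\,k = 2$ and $(h-2)t+h$ otherwise. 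In the generic case, $h$ is even for every remaining Dynkin type except $A_n$ with $n$ even (where both $h$ and $h-2$ are odd), so $\ell+m$ is odd exactly when $Q = A_n$ with $n$ even and $t$ is also even, giving period $2((h-2)t+h)$ when $\mathrm{char}\,k \ne 2$ (and $(h-2)t+h$ when $\mathrm{char}\,k = 2$); every other subcase has $\ell+m$ even and hence period $(h-2)t+h$. Assembling the cases reproduces the three-way formula of the corollary.

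I expect the ring-indecomposability verification of $(kQ)^{\otimes t}$ to be the only subtle point of the argument; everything else reduces to a direct computation of parities combined with results already established in the paper.
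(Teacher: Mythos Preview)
Your proposal is correct and follows essentially the same route as the paper: compute $\CYdim A$ from Example~\ref{ex:dynkin} and Proposition~\ref{prop:cydim}, then apply Theorem~\ref{trivial extension is periodic}(b). The paper compresses the parity analysis into ``a straightforward calculation'' and silently uses the ring-indecomposability of $(kQ)^{\otimes t}$ needed for Proposition~\ref{prop:cydim}(b); your explicit verification via $Z(kQ)=k$ and $Z(A_1\otimes_k A_2)=Z(A_1)\otimes_k Z(A_2)$ fills that gap correctly.
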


\begin{proof}
  By Example~\ref{ex:dynkin} and Proposition~\ref{prop:cydim},
  $\CYdim\left((kQ)^{\otimes t}\right) = (t(h-2)/2, h/2)$ if $Q$ is of type $A_1$, $D_n$ with $n$ even, $E_7$ or $E_8$, and $\CYdim\left((kQ)^{\otimes t}\right) = (t(h-2), h)$ otherwise.
  The result now follows, by a straightforward calculation, from Theorem~\ref{trivial extension is periodic}.
\end{proof}

We remark that, except for the case $Q=A_1$, the algebra $A$ in Corollary~\ref{periodoftp} is wild (for $k$ algebraically closed) whenever $t\ge4$ \cite[Proposition~2.1(a)]{Le}. 
Then $T(A)$ is also wild, since $A$ is a quotient algebra of $T(A)$.
As mentioned in the introduction, the existence of a family of wild symmetric algebras with unbounded minimal periods appears to be previously unknown.
Note that for $Q$ of type $A_2$, the algebra $A$ is isomorphic to the incidence algebra of the Boolean lattice with $2^n$ elements.
In the next subsection, we will give more examples of fractionally Calabi--Yau incidence algebras.

As notion of `Calabi--Yau' originated in geometry, and the notion of `fractional Calabi--Yau' is a branch derived from it, we would like to mention a few examples that are related to algebraic geometry.
\begin{example}\label{eg:fCYdim3}
\begin{enumerate}[\rm(a)]
\item Geigle--Lenzing projective spaces \cite{HIMO} give a rich source of fractionally Calabi--Yau algebras of finite global dimension, called \emph{$d$-canonical algebras}. For $d=1$, they are the \emph{canonical algebras} associated with weighted projective lines \cite{GL}.
In fact, each $d$-canonical algebra of type $(p_1,\ldots,p_n)$ satisfying $n-d-1=\sum_{i=1}^n\frac{1}{p_i}$ is $\frac{dp}{p}$-Calabi--Yau for $p:=\lcm(p_1,\ldots,p_n)$. 
In the case $d=1$, there are 4 types: $(2,2,2,2)$, $(3,3,3)$, $(2,4,4)$ and $(2,3,6)$; see \cite{BES} and \cite{KLM} for different proofs of these cases.
In the case $d=2$, there are 18 types: $(2, 3, 7, 42)$, $(2, 3, 8, 24)$, $(2, 3, 9, 18)$, $(2, 3, 10, 15)$, $(2, 3, 12, 12)$, $(2, 4, 5, 20)$, $(2, 4, 6, 12)$, $(2, 4, 8, 8)$, $(2, 5, 5, 10)$, $(2, 6, 6, 6)$, $(3, 3, 4, 12)$, $(3, 3, 6, 6)$, $(3, 4, 4, 6)$, $(4, 4, 4, 4)$, $(2, 2, 2, 3, 6)$, $(2, 2, 2, 4, 4)$, $(2, 2, 3, 3, 3)$, $(2, 2, 2, 2, 2, 2)$.

There is another related source of fractionally Calabi--Yau algebras, called \emph{CM-canonical algebras} \cite{KLM,HIMO}, which appear in the study of singularity categories of Geigle-Lenzing hypersurfaces.

\item Additional examples, arising from algebraic geometry, of triangulated categories satisfying the fractional Calabi--Yau property, can be found in \cite{FK,K}.
\end{enumerate}
\end{example}

For $d$-canonical algebras, Theorem~\ref{trivial extension is periodic} gives the following result.

\begin{corollary}
Let $A$ be a $d$-canonical algebra of type $(p_1,\ldots,p_n)$ such that $n-d-1=\sum_{i=1}^n\frac{1}{p_i}$. Then $T(A)$ is $2(d+1)p$-periodic for $p:=\lcm(p_1,\ldots,p_n)$. 
\end{corollary}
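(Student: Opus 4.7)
The plan is essentially a one-line application of the two main inputs already assembled in the paper. By Example~\ref{eg:fCYdim3}(a), any $d$-canonical algebra $A$ of type $(p_1,\ldots,p_n)$ satisfying $n-d-1=\sum_{i=1}^n\tfrac{1}{p_i}$ is $\tfrac{dp}{p}$-Calabi--Yau, where $p=\lcm(p_1,\ldots,p_n)$. In particular, $A$ has finite global dimension and is fractionally Calabi--Yau with parameters $(m,\ell)=(dp,p)$ (not necessarily the minimal ones).

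Next I would invoke Theorem~\ref{trivial extension is periodic}(a), which says that for any finite-dimensional algebra $A$ of finite global dimension over $k$ with $A/\rad A$ separable that is $\tfrac{m}{\ell}$-Calabi--Yau, the trivial extension $T(A)$ satisfies
\[
\Omega^{\ell+m}_{T(A)^e}(T(A))\simeq {}_\varphi T(A)_1\quad\text{in}\quad \mod T(A)^e,
\]
and in particular $T(A)$ is $2(\ell+m)$-periodic. Substituting $\ell=p$ and $m=dp$, one obtains $2(\ell+m)=2(p+dp)=2(d+1)p$, which is exactly the asserted period.

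The only small things to check are the standing hypotheses of Theorem~\ref{trivial extension is periodic}: $d$-canonical algebras are finite-dimensional, have finite global dimension, and are defined over a field under which $A/\rad A$ is separable (they are elementary algebras, so $A/\rad A$ is a product of copies of $k$). These are part of the setup of \cite{HIMO}, so no genuine obstacle arises. There is no attempt here to claim that $2(d+1)p$ is the \emph{minimal} period; only Theorem~\ref{trivial extension is periodic}(a) is needed, not part (b), because the given Calabi--Yau datum $(dp,p)$ may not be the minimal one in the sense of $\CYdim$.

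\begin{proof}
By Example~\ref{eg:fCYdim3}(a), the algebra $A$ is $\tfrac{dp}{p}$-Calabi--Yau. Since $A$ is a finite-dimensional elementary $k$-algebra of finite global dimension, Theorem~\ref{trivial extension is periodic}(a) applied with $\ell=p$ and $m=dp$ yields that $T(A)$ is $2(\ell+m)$-periodic, that is, $2(d+1)p$-periodic.
\end{proof}
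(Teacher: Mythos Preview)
Your proof is correct and matches the paper's approach: the corollary is stated in the paper without an explicit proof, as a direct consequence of Theorem~\ref{trivial extension is periodic} applied to the Calabi--Yau dimension $\tfrac{dp}{p}$ recorded in Example~\ref{eg:fCYdim3}(a). Your verification of the standing hypotheses and the remark that minimality is not claimed are appropriate additions.
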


\subsection{Examples from incidence algebras}\label{subsec:incidence}

We assume in the following that all posets are finite. 
A poset $P$ is said to be \emph{bounded} if it has a global maximum and a global minimum. Recall that the Hasse quiver $H_P$ of $P$ is the quiver whose vertices are elements of $P$ and arrows are the covering relations, i.e.\ $x\to y$ if $x<y$ and there is no other $z\in P$ with $x<z<y$.

\begin{definition} 
Let $P$ be a poset. The \emph{incidence algebra} $k[P]$ is the bound quiver algebra $kH_P/I$ where $I$ is the ideal generated by $\rho-\rho'$ for all pairs $(\rho,\rho')$ of parallel paths in $P$, i.e.\ paths with the same source $s(\rho)=s(\rho')$ and same target $t(\rho)=t(\rho')$.
\end{definition}

Note that incidence algebras of bounded posets have finite global dimension, as the quiver $H_P$ is directed.
It is then natural to ask which ones of these algebras are 
fractionally Calabi--Yau.  We have already shown in the previous subsection that when $P$ is the Boolean lattice, then $k[P]$ is fractionally Calabi--Yau.  Another fundamental example is the following one.

\begin{example}\label{eg:fCYdim2}
Chapoton conjectured in \cite{Cha} that the incidence algebra $k[T_n]$ of the $n$-th Tamari lattice $T_n$ is $\frac{n(n-1)}{2n+2}$-Calabi--Yau, and it was proved in \cite{R} by Rognerud.
In fact, Rognerud showed that for $n\geq 3$, $\CYdim(T_n)=(n(n-1), 2n+2)$; see \cite[Remark 8.4]{R}.
\end{example}

A related conjecture of Chapoton predicts that the incidence algebras of the distributive lattices of order ideals of the positive root posets from semisimple Lie algebras are fractionally Calabi--Yau, see \cite[Conjecture 5.3]{Cha} for a conjecture that would imply this for Dynkin type $A_n$ and \cite{Y} for a discussion of this conjecture for general Dynkin types. More generally, the fractional Calabi--Yau property seems to be deeply intertwined with the periodicity of the Coxeter transformations (c.f.{} Section~\ref{sec:DynkinCY}) which, in turn, is related to the notion of rowmotion in combinatorics, see for example \cite{MTY}.

Another connection of the fractional Calabi--Yau property of the incidence algebra with the properties of the associated poset is studied in \cite{DPW}.

\medskip 
We have seen that, for an algebra $A$ of finite global dimension, the fractional Calabi--Yau property is equivalent to periodicity of the trivial extension algebra $T(A)$. 
On the other hand, (twisted) periodicity of a symmetric algebra is something that can be checked using computer packages such as \cite{QPA}.
Thus, this opens a new approach to the aforementioned unpublished conjecture of Chapoton, as well as to the classification of fractionally Calabi--Yau algebras of finite global dimension in general.
To demonstrate our methods, we show in this subsection some new examples of incidence algebras that were not previously known to be fractionally Calabi--Yau.

Let us start by showing that, for many posets, the outer automorphism group is finite, and thus that the periodicity conjecture (Question \ref{conj:periodicity}) is true for the trivial extensions of the corresponding incidence algebras.

\begin{proposition}\cite[Corollary 7.3.7]{SO} \label{outerautoposet}
Let $P$ be a finite poset containing an element $x \in P$ that is comparable with any other element in $P$. Then any automorphism of the incidence algebra $k[P]$ of $P$ is the composition of an inner automorphism of $k[P]$ with an automorphism of $P$. In particular, the outer automorphism group of $k[P]$ is finite.
\end{proposition}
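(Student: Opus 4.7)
My plan is to show that every $\phi\in\Aut_k(k[P])$ factors as an inner automorphism followed by the algebra automorphism induced by a poset automorphism of $P$. First, since any two complete sets of primitive orthogonal idempotents in $k[P]$ are conjugate (see Section~\ref{section 1.1}), I would compose $\phi$ with a suitable inner automorphism to arrange that $\phi(e_y)=e_{\sigma(y)}$ for some permutation $\sigma$ of $P$, where $\{e_y\mid y\in P\}$ denotes the standard set of primitive idempotents. I would then argue $\sigma\in\Aut(P)$ by noting that $\phi$ preserves the radical filtration, while the covering relation ``$y$ covers $z$'' is precisely detected by non-vanishing of $e_y(\rad k[P]/\rad^2 k[P])e_z$. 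Composing with the automorphism of $k[P]$ induced by $\sigma^{-1}\in\Aut(P)$ then reduces the task to the case where $\phi$ fixes every $e_y$.

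For such an idempotent-fixing $\phi$, I would use the fact that for every pair $z\le y$ in $P$ the subspace $e_yk[P]e_z$ is one-dimensional (parallel paths are identified in $k[P]$), so $\phi$ acts on it by a scalar $c_{zy}\in k^\times$. The multiplicativity of $\phi$ yields the identity $c_{zy}=c_{zw}c_{wy}$ whenever $z\le w\le y$. The remaining goal is to find scalars $\lambda_y\in k^\times$ satisfying $c_{zy}=\lambda_y\lambda_z^{-1}$ for every covering pair $z<y$; indeed, for such $\lambda_y$, conjugation by $u:=\sum_{y\in P}\lambda_ye_y\in k[P]^\times$ then coincides with $\phi$, showing that $\phi$ is inner.

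Here the given element $x\in P$ becomes essential: I would set $\lambda_x:=1$, $\lambda_y:=c_{xy}$ for $y>x$, and $\lambda_y:=c_{yx}^{-1}$ for $y<x$ (which is well-defined because $x$ is comparable with every element). A short case analysis on a covering pair $z<y$ -- splitting into $x\le z<y$, $z<y\le x$, or $x\in\{z,y\}$, while noting that $z<x<y$ cannot occur for a cover -- combined with the multiplicativity identity applied through $x$, verifies $c_{zy}=\lambda_y\lambda_z^{-1}$ in each case. I expect this step to be the crux of the argument: without a globally comparable element one faces a genuine cohomological obstruction carried by cycles in the undirected Hasse diagram, and the hypothesis on $x$ is precisely what collapses the diagram into a ``star'' around $x$ so that every such obstruction vanishes. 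The ``in particular'' assertion then follows immediately: the map $\Aut(P)\to\Out_k(k[P])$ sending $\sigma$ to the class of the induced algebra automorphism is surjective by the above, and $\Aut(P)$ is finite since $P$ is finite, hence so is $\Out_k(k[P])$.
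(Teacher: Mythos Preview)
The paper does not supply its own proof of this proposition; it simply quotes the result from \cite[Corollary 7.3.7]{SO}. Your argument is correct and is essentially the classical one: reduce to an idempotent-fixing automorphism via conjugation, observe that such an automorphism is determined by a multiplicative system of scalars $c_{zy}\in k^\times$ for $z\le y$, and use the globally comparable element $x$ to exhibit this system as a coboundary $\lambda_y\lambda_z^{-1}$. One small remark: you verify $c_{zy}=\lambda_y\lambda_z^{-1}$ only for covering pairs, relying implicitly on the fact that covers generate $k[P]$ as an algebra, so that two algebra automorphisms fixing the $e_y$ and agreeing on arrows must agree everywhere. This is fine, but in fact the same definition of the $\lambda_y$ gives $c_{zy}=\lambda_y\lambda_z^{-1}$ directly for \emph{all} pairs $z\le y$ (the extra case $z<x<y$, excluded for covers, is handled by $c_{zy}=c_{zx}c_{xy}=\lambda_z^{-1}\lambda_y$), so one can bypass the generation argument entirely.
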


The following is immediate from Corollary~\ref{finite out} and Proposition~\ref{outerautoposet}.

\begin{theorem}\label{periodicity conjecture for poset}
Let $P$ be a poset containing an element $x \in P$ that is comparable to any other element in $P$. Then the periodicity conjecture is true for the trivial extension algebra $T(k[P])$.
\end{theorem}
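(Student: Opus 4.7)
The plan is to derive this theorem as a direct consequence of Corollary~\ref{finite out}\eqref{finite out2} combined with Proposition~\ref{outerautoposet}. The content of the periodicity conjecture for $T(k[P])$ is the implication ``twisted periodic $\Rightarrow$ periodic'', and Corollary~\ref{finite out}\eqref{finite out2} reduces this implication to the finiteness of the outer automorphism group of the base algebra $A = k[P]$.

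First, I would check the separability hypothesis needed to apply Corollary~\ref{finite out}. Since the Hasse quiver $H_P$ of the finite poset $P$ is a finite acyclic quiver, the incidence algebra $k[P]$ is a basic finite-dimensional $k$-algebra, and its radical is the ideal generated by the arrows of $H_P$. Thus $k[P]/\rad k[P] \simeq k^{\,|P|}$, which is a product of copies of $k$ and hence a separable $k$-algebra. Next, Proposition~\ref{outerautoposet} applies under our hypothesis on $P$, and yields that $\Out_k(k[P])$ is finite. With both hypotheses verified, Corollary~\ref{finite out}\eqref{finite out2} immediately gives that $T(k[P])$ is periodic if and only if it is twisted periodic, which is exactly the periodicity conjecture (Question~\ref{conj:periodicity}) applied to $T(k[P])$. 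Since the whole argument is a direct combination of two previously established results, there is no genuine obstacle; the work has all been done in establishing Corollary~\ref{finite out} and Proposition~\ref{outerautoposet}.
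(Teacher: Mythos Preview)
Your proof is correct and follows the same approach as the paper, which states that the result is immediate from Corollary~\ref{finite out} and Proposition~\ref{outerautoposet}. Your explicit verification of the separability hypothesis $k[P]/\rad k[P]\simeq k^{|P|}$ is a welcome addition that the paper leaves implicit.
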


This applies in particular to any bounded poset and thus to any lattice.

It is a routine exercise to calculate the explicit quiver with relations for the trivial extension of any $k[P]$, for example using \cite[Corollary 3.12]{FP}.
We sketch here a proof for the special case where the poset is bounded.

\begin{proposition} \label{quiverrelationsincidence}
  Let $P$ be a finite poset with distinct global maximum $\underline{1}$ and global minimum $\underline{0}$, and $I\lhd kH_P$ such that $k[P]\simeq kH_P/I$.
  \begin{enumerate}[\rm(a)]
  \item
    Let $H'_P$ be the quiver obtained from $H_P$ by adjoining a single arrow $(\underline{1} \xrightarrow{w} \underline{0})$.
  \item For any two vertices $a,b\in P$, denote by $p^a_b$ the unique (modulo $I$) non-trivial path from $a$ to $b$ in $H'_P$. Let $R\lhd k H_P'$ be the ideal generated by $I$ together with
    \[
    \left\{\alpha p_l^l,\:p_l^l\alpha \:\big{|}\: l\in P, \,\alpha\in(H_P')_1\right\} \:\cup\:
    \left\{p_{\underline{1}}^{a} w p_b^{\underline{0}} \:\big{|}\: a \not\le b \mbox{\rm{} and } b\not\le a\right\} .
    \]
  \end{enumerate}
  Then $T(k[P])\simeq kH_P'/R$.
\end{proposition}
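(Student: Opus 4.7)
I would prove the isomorphism by constructing a surjective $k$-algebra homomorphism $\phi \colon kH_P'/R \to T(k[P])$ and using a dimension count to show it is injective.

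To construct $\phi$, send each idempotent $e_l$ to itself in $k[P] \subseteq T(k[P])$, each Hasse arrow to the corresponding path generator in $k[P]$, and the extra arrow $w$ to $p^{\underline{0}*}_{\underline{1}} \in D(k[P])$, where $\{p^{c*}_d\}_{c\le d}$ denotes the basis of $D(k[P])$ dual to the path basis $\{p^c_d\}_{c \le d}$ of $k[P]$. Using the bimodule action on $D(k[P])$ given by $(x\cdot f)(y) = f(yx)$ and $(f\cdot x)(y) = f(xy)$, a direct computation yields the multiplication rules
\begin{align*}
p^a_b \cdot p^{c*}_d &= \begin{cases} p^{c*}_a & \text{if } d = b \text{ and } c \le a, \\ 0 & \text{otherwise,}\end{cases} \\
p^{c*}_d \cdot p^a_b &= \begin{cases} p^{b*}_d & \text{if } a = c \text{ and } b \le d, \\ 0 & \text{otherwise.}\end{cases}
\end{align*}
In particular, the cyclic path $p^l_l = p^l_{\underline{1}} \cdot w \cdot p^{\underline{0}}_l$ maps to the socle element $p^{l*}_l \in D(k[P])$, so $\alpha \cdot p^l_l = 0$ and $p^l_l \cdot \alpha = 0$ hold in $T(k[P])$ for every arrow $\alpha$. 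Similarly, $\phi(p^a_{\underline{1}} w p^{\underline{0}}_b) = p^{b*}_a$ when $b \le a$ and is zero otherwise, so the remaining relations of $R$ are satisfied. Since the relations $I$ hold in $k[P]$, the map $\phi$ is well-defined. Surjectivity is then immediate: the image contains $k[P]$ and, via the products above, every $p^{b*}_a$ with $b \le a$; the latter span $D(k[P])$.

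For the dimension count, I first extract two families of derived vanishing relations. Any path in $H_P'$ using $w$ at least twice contains, modulo $I$, the subword $w \cdot p^{\underline{0}}_{\underline{1}} \cdot w$; since $p^{\underline{0}}_{\underline{1}} \cdot w$ equals the cyclic path $p^{\underline{0}}_{\underline{0}}$ in $kH_P'/R$, this subword vanishes by the socle relation $w \cdot p^{\underline{0}}_{\underline{0}} = 0$. Moreover, for $a < b$, writing $p^a_b = p^a_{b^-} \cdot \alpha$ with $\alpha$ the last arrow of the path gives $p^a_{\underline{1}} w p^{\underline{0}}_b = p^a_b \cdot p^b_b = p^a_{b^-} \cdot (\alpha \cdot p^b_b) = 0$ in $kH_P'/R$. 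Consequently, $kH_P'/R$ is spanned by the $|P|$ idempotents $e_l$, the $|\{a<b\}|$ non-trivial paths $p^a_b$ in $H_P$, and the $|\{b \le a\}|$ classes $p^a_{\underline{1}} w p^{\underline{0}}_b$ with $b \le a$. Their total count equals $2|\{(a,b) : a \le b\}| = \dim_k T(k[P])$, so $\phi$ is an isomorphism. The main technical content lies in deriving the hidden vanishing relations from the stated ones and in the product formulas for $D(k[P])$; once these are in place, the argument reduces to a routine count.
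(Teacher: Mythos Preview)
Your argument is correct. The explicit homomorphism is well-defined, the derived relations (paths with two copies of $w$ vanish; $p^a_{\underline 1}wp^{\underline 0}_b=0$ for $a<b$) are valid consequences of $R$, and the spanning set count matches $2\dim_k k[P]=\dim_k T(k[P])$, so surjectivity plus the dimension bound gives the isomorphism. One small point of notation: you silently use $p^{\underline 1}_{\underline 1}=e_{\underline 1}$ and $p^{\underline 0}_{\underline 0}=e_{\underline 0}$ when writing the cycle $p^l_l=p^l_{\underline 1}\,w\,p^{\underline 0}_l$ at the extreme elements; this is harmless but worth stating.

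The paper takes a different and much shorter route: it simply invokes the general presentation theorem for trivial extensions due to Fern\'andez--Platzeck \cite[2.2, 2.4, Cor.~3.12]{FP}. Since $k[P]$ has a unique maximal path (from $\underline 0$ to $\underline 1$), their results immediately give the quiver $H_P'$ with one added arrow and the listed relations. Your approach has the advantage of being self-contained and transparent in this special case, computing the $D(k[P])$-bimodule structure explicitly; the paper's approach is more efficient but depends on the cited machinery.
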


\begin{proof}
Since there is a unique maximal path in $k[P]$, namely the one from $\underline{0}$ to $\underline{1}$, it follows from \cite[2.2, 2.4]{FP} that we only have to add one arrow $(\underline{1} \xrightarrow{w} \underline{0})$.
The relations are readily derived by applying \cite[Cor 3.12]{FP}.
\end{proof}

We will now look at several concrete examples obtained with the help of \cite{QPA} and \cite{SAGE}.
In the rest of this section, we will assume that the field has characteristic 0. We will restrict our attention to incidence algebras of distributive lattices, as these contain many important examples and have nicer homological properties compared with general posets -- see for example \cite{IM}.

Let us start with an example of a well studied poset whose trivial extension turns out to be periodic.
The study of free distributive lattices goes back to Dedekind \cite{De}, who studied related problems and posed the -- still open -- problem of finding an explicit formula for the number of elements of the free distributive lattice on $n$ generators.

\begin{example}\label{eg:free dist. lat}
Let $L$ be the free distributive lattice on 3 generators, that is, the distributive lattice of order ideals of the Boolean lattice of a 3-set. 
The Hasse quiver $H_L$ is of the form:
\[
  {\scriptsize \xymatrix@C=30pt@R=10pt{
&&&&\circ\ar[dr]\\
& & \circ\ar[r]\ar[rd] & \circ\ar[ru]\ar[rd] &  & \circ\ar[r]\ar[rd] & \circ\ar[rd] & \\
\circ\ar[r]&\circ\ar[ru]\ar[r]\ar[rd] & \circ\ar[ru]\ar[rd] & \circ\ar[r]\ar[rd] & \circ\ar[ru]\ar[r]\ar[rd]& \circ\ar[ru]\ar[rd] & \circ\ar[r] & \circ\ar[r]&\circ \\
& & \circ\ar[ru]\ar[r] & \circ\ar[rd]\ar[ru] & \circ\ar[ru] & \circ\ar[r]\ar[ru] & \circ\ar[ru] & & \\
& & & & \circ\ar[ru] & & & &
}}\]
Thus, the trivial extension algebra $T(k[L])$ is given by the following quiver, with relations as explained in Proposition~\ref{quiverrelationsincidence}.
\[
{\scriptsize\xymatrix@C=30pt@R=10pt{
&&&&\circ\ar[dr]\\
& & \circ\ar[r]\ar[rd] & \circ\ar[ru]\ar[rd] &  & \circ\ar[r]\ar[rd] & \circ\ar[rd] & \\
\circ\ar[r]&\circ\ar[ru]\ar[r]\ar[rd] & \circ\ar[ru]\ar[rd] & \circ\ar[r]\ar[rd] & \circ\ar[ru]\ar[r]\ar[rd]& \circ\ar[ru]\ar[rd] & \circ\ar[r] & \circ\ar[r]&\circ\ar@/_4pc/[llllllll] \\
& & \circ\ar[ru]\ar[r] & \circ\ar[rd]\ar[ru] & \circ\ar[ru] & \circ\ar[r]\ar[ru] & \circ\ar[ru] & & \\
& & & & \circ\ar[ru] & & & &
}}\]
Using \cite{QPA}, we have verified that every that every simple module $S$ satisfies $\Omega^{14}_{T(k[L])}(S)\simeq S$; thus, $T(k[L])$ is twisted periodic. 
Theorem~\ref{periodicity conjecture for poset} now implies that $T(k[L])$ is periodic and hence, by Proposition~\ref{periodic to fCY}, the incidence algebra $k[L]$ is fractionally Calabi--Yau.
\end{example}

We demonstrate our methods by sketching a classification of all fractionally Calabi--Yau incidence algebras $k[L]$ for distributive lattices $L$ of size $11$, which gives new examples of fractionally Calabi--Yau algebras.
As in Example~\ref{eg:free dist. lat}, the example to follow was obtained using the GAP-package \cite{QPA}.

\begin{example}\label{11 points}
There are 82 distributive lattices of size 11, see \cite{oeis}.

It turns out that only 19 of these 82 distributive lattices have periodic Coxeter matrix, which is a necessary condition for an algebra of finite global dimension to be fractionally Calabi--Yau. For this calculation, we used \cite[Theorem 2.7]{KP}, which gives an upper bound of the period of $n \times n$ integer matrices.

For 15 of those 19 lattices $L$, computer calculations show that all simple modules of $T(k[L])$ are periodic. From Theorem~\ref{periodicity conjecture for poset}, it thus follows that these algebras are periodic and, consequently, that the incidence algebras $k[L]$ are fractionally Calabi--Yau.
For the trivial extensional algebras of the remaining four lattices, one can show that the dimensions of the syzygies of a given simple module go to infinity, hence these algebras are not periodic.

For explicit calculations and a list of all 15 incidence algebras that are fractionally Calabi--Yau, we refer to the forthcoming work \cite{M}, where a detailed classification of fractionally Calabi--Yau incidence algebras of posets of small cardinalities will be given.  Here, we just give two examples, discovered by the computer, of genuinely new fractionally Calabi--Yau algebras.
The first example is the distributive lattice $L$ with Hasse quiver:
\[
\scriptsize{\xymatrix@C=30pt@R=10pt{
 &  & \circ \ar[r]\ar[rd] & \circ\ar[rd] &  &  \\
\circ\ar[r] & \circ\ar[ru]\ar[r]\ar[rd] & \circ\ar[ru]\ar[rd] & \circ\ar[r] & \circ\ar[r] & \circ \\
 & & \circ\ar[ru]\ar[r] & \circ\ar[ru]\ar[r] & \circ\ar[ru] &  
}}
\]
In this example all simple modules $S$ of the trivial extension $T(k[L])$ satisfy $\Omega^{38}(S) \simeq S$ and the Coxeter polynomial of $k[L]$ is equal to $ x^{11}+x^{10}+x^9+x^2+x+1$.

The second example has the following Hasse quiver:
\[
{\scriptsize
\xymatrix@C=30pt@R=2pt{
 & & \circ\ar[rd] & & & \\
& \circ \ar[ru]\ar[rd]& &\circ\ar[rd] & & & \\
\circ \ar[ru]\ar[rd]& & \circ\ar[ru]\ar[rd] & & \circ\ar[rd] &  \\
& \circ\ar[ru]\ar[rd]& &\circ\ar[ru]\ar[rd] &  & \circ\\
 & & \circ\ar[ru] & & \circ\ar[ru] &
}}\]
All simple modules $S$ of the trivial extension $T(k[L])$ satisfy $\Omega^{31}(S) \simeq S$, and $k[L]$ has Coxeter polynomial $x^{11}+x^{10}-x^6-x^5+x+1$.

We remark that the incidence algebras of the two lattices above are not derived equivalent to Dynkin algebras. To see this, we compare their Coxeter polynomials (which is a derived invariant) against those of Dynkin type $A_{11}$ and $D_{11}$ to notice that they truly lie outside the derived equivalence class of Dynkin type path algebras.
\end{example}


\section{Appendix: Calabi--Yau dimension of Dynkin quivers}\label{sec:DynkinCY}

One fundamental class of fractionally Calabi--Yau algebras is given by the path algebras of Dynkin quivers.
The following result, which is a stronger version of \cite[0.3]{MY}, gives the minimal Calabi--Yau dimensions of these algebras.

\begin{theorem}\label{Dynkin is CY}
Let $Q$ be a Dynkin quiver, and $h$ the Coxeter number of the corresponding Dynkin type. Then
\[
\CYdim kQ=\begin{cases}
(\frac{h}{2}-1,\frac{h}{2}), & \text{if $Q$ is of type $A_1$, $D_n$ with $n$ even, $E_7$ or $E_8$;}\\
(h-2,h), & \text{otherwise.}
\end{cases}
\]
\end{theorem}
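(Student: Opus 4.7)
The plan is to start from the known isomorphism $\nu^h \simeq [h-2]$ on $\Db(\mod kQ)$ (due to \cite{MY}), which shows that $kQ$ is $\frac{h-2}{h}$-Calabi--Yau, and then to determine the minimal denominator $\ell$ such that $\nu^\ell$ is a shift functor. If $\nu^\ell \simeq [m]$ for the minimal $\ell$, a Bezout argument together with $\nu^h \simeq [h-2]$ forces $\ell \mid h$; extracting $m = (h-2)\ell/h$ from $(\nu^\ell)^{h/\ell} \simeq [h-2]$ then shows that integrality of $m$ requires $h/\gcd(h,2) \mid \ell$. Together these give $\ell = h$ when $h$ is odd, and $\ell \in \{h/2,h\}$ when $h$ is even. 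Since $h$ is odd precisely for $A_n$ with $n$ even, the claim $\CYdim kQ = (h-2,h)$ follows immediately in that subcase.

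For even $h$, the question reduces to whether $\nu^{h/2} \simeq [h/2-1]$. I first test this on $K_0(\Db(\mod kQ)) \simeq \Z^{|Q_0|}$, where $[1]$ acts as $-\mathrm{id}$ and $\nu$ acts as $-\Phi$ for the Coxeter matrix $\Phi$ of $kQ$. Since $\Phi$ has order exactly $h$ with eigenvalues $e^{2\pi i m_j/h}$, where the $m_j$ are the exponents of the Weyl group $W$ of $Q$, the induced actions of $\nu^{h/2}$ and $[h/2-1]$ agree on $K_0$ if and only if $\Phi^{h/2} = -I$, i.e.\ every exponent $m_j$ is odd, equivalently, the longest element $w_0 \in W$ equals $-1$. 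The standard tables of exponents show that this holds exactly for $Q \in \{A_1, D_{2n}, E_7, E_8\}$. Hence, for even $h$ with $Q$ outside this list, the $K_0$-actions already disagree, $\nu^{h/2} \not\simeq [h/2-1]$, and $\CYdim kQ = (h-2,h)$.

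The remaining task -- and the main obstacle -- is to upgrade the $K_0$-level equality to an isomorphism of triangle functors $\nu^{h/2} \simeq [h/2-1]$ when $Q \in \{A_1, D_{2n}, E_7, E_8\}$. By Happel's description of the AR quiver of $\Db(\mod kQ)$ as $\Z Q$, every indecomposable object takes the form $M[k]$ for a unique indecomposable $kQ$-module $M$ and $k\in\Z$, so it suffices to verify $\nu^{h/2}(M) \simeq M[h/2-1]$ for each indecomposable $M$. Writing $\nu^{h/2}(M) \simeq N[j]$, comparison in $K_0$ gives $(-1)^j[N] = (-1)^{h/2-1}[M]$; positivity of dimension vectors yields $[N] = [M]$ with $j \equiv h/2-1 \pmod 2$, and Gabriel's theorem then forces $N \simeq M$. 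Applying $\nu^{h/2}$ a second time and invoking $\nu^h \simeq [h-2]$ gives $M[2j] \simeq M[h-2]$, so $j = h/2-1$. What remains is to assemble these pointwise isomorphisms into a natural isomorphism of functors; this step requires the rigidity of $\Db(\mod kQ)$ for Dynkin $Q$ (essentially that $\Aut(\Z Q)/\langle \tau \rangle$ is the graph automorphism group of $Q$, which is trivial in the four relevant cases), and is where the most care is needed.
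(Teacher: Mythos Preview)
Your divisibility argument for the minimal $\ell$ and your $K_0$-level analysis via the Coxeter transformation are correct and essentially match the paper's Lemma~\ref{h and h/2} and the minimality argument at the end of the paper's proof. The pointwise computation showing $\nu^{h/2}(M)\simeq M[h/2-1]$ for each indecomposable $M$ is also fine.

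The genuine gap is in your final ``upgrade'' step, and it contains a factual error: the underlying graph of $D_{2n}$ has a \emph{nontrivial} automorphism group ($\Z/2\Z$ for $n\ge3$, and $S_3$ for $D_4$), so your claim that the graph automorphism group is trivial in all four relevant cases fails precisely where you need it. The argument is salvageable---a nontrivial graph automorphism of $Q$ permutes some simple $kQ$-modules nontrivially, hence cannot fix every indecomposable object, so an autoequivalence fixing all objects is still forced to be the identity---but you would have to make this explicit, and you would also need to invoke the full description of the derived Picard group from \cite{MY} rather than just the structure of $\Aut(\Z Q)$.

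The paper sidesteps this entirely. Instead of classifying autoequivalences, it uses Proposition~\ref{characterise CY}\eqref{tfcy} to pass from the one-sided isomorphism $\nu^\ell(kQ)\simeq kQ[m]$ in $\Db(\mod kQ)$ to a bimodule isomorphism $(DkQ)^{\Lotimes\ell}\simeq{}_\phi(kQ)_1[m]$ for some $\phi\in\Aut_k(kQ)$. One may replace $\phi$ by an automorphism $\psi$ in the same outer class that permutes the $e_i$; then $e_ikQ[m]\simeq\nu^\ell(e_ikQ)\simeq\psi(e_i)kQ[m]$ forces $\psi(e_i)=e_i$ for all $i$, and since $Q$ is a tree this makes $\psi$ inner. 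This argument is uniform across all Dynkin types, does not rely on the classification of derived autoequivalences, and handles $D_{2n}$ without special pleading.
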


We give a simple direct proof based on elementary facts on quiver representations. Unlike \cite[0.3]{MY}, we do not need any explicit case-by-case calculations.

\begin{lemma}\label{h and h/2}
Let $X\in\Db(\mod kQ)$. Then $\tau^{-h}(X)\simeq X[2]$. If $Q$ is of type $A_1$, $D_n$ with $n$ even, $E_7$, or $E_8$, then $\tau^{-h/2}(X)\simeq X[1]$.
\end{lemma}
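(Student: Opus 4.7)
My plan is to translate everything through the Auslander--Reiten formula $\tau \simeq \nu[-1]$, where $\nu$ is the Serre functor on $\Db(\mod kQ)$, and then analyse the $\tau^{-1}$-orbits of the indecomposable projectives. Under this translation, the first assertion $\tau^{-h}(X)\simeq X[2]$ for every $X$ amounts to showing that the triangle autoequivalence $G := \nu^h \circ [2-h]$ is isomorphic to the identity, and the second assertion amounts to $G' := \nu^{h/2}\circ [1-h/2] \simeq \mathrm{id}$. I would first reduce $G \simeq \mathrm{id}$ to the pointwise statement $G(P_i) \simeq P_i$ for every indecomposable projective $P_i$, as follows. Since $\Db(\mod kQ) \simeq \Kb(\proj kQ)$, any triangle autoequivalence is of the form $-\Lotimes_{kQ} U$ for an invertible bimodule complex $U$; the hypothesis $G(P_i)\simeq P_i$ forces $U \simeq {}_\phi(kQ)_1$ for some $\phi\in\Aut_k(kQ)$ that, after modifying by an inner automorphism permuting idempotents, fixes every primitive idempotent. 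Because the underlying graph of a Dynkin quiver is a tree, any such $\phi$ is a diagonal rescaling of arrows and can be realised as conjugation by a diagonal unit of $kQ$, hence is inner; thus $G \simeq \mathrm{id}$.

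To verify $G(P_i)\simeq P_i$, I would exploit finite representation type to track the $\tau^{-1}$-orbit of $P_i$ inside $\mod kQ$. Since the orbit is finite, it terminates at an injective module $I_{\sigma(i)}$ after some $l_i+1$ steps, where $\sigma$ is the Nakayama permutation (induced on simple roots by $-w_0$), and the next application gives $\tau^{-(l_i+1)}(P_i) \simeq \nu^{-1}\nu(P_{\sigma(i)})[1] \simeq P_{\sigma(i)}[1]$. Iterating along the $\sigma$-orbit of $i$, of length $m_i\in\{1,2\}$, produces
\[
\tau^{-L_i}(P_i) \simeq P_i[m_i], \qquad L_i := \sum_{k=0}^{m_i-1}(l_{\sigma^k(i)}+1).
\]
The desired identity $\tau^{-h}(P_i) \simeq P_i[2]$ then follows by iterating $2/m_i$ times, provided one establishes the key combinatorial identity $L_i = m_i h/2$. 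For the second assertion, $\sigma = \mathrm{id}$ holds precisely when $-w_0 = 1$ on simple roots, that is, for types $A_1$, $D_n$ with $n$ even, $E_7$ and $E_8$; in these cases $m_i=1$ and $L_i = h/2$, so $\tau^{-h/2}(P_i)\simeq P_i[1]$, and the same reduction gives $G'\simeq\mathrm{id}$.

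The main obstacle is the counting identity $L_i = m_i h/2$ for each individual Nakayama orbit. A global count only gives $\sum_i(l_i+1)=nh/2 = \sum_i m_i h/2$, which yields the identity on average but not orbit-by-orbit. A uniform justification goes through the action of the Coxeter element $c$ on the root lattice: the Gabriel correspondence identifies the indecomposables in $\mod kQ$ with the positive roots, Coxeter's theorem gives $c^h=1$ with all root orbits of length $h$, and one checks that the Nakayama permutation exactly accounts for the pairing of positive and negative roots under $c^{h/2}$ (when $-1\in W$) so that each Nakayama orbit contributes exactly $m_i h/2$ positive roots. In practice, a short case-by-case check across the five Dynkin types using the explicit AR quiver of $\mod kQ$ is the most direct route.
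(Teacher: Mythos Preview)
Your approach is sound in principle but takes a different and more laborious route than the paper, and the obstacle you flag is exactly what the paper's argument sidesteps.

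The paper does not track individual $\tau^{-1}$-orbits. It works object-wise (the lemma only asserts $\tau^{-h}(X)\simeq X[2]$ for each $X$, not a functorial isomorphism): since $\tau$ acts on $K_0(\mod kQ)$ as the Coxeter transformation $c$ and $c^h=1$, one has $[\tau^{-h}(X)]=[X]$ for every indecomposable $X$; Gabriel's theorem then forces $\tau^{-h}(X)\simeq X[2a_X]$ for some integer $a_X$, and hereditariness together with the absence of $\tau$-periodic modules gives $a_X\ge 1$. Now a single global count finishes: there are exactly $n=|Q_0|$ distinct $\tau$-orbits of indecomposables in $\Db(\mod kQ)$ (each contains a unique $P_i$), and each meets the window of indecomposables with cohomology in degrees $\{0,-1\}$ in at most $h$ objects, because $a_{P_i}\ge 1$ means $\tau^{-h}(P_i)$ has already left the window. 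But the window contains exactly $nh$ indecomposables, so every orbit contributes exactly $h$, forcing $a_X=1$. The second assertion is handled the same way from $c^{h/2}=-1$.

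Your per-orbit identity $L_i=m_ih/2$ is precisely the refinement of the global equality $\sum_i(l_i+1)=nh/2$ that the paper never needs: it replaces your missing equality by the easy inequality ``each orbit contributes $\le h$'' and lets the known total force equality everywhere. Your suggested uniform fix via Kostant's theorem that every $c$-orbit on the root system has length $h$ would work, but that is a deeper input than the paper uses (only $c^h=1$ and the root count). Finally, note that you are proving more than required: the functorial upgrade $G\simeq\mathrm{id}$ via the tree-automorphism argument is correct (and the paper uses essentially that argument later, in the proof of Theorem~\ref{Dynkin is CY}), but it is not needed for the lemma itself.
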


Although this can be deduced from the shape of the Auslander--Reiten quiver of $\Db(\mod kQ)$, we shall give a direct proof. Recall that $h$ is the order of the Coxeter transformation $c$. Moreover, $c^{h/2}=-1$ holds if and only if $Q$ is of type $A_1$, $D_n$ with $n$ even, $E_7$, or $E_8$.
The number of roots in the corresponding root system is $hn$, where $n=|Q_0|$.

\begin{proof}
Since $kQ$ is hereditary, each indecomposable object in $\Db(\mod kQ)$ is a shift of an indecomposable $kQ$-module. Thus we can assume that $X$ is an indecomposable $kQ$-module.

To prove the first part, recall that the action of $\tau$ on the Grothendieck group $K_0(\mod kQ)$ is given by the Coxeter transformation $c$.
Thus $[\tau^{-h}(X)] = [X]$ holds in $K_0(\mod kQ)$. By Gabriel's theorem \cite{Gab}, indecomposable $kQ$-modules are determined by their classes in $K_0(\mod kQ)$. Therefore, $\tau^{-h}(X)\simeq X[2a]$ holds for some $a\in\Z$. Here $a\ge0$, since $\gldim kQ\le 1$ implies $\tau^{-1}(\DDD^{\le0}(\mod kQ))\subset\DDD^{\le0}(\mod kQ)$, where $\DDD^{\le 0}(\mod kQ)$ is the aisle of the canonical $t$-structure of $\Db(\mod kQ)$.  Moreover, $a=0$ is not possible, since there are no periodic $\tau$-orbits. Since there are precisely $n=|Q_0|$ $\tau$-orbits, and $hn$ indecomposable objects in $\Db(\mod kQ)$ with non-trivial cohomology in degree $-1$ or $0$, a counting argument implies that $a\ge2$ is also impossible. Thus, $a=1$, that is, $\tau^{-h}(X)\simeq X[2]$.

We now prove the second claim.
Since $c^{h/2}=-1$ holds in this case, by looking at the class in $K_0(\mod kQ)$, we obtain $\tau^{h/2}(X) = X[2b+1]$ for some $b\in\Z$ which, together with the previous result, implies that $\tau^{-h/2}(X)\simeq X[1]$.
\end{proof}

\begin{proof}[Proof of Theorem \ref{Dynkin is CY}]
Let $(m,\ell):=(\frac{h}{2}-1,\frac{h}{2})$ when $Q$ is of type $A_1$, $D_n$ with even $n$ or $E_7, E_8$, and $(m,\ell):=(h-2,h)$ else.

By Lemma \ref{h and h/2}, for each $i\in Q_0$, we have $\tau^{-\ell}(e_ikQ)\simeq e_ikQ[\ell-m]$ or, equivalently, $\nu^{\ell}(e_ikQ)\simeq e_ikQ[m]$. Thus $\nu^{\ell}(kQ)\simeq kQ[m]$ and, by Proposition~\ref{characterise CY}(a), there exists $\phi\in\Aut_k(kQ)$ such that $D(kQ)^{\Lotimes\ell}\simeq {}_\phi(kQ)_1[m]$ in $\Db(\mod(kQ)^{\mathrm{e}})$. As explained in Section~\ref{section 1.1}, there exists an automorphism $\psi\in\Aut_k(kQ)$ that acts on $\{e_i\mid i\in Q_0\}$ and coincides with $\phi$ in $\Out_k(A)$. Then
\[e_ikQ[m]\simeq\nu^{\ell}(e_ikQ)=e_iD(kQ)^{\Lotimes\ell}\simeq\psi(e_i)kQ[m]\]
in $\Db(\mod kQ)$, and hence $\psi(e_i)=e_i$ for all $i\in Q_0$.
Since $Q$ is a tree, this implies that $\psi$ is an inner automorphism.
Thus $D(kQ)^{\Lotimes\ell}\simeq kQ[m]$ in $\Db(\mod(kQ)^{\mathrm{e}})$, whence $kQ$ is $\frac{m}{\ell}$-Calabi--Yau by Proposition~\ref{characterise CY}.

It remains to show that, if $kQ$ is $\frac{b}{a}$-Calabi--Yau for $a\ge1$, then $a$ is a multiple of $\ell$. Since $\tau^{a}\simeq[b-a]$ as functors on $\Db(\mod kQ)$, the action of $\tau^a = c^a$ on $K_0(\mod kQ)$ is $(-1)^{b-a}$. Thus $a$ is multiple of $\ell$, as desired.
\end{proof}

\section{Appendix 2: Testing fractionally Calabi-Yau with QPA}\label{sec:QPA}
In this section, we will very briefly explain how to use \cite{QPA} to verify example \ref{eg:free dist. lat} and example \ref{11 points}.
First we look at example \ref{eg:free dist. lat}. To display the Hasse diagram of the free distributive lattice on 3 symbols, you can use \cite{SAGE} and enter the following:
\begin{verbatim}
P=posets.BooleanLattice(3)
L=P.order_ideals_lattice().relabel()
plot(L)
\end{verbatim}
One can then translate the Hasse quiver into input for QPA to obtain the incidence algebra $A$.
This looks as follows:
\begin{tiny}
\begin{verbatim}
L0:=["x0", "x1", "x2", "x3", "x6", "x8", "x4", "x5", "x11", "x7", "x9",
 "x10", "x12", "x17", "x13", "x14", "x15", "x16", "x18", "x19"];
L1:=[["x0", "x1", "x0_x1"], ["x1", "x2", "x1_x2"], ["x1", "x3", "x1_x3"],
 ["x1", "x4", "x1_x4"], ["x2", "x6", "x2_x6"], ["x2", "x5", "x2_x5"], ["x3",
 "x6", "x3_x6"], ["x3", "x7", "x3_x7"], ["x6", "x8", "x6_x8"], ["x6", "x9",
 "x6_x9"], ["x8", "x10", "x8_x10"], ["x4", "x5", "x4_x5"], ["x4", "x7",
  "x4_x7"], ["x5", "x11", "x5_x11"], ["x5", "x9", "x5_x9"], ["x11", "x12",
   "x11_x12"], ["x7", "x9", "x7_x9"], ["x7", "x13", "x7_x13"], ["x9", "x10",
  "x9_x10"], ["x9", "x12", "x9_x12"], ["x9", "x14", "x9_x14"], ["x10",
   "x16", "x10_x16"], ["x10", "x17", "x10_x17"], ["x12", "x15", "x12_x15"],
   ["x12", "x17", "x12_x17"], ["x17", "x18", "x17_x18"], ["x13", "x14",
    "x13_x14"], ["x14", "x15", "x14_x15"], ["x14", "x16", "x14_x16"],
     ["x15", "x18", "x15_x18"], ["x16", "x18", "x16_x18"], ["x18", "x19",
     "x18_x19"]];


Q:=Quiver(L0,L1);KQ:=PathAlgebra(Rationals,Q);AssignGeneratorVariables(KQ);

rel:=[x1_x2*x2_x5 - x1_x4*x4_x5,
 x1_x2*x2_x6 - x1_x3*x3_x6,
 x1_x3*x3_x7 - x1_x4*x4_x7,
 -x2_x6*x6_x9 + x2_x5*x5_x9,
 x3_x6*x6_x9 - x3_x7*x7_x9,
 x6_x8*x8_x10 - x6_x9*x9_x10,
 x4_x5*x5_x9 - x4_x7*x7_x9,
 -x5_x11*x11_x12 + x5_x9*x9_x12,
 x7_x9*x9_x14 - x7_x13*x13_x14,
 x9_x10*x10_x16 - x9_x14*x14_x16,
 x9_x10*x10_x17 - x9_x12*x12_x17,
 x9_x12*x12_x15 - x9_x14*x14_x15,
 -x10_x17*x17_x18 + x10_x16*x16_x18,
 x12_x17*x17_x18 - x12_x15*x15_x18,
 -x14_x15*x15_x18 + x14_x16*x16_x18];

A:=KQ/rel;Dimension(A);
\end{verbatim}
\end{tiny}
The following commands calculate the trivial extension algebra $B$ of $A$ and checks whether the simple $B$-modules have finite period bounded by 20 (via the command IsOmegaPeriodic(i,20)):
\begin{verbatim}
B:=TrivialExtensionOfQuiverAlgebra(A);simB:=SimpleModules(B);
W:=[];for i in simB do Append(W,[IsOmegaPeriodic(i,20)]);od;W;
\end{verbatim}
If the final list $W$ contains only integers, then one has shown that all simple $B$-modules are periodic and thus the algebra $A$ is fractionally Calabi-Yau by our results in section 8. After a long calculation, one sees that the list $W$ only contains twenty times the integer 14 and thus we have shown that $A$ is fractionally Calabi-Yau.
Now we briefly sketch how to see the results in example \ref{11 points}.
First we can use Sage as follows to filter out the distributive lattices on 11 points with Coxeter matrix of finite order:
\begin{verbatim}
n=11
posets = [p.with_bounds() for p in Posets(n-2)]
lattices=[p for p in posets if p.is_lattice()]
distlattices = [p for p in lattices if LatticePoset(p).is_distributive()]
coxeterperiodicdistlattices=[p for p in distlattices if
 p.coxeter_transformation().multiplicative_order() in ZZ]
display(len(distlattices))
display(coxeterperiodicdistlattices)
\end{verbatim}
We indeed get 19 such distributive lattices that we can then enter into QPA as in the previous example and check whether the simple modules for the trivial extension algebra are periodic.
If all simple modules are periodic we can conclude as before that the incidence algebra is fractionally Calabi-Yau.
If the command IsOmegaPeriodic($S,t$) for a simple module $S$ and a larger number $t$ does not give a finite integer after a long calculation this indicates that the simple module $S$ is not periodic. Verifying this can not be done with the computer and is rather complicated. We refer to the forthcoming work \cite{M} for details.

\section*{Acknowledgments} 
Throughout this project, we are deeply indebted to the power of the GAP-package \cite{QPA} and \cite{SAGE}. We thank Joseph Grant for valuable discussions and for explaining to us his results in \cite{Gra}.
Part of this research was started during the Oberwolfach visit of AC and RM in January 2020, where Andrzej Skowro\'{n}ski's interest and enthusiasm in RM's first discovery of periodic wild symmetric algebras has encouraged us to pursue this work.  We are deeply saddened by his passing. 

AC was funded by JSPS Grant-in-Aid for Research Activity Start-up program 19K23401, and ED by JSPS Grant-in-Aid for Scientific Research (C) 18K03238.
OI was funded by JSPS Grant-in-Aid for Scientific Research (B) 16H03923, (C) 18K3209 and (S) 15H05738.
RM was funded by the DFG with the project number 428999796.
RM is thankful to Bernhard B\"ohmler and {\O}yvind Solberg for help with QPA and useful conversations.

\end{document}